\documentclass[12pt]{amsart}
\usepackage{verbatim,amscd,amssymb}
\usepackage{graphicx}
\usepackage{amsmath}
\numberwithin{equation}{subsection}
\usepackage[active]{srcltx}
\usepackage{enumerate}  

\newtheorem{thm}{Theorem}[section]

\newtheorem{lem}[thm]{Lemma}

\newtheorem{cor}[thm]{Corollary}

\newtheorem*{thmA}{Theorem A}
\newtheorem*{thmB}{Theorem B}
\newtheorem*{thmC}{Theorem C}

\theoremstyle{definition}
\newtheorem{dfn}[thm]{Definition}

\newtheorem{rem}[thm]{Remark}

\def\Ac{\mathcal{A}}  
\def\C{\mathbb{C}}   \def\Fc{\mathcal{F}}
\def\Hc{\mathcal{H}}
 \def\al{\alpha}
  \def\Mc{\mathcal{M}}
\def\Pc{\mathcal{P}}    
\newcommand{\lam}{\mathcal{L}}

\def\R{\mathbb{R}}

\def\Dc{\mathcal{D}}

\def\Uc{\mathcal{U}} \def\Vc{\mathcal{V}} \def\Wc{\mathcal{W}}
\def\Xc{\mathcal{X}}

\def\Z{\mathbb{Z}}

\renewcommand\emptyset{\varnothing}
\newcommand{\sm}{\setminus}

\def\eps{\varepsilon}

\def\al{\alpha}
\def\be{\beta}

\def\ta{\theta}
\def\om{\omega}
\def\la{\lambda}
\def\si{\sigma}
\def\vp{\varphi}
\def\ol{\overline}
\def\re{\mathrm{re}}

\def\thu{\mathrm{TH}}

\def\ch{\mathrm{CH}}
\def\cu{\mathrm{CU}}
\def\uc{\mathbb{S}}
\def\bd{\mathrm{Bd}}
\def\le{\leqslant}
\def\ge{\geqslant}
\def\0{\emptyset}
\def\disk{\mathbb{D}}
\def\cdisk{\ol{\mathbb{D}}}
\def\phd{\mathrm{PHD}}

\def\lam{\mathcal L}

\def\cuc{\mathcal{CU}}

\usepackage{xspace}

\usepackage{xcolor}

\begin{document}
\date{May 25, 2019}
\title[Siegel capture polynomials in parameter spaces]
{Location of Siegel capture polynomials \\ in parameter spaces}

\author[A.~Blokh]{Alexander~Blokh}

\author[A.~Cheritat]{Arnaud~Ch\'eritat}

\author[L.~Oversteegen]{Lex~Oversteegen}

\author[V.~Timorin]{Vladlen~Timorin$^\star$}

\address[Alexander~Blokh and Lex~Oversteegen]
{Department of Mathematics\\ University of Alabama at Birmingham\\
Birmingham, AL 35294, USA}

\address[Vladlen~Timorin]
{Faculty of Mathematics\\
HSE University\\
6 Usacheva St., 119048 Moscow, Russia}

\address[Arnaud~Ch\'eritat]{Institut de Math\'ematiques\\Universit\'e Paul
Sabatier\\ 118 route de Narbonne, 31062 Toulouse, France}

\email[Alexander~Blokh]{ablokh@math.uab.edu}
\email[Arnaud~Ch\'eritat]{arnaud.cheritat@math.univ-toulouse.fr}
\email[Lex~Oversteegen]{overstee@uab.edu}
\email[Vladlen~Timorin]{vtimorin@hse.ru}

\subjclass[2010]{Primary 37F45; Secondary 37F10, 37F20, 37F50}

\keywords{Complex dynamics; Julia set;
laminations; Siegel capture polynomial}

\dedicatory{Dedicated to the memory of Anatole Katok}

\begin{abstract}
We study the set of cubic polynomials $f$ with a
marked fixed point. If $f$ has a Siegel disk at the marked fixed point,
and if this disk contains an eventual image of a critical point, we
call $f$ a \emph{IS-capture polynomial} (``IS'' stands for Invariant Siegel). We study the location of
IS-capture polynomials in the parameter space of all marked
cubic polynomials modulo affine conjugacy. In particular, we
show that any IS-capture polynomial is on the boundary of a unique bounded
hyperbolic component determined by the rational lamination of the map.
We also relate IS-capture polynomials to the cubic Principal Hyperbolic Domain and
its closure (by definition, the \emph{cubic Principal Hyperbolic
Domain} consists of cubic hyperbolic polynomials with Jordan curve
Julia sets).
\end{abstract}

\maketitle

\section{Introduction}
A complex polynomial $P$ of any degree is said to be \emph{hyperbolic}
if all of its critical points belong to the basins of
attracting or superattracting periodic cycles. The set of all
hyperbolic polynomials in any particular parameter space is open.
Components of this set are called \emph{hyperbolic components}. The
dynamics of hyperbolic complex polynomials is well understood.
According to the famous Fatou conjecture \cite{fat20}, hyperbolic
polynomials are dense in the parameter space of all complex
polynomials. This explains why hyperbolic components play a prominent
role in complex dynamics.

According to a general result of J. Milnor \cite{mil12}, every bounded hyperbolic component
in the moduli space of degree $d$ polynomials is an open topological cell of complex dimension $d-1$.
Hence it is fair to say that the structure of such hyperbolic domains is known.
However, in degrees greater than $2$, the same cannot be said about the closures of
hyperbolic components.
Arguably in the most important case of the cubic Principal Hyperbolic Domain $\phd_3$, the description of its boundary
has proved to be rather elusive.
For example, in a recent paper by L. Petersen and T. Lei \cite{lp09} it is shown  that the boundary of
$\phd_3$ has a  very intricate  ``fractal'' structure that is not fully understood.
Thus, understanding the boundaries of hyperbolic components in particular, understanding of the boundary of $\phd_3$,
is an important open problem.

Qualitative changes in the dynamics of polynomials take place on the
boundary of the connectedness locus. It is known that boundaries of
hyperbolic components are contained in the boundary of the entire
connectedness locus. This provides an additional incentive for studying
boundaries of hyperbolic components.

In our paper we consider these issues in the cubic case.
More precisely, we consider the parameter space of cubic polynomials with a marked fixed point.
The corresponding connectedness locus contains many complex analytic disks in its boundary.
A typical example is provided by \emph{IS-capture polynomials},  i.e., polynomials that have an
invariant Siegel domain around the marked fixed point and a critical point which is eventually mapped into it.
We prove that an IS-capture polynomial $f$ belongs to the boundary of a
\emph{unique} bounded hyperbolic component with the same \emph{rational lamination} as $f$.
We also prove that $f$ belongs to a complex analytic disk lying in the boundary of this hyperbolic component.
We give a combinatorial description of all hyperbolic components whose boundaries can contain IS-capture polynomials.

In \cite{bopt14} it was proven that all polynomials from $\phd_3$
satisfy some simple conditions. Our standing conjecture is that these
conditions are not only necessary but actually sufficient for a
polynomial to belong to the closure of $\phd_3$. Working in this
direction, we showed in \cite{bopt14b} that bounded components
complementary to the intersection of $\ol{\phd}_3$ with slices of the
parameter space can be either of so-called \emph{queer} type, or must
contain an IS-capture polynomial. The results of the present paper
together with those of \cite{bopt16a} allow one to rule out the case of
IS-capture polynomials.

Finally, we show that if a polynomial $P$ which does not belong to the
closure of $\phd_3$ and has multiplier $\lambda=e^{2\pi i \theta}$ at
its fixed point $w$, where $\theta$ is not a Brjuno number, then $w$ is
a Cremer fixed point of $P$. Thus, a counterexample to the Douady
conjecture in the cubic case \emph{must} be a polynomial from the
boundary of $\phd_3$.

\section{Detailed statement of the results}
\label{s:detailed}
We write $\C$ for the plane of complex numbers. The \emph{Julia set} of
a polynomial $f:\C\to \C$ is denoted by $J(f)$, and the \emph{filled
Julia set} of $f$ by $K(f)$. For quadratic polynomials, a crucial
object of study is the \emph{Mandelbrot set} $\Mc_2$.
Let $P_c(z)$ be a quadratic polynomial defined by the formula $P_c(z)=z^2+c$.
Clearly, $0$ is the only critical point of the polynomial $P_c$ in $\C$.
By definition, $c\in \Mc_2$ if the orbit of $0$ under $P_c$ is bounded.
Equivalently, $c\in \Mc_2$ if and only if the filled Julia set $K(P_c)$ is connected.
If $c\not\in \Mc_2$, then the set $K(P_c)$ is a Cantor set.

By \emph{classes} of polynomials we mean affine conjugacy classes. The
class of $f$ is denoted by $[f]$. Complex numbers $c$ are in one-to-one
correspondence with classes of quadratic polynomials. A higher-degree
analog of the set $\Mc_2$ is the {\em degree $d$ connectedness locus}
$\Mc_d$, i.e., the set of classes of degree $d$ polynomials $f$ all of
whose critical points do not escape or, equivalently, whose Julia set
$J(f)$ is connected.

The structure of the Mandelbrot set is described in the seminal work of
Thurston \cite{thu85} (see also \cite{DH}). In particular, \cite{thu85}
gives a full description of how distinct hyperbolic components of
$\Mc_2$ are located with respect to each other and what kind of
dynamics is exhibited by polynomials from their boundaries. However,
for degrees $d>2$ studying the set $\Mc_d$ has proven to be a difficult
task. The combinatorial structure of $\Mc_d$ remains elusive despite
some recent progress (see \cite{bopt17a} for the general case and
\cite{bopt17b, bopt17c} for the case of cubic polynomials with only
repelling periodic points).

The central and, arguably, the simplest part of the Mandelbrot set is
the \emph{$($quadratic$)$ Principal Hyperbolic Domain} denoted by
$\phd_2$. It is the set of all parameter values $c$ such that the
polynomial $P_c$ has an attracting fixed point. All these polynomials
have Jordan curve Julia sets. The closure $\ol\phd_2$ of $\phd_2$
consists of all parameter values $c$ such that $P_c$ has a
non-repelling fixed point. It is sometimes called the \emph{filled Main
Cardioid}. Its boundary $\bd(\phd_2)$ is a plane algebraic curve, a
cardioid called the \emph{Main Cardioid}. As follows from the
Douady--Hubbard parameter landing theorem and from the ``no ghost
limbs'' theorem by Yoccoz \cite{DH, Hu}, the Mandelbrot set itself can
be thought of as the union of $\ol{\phd}_2$ and \emph{limbs}, connected
components of $\Mc_2\sm\ol{\phd}_2$, parameterized by reduced rational
fractions $p/q\in (0,1)$.

It is natural to consider analogs of the Main Cardioid for higher
degree polynomials, in particular for cubic polynomials. This motivates
our interest to the boundary of the \emph{cubic Principal Hyperbolic
Domain} $\phd_3$ and to a closely related set, the so-called \emph{Main
Cubioid}, that was studied in a few recent papers (\cite{bopt14,
bopt14b, bopt16a, bopt16b}). In this framework an important task is to
describe whether polynomials with certain dynamical properties belong
to the boundary of the Main Cubioid. This is one of the problems
addressed in the present paper.

Let us now concentrate on cubic polynomials.
Let $\Fc$ be the space of polynomials $f_{\lambda,b}$ given by the formula
$$
f_{\lambda,b}(z)=\lambda z+b z^2+z^3,\quad \lambda\in \C,\quad b\in \C.
$$
The space $\Fc$ is adapted to studying polynomials with a marked fixed point.
Any such polynomial is affinely conjugate to one from $\Fc$ under a conjugacy sending the marked fixed point to $0$.
All polynomials $g\in \Fc$ have $0$ as a fixed point.
Let the \emph{$\la$-slice} $\Fc_\lambda$ of $\Fc$ be the space
of all polynomials $g\in\Fc$ with $g'(0)=\lambda$. It is well known
that two polynomials $f_{\la,b}$ and $f_{\la,b'}$ are conjugate by a
M\"obius transformation $M(z)$ that fixes $0$ if and only if $M(z)=\pm
z$ and $b'=\pm b$. We will deal with $f\in\Fc_\lambda$ for some
$\lambda$ and consider only perturbations of $f$ in $\Fc$. Set
$\Fc_{at}=\bigcup_{|\la|<1} \Fc_\la$ (the subscript $at$ stands for
\textbf{at}tracting).\footnote{The set $\Fc_{at}$ was denoted by $\Ac$
in \cite{bopt14b,bopt16b}. We adopt a more consistent notation in this
paper.} Let us emphasize that $\Fc_{at}$ is the family of polynomials
from $\Fc$ that have the point $0$ as an attracting fixed point. For
each $g\in \Fc_{at}$, let $A(g)$ be the immediate basin of attraction
of $0$. Denote by $\Fc_{nr}$ the set of all polynomials
$f=f_{\lambda,b}\in\Fc$ such that $0$ is
\textbf{n}on-\textbf{r}epelling for $f$ (so that $|\la|\le 1$).

Suppose that $a$ is a fixed point of a polynomial $f$ of any degree.
Assume that $f'(a)=e^{2\pi i \theta}$ where $\theta$ is irrational.
Then $a$ is said to be an \emph{irrationally indifferent} fixed point.
If $f$ is \emph{linearizable} (i.e., analytically conjugate to a
rotation) in a neighborhood of $a$, the point $a$ is called a
\emph{Siegel} fixed point. In this case the rotation in question is
well defined and is the rotation by $2\pi\theta$ so that $\theta$ is
called the \emph{rotation number}. Moreover, this is equivalent to the
existence of an \emph{orientation preserving topological} conjugacy
between $f$ in a neighborhood of $a$ and the rotation by $2\pi\theta$
of the unit disk. If $a$ is a Siegel fixed point, the biggest
neighborhood of $a$ on which $f$ is linearizable exists and is called
the \emph{Siegel disk} around $a$. If $f$ is not linearizable in any
neighborhood of $a$ then the point $a$ is called a \emph{Cremer} fixed
point.

\begin{dfn}[Siegel captures]
\label{d:compotypes} Suppose that a polynomial $f\in\Fc$ has a Siegel
disk $\Delta(f)$ around $0$. If a critical point of $f$ is
\emph{eventually} mapped to $\Delta(f)$, then this critical point is
denoted by $\mathrm{ca}(f)$ (here ``$\mathrm{ca}$'' stands for
``captured''), and $f$ is called an \emph{IS-capture polynomial}, or
simply an \emph{IS-capture} (here ``I'' stands for ``invariant''
and ``S'' stands for ``Siegel''). By \cite{man93}, there exists a
recurrent critical point $\mathrm{re}(f)$ of $f$ (here ``re'' stands
for ``recurrent'') whose limit set contains $\bd(\Delta(f))$. It
follows that the critical points $\mathrm{ca}(f)$ and $\mathrm{re}(f)$
are well-defined and distinct (evidently, $\mathrm{ca}(f)$ is not
recurrent).
\end{dfn}

\begin{rem}\label{r:0-siegel}
Generically, maps in the family $\Fc$ have three fixed points. Any of
these points, not only $0$, could have a Siegel disk around it that
captures a critical point. However, let us stress that we only speak of
IS-captures when $0$ is the Siegel fixed point whose Siegel disk captures a critical
point.
\end{rem}

In this paper, we study the location of IS-captures in $\Fc$ relative
to hyperbolic components. An important role here is played by the set
$\Pc^\circ$ of $\Fc$ of all hyperbolic polynomials $f\in \Fc$ such that
$f\in\Fc_{at}$ and $J(f)$ is a Jordan curve. Equivalently,
$f\in\Fc_{at}$ belongs to $\Pc^\circ$ if and only if $A(f)$, the
immediate basin of attraction of $0$, contains both critical points of
$f$. Evidently, $\Pc^\circ$ is open in $\Fc$. To see that
$\Pc^\circ$ is one hyperbolic component of $\Fc$, not only of
$\Fc_{at}$, observe that polynomials $f_{b,\la}=z^3+bz^2+\la z$ with
$|\lambda|=1$ are not hyperbolic and that by
Corollary~\ref{c:samecomp}, the set $\Pc^\circ$ is connected.

\begin{dfn}\label{d:princrit}
The set $\Pc^\circ$ is called the \emph{principal hyperbolic component} of $\Fc$.
We say that a hyperbolic polynomial $f\in\Fc_{at}$ is an
\emph{IA-capture polynomial} (IA stands for \emph{Invariant
Attracting}) if a critical point of $f$, denoted by $\om_2(f)$, is
eventually mapped to $A(f)$ but does not lie in $A(f)$ (then the
remaining critical point $\om_1(f)$ belongs to $A(f)$, and no critical
point of $f$ belongs to $J(f)$).
A hyperbolic component $\Uc$ of $\Fc$ is of \emph{IA-capture type} if $\Uc$ contains an IA-capture polynomial.
Hyperbolic components of IA-capture type will also be called \emph{IA-capture components}.
\end{dfn}

Similarly to Remark~\ref{r:0-siegel}, we emphasize that IA-capture
polynomials have $0$ as their attracting fixed point. Evidently, both
critical points $\om_1(f), \om_2(f)$ are well-defined for an IA-capture
polynomial $f$.
Observe also that, similarly to the above, the fact that polynomials
$f_{b,\la}=z^3+bz^2+\la z$ with $|\lambda|=1$ are not hyperbolic
implies that any hyperbolic component $\Uc$ of $\Fc$ of IA-capture type
is contained in $\Fc_{at}$. Thus, the principal hyperbolic component
$\Pc^\circ$ of $\Fc$ and the hyperbolic components of $\Fc$ of
IA-capture type are subsets of $\Fc_{at}$.

We also need the concepts of rational lamination and full lamination.
Denote by $\disk$ the open unit disk in the complex plane centered at
the origin and by $\uc$ the unit circle which is the boundary of
$\disk$. We will identify $\R/\Z$ with $\uc$ via $x\mapsto
e^{2\pi i x}$.

Let $f$ be a polynomial of degree greater than $1$ and connected Julia set.
In this case all external rays with rational arguments land.
Given two rational angles $\al,\be\in \R/\Z$, we declare $\al \sim_r \be$
 iff the landing points of the corresponding external rays coincide.
This defines an equivalence relation on $\mathbb{Q}/\Z$.
The equivalence classes are finite (see Theorems~\ref{t:a2} and \ref{t:a3} with references).
We then consider the collection $\lam^r_f$ of all edges of the convex hulls
of all equivalence classes and call it the \emph{rational lamination} of $f$.

If the Julia set $J(f)$ is locally connected, then all external rays
land. Given any two angles $\al,\be\in\R/\Z$ we declare that $\al\sim \be$ iff
the landing points coincide. This defines an equivalence relation
on $\R/\Z$, and in this case too the equivalence classes are finite
(see Theorems~\ref{t:a2} and \ref{t:a3} and Theorem 1.1 of
\cite{kiw02}). The collection of all edges of the convex hulls of all
classes is denoted $\lam_f$ and is called the \emph{(full) lamination}
of $f$. We will refer to the elements of $\lam_f$ as \emph{leaves}.

We include in each lamination the singletons $\{e^{2\pi i\al}\}$ and call
them \emph{degenerate leaves,} with $\al\in \mathbb{Q}/\Z$ for
$\lam^r_f$, resp.\ $\al\in \R/\Z$ for $\lam_f$. The set $\mathcal C$ of
all possible chords of the unit disk and singletons in
the unit circle is equipped with a natural topology that associates
to a chord $\ol{ab}$ of $\uc$ with endpoints $a$, $b\in\uc$ the pair $\{a,b\}$ of the symmetric product $\uc
\times \uc/(a,b)\sim(b,a)$.

Clearly, in the case when $J(f)$ is locally connected we have
$\lam^r_f\subset \lam_f$ and, since $\lam_f$ is closed (see
Section \ref{ss:pcuts}), $\ol{\lam^r_f}\subset \lam_f$. Contrary
to what one may expect, it is \emph{not always} true that
$\ol{\lam^r_f}=\lam_f$. A typical example is the case of a quadratic
polynomial $Q$ with invariant Siegel domain and locally connected Julia
set. Then $\lam^r_Q$ consists only of degenerate leaves and, therefore,
$\ol{\lam^r_Q}$ cannot be distinguished from the rational lamination of
$z^2$ (abusing the language we will call such a lamination the
\emph{empty} lamination). For IS-capture polynomials, we relate
rational and full laminations in Subsection \ref{ss:pcuts}. Recall that
a polynomial with connected Julia set that belongs to a hyperbolic
component has a locally connected Julia set and, hence, a well-defined lamination.

\begin{thmA}
  If $f\in\Fc$ is an IS-capture polynomial, then there is a unique
  bounded hyperbolic component $\Uc$ in $\Fc$, whose boundary contains $f$.
  Moreover, $\Uc\subset\Fc_{at}$, we have $\lam_P=\ol{\lam^r_P}$ for all $P\in\Uc$,
   and there are two possibilities:
  \begin{enumerate}
  \item the Julia set of $f$ contains no periodic cutpoints, then $\Uc=\Pc^\circ$;
  \item the Julia set of $f$ has a repelling periodic cutpoint, then $\Uc$ is of IA-capture type.
  \end{enumerate}
\end{thmA}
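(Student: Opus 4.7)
The strategy is to perturb $f$ into a nearby hyperbolic polynomial while preserving its rational lamination, identify the resulting hyperbolic component from $\lam^r_f$, and then read off the dichotomy from the geometry of $J(f)$.

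First I would produce a hyperbolic polynomial arbitrarily close to $f$. Fix $k\ge 0$ with $f^k(\ca(f))\in\Delta(f)$ and perform a Douady--Ghys type quasiconformal surgery supported on the grand orbit of $\Delta(f)$: replace the irrational rotation of $f$ on $\Delta(f)$ by a holomorphic model in which $0$ is attracting, transport the resulting Beltrami coefficient by all iterates, and straighten via the Measurable Riemann Mapping Theorem. Since the dilatation can be taken arbitrarily small, this yields a hyperbolic polynomial $\wt f\in\Fc_{at}$ arbitrarily close to $f$. In particular, $f\in\bd\Uc$ for some hyperbolic component $\Uc\subset\Fc_{at}$.

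Next I would establish uniqueness together with the lamination identity. Because the surgery is supported off every repelling periodic point of $f$, it preserves the landing pattern of rational external rays, giving $\lam^r_{\wt f}=\lam^r_f$. For an arbitrary hyperbolic component $\Uc'$ with $f\in\bd\Uc'$, an analogous stability argument for rational rays landing at repelling periodic points (and the fact that $\lam^r_P$ is locally constant on $\Uc'$) yields $\lam^r_P=\lam^r_f$ for every $P\in\Uc'$; since hyperbolic components of $\Fc$ are classified by their rational lamination (via \cite{kiw02}), this forces $\Uc'=\Uc$. Moreover, for any $P\in\Uc$, local connectivity of $J(P)$ combined with the fact that rational angles are dense in the set of arguments supporting $\lam_P$ yields $\lam_P=\ol{\lam^r_P}=\ol{\lam^r_f}$.

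Finally, the dichotomy follows by reading $\Uc$ off from $\lam^r_f$. If $J(f)$ contains no periodic cutpoint, then $\lam^r_f$ has no non-degenerate periodic leaf; consequently the only cycle of bounded Fatou components of $\wt f$ is the attracting fixed point at $0$, both critical points of $\wt f$ lie in $A(\wt f)$, and $\Uc=\Pc^\circ$. If $J(f)$ has a repelling periodic cutpoint $p$, then a non-trivial cycle of rational leaves lands at $p$ and produces a cycle of bounded Fatou components for $\wt f$ distinct from $A(\wt f)$; since this cycle is attracting it must absorb the orbit of $\re(f)$, exhibiting $\wt f$ as an IA-capture and $\Uc$ as an IA-capture component. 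I expect the main obstacle to lie in the surgery step, specifically in arranging that the recurrent orbit of $\re(f)$, which accumulates on all of $\bd\Delta(f)$, collapses into a single attracting cycle after perturbation rather than being smeared across many disjoint Fatou components---this demands a careful analysis on an invariant graph cut out by the periodic leaves of $\lam^r_f$.
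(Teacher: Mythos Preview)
Your overall architecture---perturb to a hyperbolic map, compare rational laminations, deduce uniqueness---matches the paper's, but the execution diverges at two points and contains a genuine error in the dichotomy step.

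\textbf{Perturbation.} The paper avoids surgery entirely: it uses the explicit one-parameter family $f_\eps(z)=(1-\eps)\la z+bz^2+z^3$ and the Buff--Petersen result (Lemma~\ref{l:perturb1}) to guarantee that compact subsets of $\Delta(f)$ lie in $A(f_\eps)$. The hard step is then Lemma~\ref{l:zeta-out}: showing $\re(f_\eps)\in A(f_\eps)$ by analyzing the induced normalized Blaschke product $Q_a$ on $A(f_\eps)$ and proving that if instead $\ca(f_\eps)$ were the critical point in $A(f_\eps)$, its Blaschke orbit would have to approach $\uc$ (Lemma~\ref{l:bpcror}) while simultaneously staying a definite modulus away from it. Your Douady--Ghys surgery could in principle replace this, but the obstacle you flag in the last paragraph is exactly the content of Lemma~\ref{l:zeta-out}, and your proposed resolution (``an invariant graph cut out by periodic leaves'') does not address it; the difficulty is analytic, not combinatorial.

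\textbf{The dichotomy.} Here there is a real mistake. You write that a repelling periodic cutpoint ``produces a cycle of bounded Fatou components for $\wt f$ distinct from $A(\wt f)$; since this cycle is attracting it must absorb the orbit of $\re(f)$.'' This is false on both counts. An IA-capture polynomial has $0$ as its \emph{only} attracting periodic point; the second critical point $\om_2$ lies in a \emph{strictly preperiodic} Fatou component that eventually maps to $A(\wt f)$, not in a separate attracting cycle. The periodic cutpoint is the landing point of the major $M(G)$ of the invariant quadratic gap (Theorem~\ref{t:major}, Corollary~\ref{c:attracapt}); it separates $J(\wt f)$ but creates no new attracting dynamics. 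The correct logic runs the other way: once you know $\wt f$ is hyperbolic with $\re(\wt f)\in A(\wt f)$ and $\wt f^{m_f}(\ca(\wt f))\in A(\wt f)$, then $\wt f$ is automatically in $\Pc^\circ$ or an IA-capture component; Corollary~\ref{c:attracapt} then shows IA-capture forces a repelling periodic cutpoint, and the stability of that cutpoint under $f_\eps\to f$ gives the cutpoint in $J(f)$.

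\textbf{A missing case.} You also do not rule out bounded hyperbolic components $\Vc\not\subset\Fc_{at}$ with $f\in\bd\Vc$. The paper handles this separately: for $g\in\Vc$ a periodic ray lands at the repelling fixed point $0$; by Lemmas~\ref{l:land-ratio} and~\ref{l:rep} this would force a parabolic periodic point for $f$, contradicting that $f$ is an IS-capture.
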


A polynomial is said to be \emph{$J$-stable with respect to a family of
polynomials} if its Julia set admits an equivariant holomorphic motion
over some neighborhood of the map in the given family \cite{lyu83,MSS}.
Say that $f\in \Fc_\la$ is \emph{$\la$-stable} if it is $J$-stable with
respect to $\Fc_\lambda$ with $\lambda=f'(0)$, otherwise $f$ is called
\emph{$\la$-unstable}.
A component of the set of $\la$-stable polynomials in $\Fc_\la$ is called an \emph{IS-capture component}
 if some (equivalently, all) polynomials from this component are IS-capture polynomials.
Thus IS-capture components are complex one-dimensional analytic disks in the two-dimensional space $\Fc$.
Every such disk is contained in a slice $\Fc_\la$ represented as a straight (complex) line in coordinates $(\la,b)$ of $\Fc$.

\begin{thmB}
  Every IS-capture polynomial belongs to some IS-capture component.
Every IS-capture component is contained in the boundary of a unique hyperbolic component $\Uc$ of $\Fc$.
Moreover, $\Uc=\Pc^\circ$ or $\Uc$ is of IA-capture type.
Conversely, let $\Uc$ be either an IA-capture component or $\Pc^\circ$.
Then the boundary of $\Uc$ contains uncountably many IS-capture components lying in $\Fc_\la$, where
 $\la=e^{2\pi i\ta}$, and $\ta$ runs through all Brjuno numbers in $\R/\Z$.
\end{thmB}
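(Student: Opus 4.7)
The plan is to split Theorem B into three assertions and address them in order: first, every IS-capture polynomial is $\la$-stable inside its slice and hence lies in an IS-capture component; second, any such component is contained in the boundary of a unique hyperbolic component $\Uc$ equal to $\Pc^\circ$ or of IA-capture type; third, conversely, for every Brjuno number $\ta$ and every $\Uc$ of the two admissible types, the slice $\Fc_{e^{2\pi i\ta}}$ meets $\bd(\Uc)$ in IS-capture components.

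For the first assertion, fix an IS-capture $f\in\Fc_\la$ with $\la=e^{2\pi i\ta}$. Existence of $\Delta(f)$ forces $\ta$ to be Brjuno. In $\Fc_\la$ the multiplier at $0$ is the constant $\la$, so the classical persistence of Siegel disks with Brjuno rotation number produces a Siegel disk $\Delta(g)$ varying holomorphically with $g$ near $f$; in particular the capture condition (some iterate of $\mathrm{ca}(g)$ lands in $\Delta(g)$) is open. The remaining critical point $\mathrm{re}(g)$ is recurrent with $\bd(\Delta(g))$ in its limit set by \cite{man93}, and no perturbation inside $\Fc_\la$ can create a new attracting or parabolic cycle without first destroying the capture. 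The Ma\~n\'e--Sad--Sullivan theorem \cite{MSS,lyu83}, applied inside the slice $\Fc_\la$, then gives $\la$-stability of $f$; the stability component $\Lambda$ containing $f$ consists entirely of IS-captures and is the sought IS-capture component.

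For the second assertion, Theorem A applied at any $g\in\Lambda$ produces a unique bounded hyperbolic component $\Uc\subset\Fc_{at}$ with $g\in\bd(\Uc)$, equal to $\Pc^\circ$ or of IA-capture type, and characterized among hyperbolic components by sharing with $g$ the rational lamination $\lam^r_g$. Since $\la$-stability is witnessed by an equivariant holomorphic motion carrying landing patterns of rational external rays, the rational lamination $\lam^r_g$ is constant along $\Lambda$; hence the characterization from Theorem A yields the \emph{same} $\Uc$ for every $g\in\Lambda$, so $\Lambda\subset\bd(\Uc)$. Uniqueness of $\Uc$ for the whole component $\Lambda$ is inherited from its uniqueness at each point. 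Since $\Uc\subset\Fc_{at}$, this also fixes the two admissible types.

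For the converse, fix $\Uc$ as stated and a Brjuno number $\ta\in\R/\Z$, and set $\la_0=e^{2\pi i\ta}$. By \cite{mil12}, $\Uc$ is a complex two-cell, and the multiplier map $g\mapsto g'(0)$ is a holomorphic submersion $\Uc\to\disk$ whose fibers $\Uc\cap\Fc_\la$ are one-dimensional analytic disks. I would select a path in $\Uc$ along which $g'(0)\to\la_0$ radially, moving through the fibers so that the marker of the captured critical point stays inside its bounded attracting Fatou component. The Brjuno hypothesis on $\ta$ is exactly what guarantees Carath\'eodory-convergence of the attracting petals at $0$ to a genuine Siegel disk in the limit, so the limit polynomial $f_0\in\bd(\Uc)\cap\Fc_{\la_0}$ has $0$ as a Siegel fixed point, and the tracked captured critical marker lands in $\Delta(f_0)$ under some iterate. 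Hence $f_0$ is an IS-capture; by the first assertion it sits in an IS-capture component of $\bd(\Uc)\cap\Fc_{\la_0}$. Different Brjuno $\ta$ give disjoint slices, so this yields uncountably many IS-capture components in $\bd(\Uc)$. The main obstacle is exactly this limiting argument: one must prevent parabolic implosion from losing the capture and rule out escaping or colliding critical orbits during the radial approach. Quantitative Siegel-disk control coming from the Brjuno condition, combined with the structural results of \cite{bopt14b,bopt16a}, is what makes the limit $f_0$ a genuine IS-capture rather than a degenerate boundary point.
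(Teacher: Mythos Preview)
Your second assertion is handled correctly and in the same spirit as the paper: Theorem~A plus constancy of the rational lamination along a stable component. For the first assertion, note that the sentence ``Existence of $\Delta(f)$ forces $\ta$ to be Brjuno'' is false for cubics (Yoccoz's converse is only known in degree~2), and your MSS argument is sketchy; the paper instead cites Zakeri and proves a qc-deformation lemma (Lemma~\ref{l:no-isol}) showing directly that any IS-capture sits in an open family of IS-captures in $\Fc_\la$. This is a different mechanism from your ``persistence of Siegel disks plus no new non-repelling cycles'' outline, and it does not require $\ta$ to be Brjuno.

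The genuine gap is in the converse. Your plan is to follow a radial path in $\Uc$ with $g'(0)\to\la_0$ while ``tracking'' the captured critical point and then invoke Carath\'eodory convergence of the attracting basin to a Siegel disk. You correctly identify the obstacles (parabolic implosion, loss of capture, escape of critical orbits) but do not overcome them: nothing in your argument prevents the captured critical value from drifting to $\bd A(g)$ as $g'(0)\to\uc$, and Carath\'eodory limits of attracting basins need not be the Siegel disk of the limit map. The paper bypasses all of this with an algebraic device: it works on the curve $\Xc_n=\{f:\ f^n(c)=0\text{ for some critical point }c\}$, shows by qc deformation (Lemma~\ref{l:center}) or by \cite{PT09} that $\Xc_n\cap\Uc$ meets every fiber $\Fc_\la$ with $|\la|<1$, and then uses openness/closedness of the multiplier image to produce a limit $f\in\ol{\Xc_n}\cap\bd(\Uc)$ with $f'(0)=e^{2\pi i\ta}$. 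The condition $f^n(c)=0$ is polynomial in the coefficients, so it survives the limit automatically; since $\ta$ is Brjuno, $0\in\Delta(f)$, and hence $f$ is an IS-capture with no delicate convergence analysis needed. This is the key idea you are missing: replace the soft ``captured in $A(g)$'' condition by the rigid algebraic condition ``$f^n(c)=0$'', which is closed and lands you inside the Siegel disk for free.
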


The first claim is contained in \cite[Theorem 5.3]{Z}.
A more precise formulation of the second part of Theorem B is contained in Theorem \ref{t:IS-BdP}.
We will apply Theorem A to the study of $\Pc$, the closure of $\Pc^\circ$ in $\Fc$.
The following are some properties of polynomials in $\Pc$.

\begin{thm}[\cite{bopt14}]\label{t:prophd}
If $f=f_{\la, b}\in \Pc$, then $|\la|\le 1$, the Julia set $J(f)$ is connected,
$f$ has no repelling periodic cutpoints in $J(f)$, and all its non-repelling
periodic points, except possibly $0$, have multiplier 1.
\end{thm}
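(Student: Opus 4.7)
The plan is to approximate any $f\in\Pc$ by a sequence $f_n\in\Pc^\circ$ and exploit the well-understood dynamics of $f_n$: the marked fixed point $0$ is attracting, so $|f_n'(0)|<1$; both critical points of $f_n$ lie in the immediate basin $A(f_n)$; the Julia set $J(f_n)$ is a Jordan curve, hence has no cutpoints; and by Fatou's theorem the attracting fixed point $0$ is the only non-repelling periodic cycle of $f_n$.

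The first two conclusions follow at once. Since $\Pc^\circ\subset\Fc_{at}$ forces $|f_n'(0)|<1$, passing to the limit gives $|\la|\le 1$. Since no critical orbit of $f_n$ escapes, each $f_n$ represents a class in the cubic connectedness locus $\Mc_3$, which is closed, so $f$ also has connected Julia set. For the absence of repelling periodic cutpoints I would argue by contradiction using stability of rational rays at repelling periodic orbits: if $p\in J(f)$ were a repelling periodic cutpoint, then at least two external rays with rational arguments would land at $p$, and by the holomorphic motion of the associated hyperbolic set together with the continuous persistence of rational ray landings at repelling periodic points, these rays would still land at the persistent repelling point $p_n\to p$ for every $f_n$ sufficiently close to $f$. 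Hence $p_n$ would be a cutpoint of $J(f_n)$, contradicting the fact that $J(f_n)$ is a Jordan curve.

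The main obstacle is the last claim. Let $z\ne 0$ be a non-repelling periodic point of $f$ of period $q$, with multiplier $\mu=(f^q)'(z)$, and let $z(g),\mu(g)$ be its holomorphic continuations on a neighborhood $U$ of $f$ in $\Fc$. For each $f_n\in\Pc^\circ\cap U$, Fatou's theorem forces $z(f_n)$ to be repelling, so $|\mu(f_n)|>1$; passing to the limit together with the non-repelling hypothesis on $z$ gives $|\mu|=1$. To force $\mu=1$ rather than an arbitrary point of the unit circle, one must rule out every $\mu=e^{2\pi i\ta}$ with $\ta\not\equiv 0\pmod{\Z}$. For $\ta$ rational, $z$ is a non-trivially rotating parabolic cycle that bifurcates into an attracting cycle together with a repelling cycle under an arbitrarily small perturbation preserving $|\la|<1$, producing $g\in\Pc^\circ$ near $f$ with an attracting cycle distinct from the one at $0$ and contradicting the uniqueness of attracting cycles in the principal component. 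For $\ta$ irrational---the delicate case---a Siegel or Cremer cycle at $z$ would force at least one critical orbit of nearby maps to be associated with $z(g)$ rather than be absorbed into $A(g)$, incompatible with both critical points of every nearby $f_n\in\Pc^\circ$ lying in $A(f_n)$. This irrational case is the technical heart of the proof and combines the Ma\~n\'e--Sad--Sullivan and Lyubich $J$-stability theory with the combinatorial constraint that $\Pc^\circ$ requires both critical points to be captured by the single immediate basin $A(f_n)$.
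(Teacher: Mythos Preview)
The paper does not prove this theorem; it is quoted from \cite{bopt14} and used as input. So there is no in-paper argument to compare against, and your proposal must be judged on its own.

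Your treatment of the first three conclusions is sound and essentially the standard route: passing $|\la_n|<1$ to the limit, closedness of the connectedness locus, and the ray-stability Lemma~\ref{l:rep} combined with the fact that on a Jordan curve Julia set every boundary point is the landing point of exactly one external ray (so two periodic rays cannot co-land for $f_n\in\Pc^\circ$). That part is fine.

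The gap is in the last clause, and it is genuine in both sub-cases you distinguish. In the rational case your sentence ``producing $g\in\Pc^\circ$ near $f$ with an attracting cycle distinct from the one at $0$'' is self-defeating: you have already used Fatou's count to show that every $g\in\Pc^\circ$ has $0$ as its \emph{only} non-repelling cycle, hence $|\mu(g)|>1$ for all $g\in\Pc^\circ$ near $f$. The parabolic bifurcation certainly produces maps $g$ with $|\mu(g)|<1$ arbitrarily close to $f$, but none of those can lie in $\Pc^\circ$, so no contradiction is obtained; you would need an additional argument explaining why $\Pc^\circ$ cannot accumulate on $f$ while staying entirely in $\{|\mu|>1\}$, and nothing you wrote supplies that. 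In the irrational case the problem is similar: the Ma\~n\'e/Fatou ``a critical orbit must be associated with the indifferent cycle'' statement is a property of $f$ itself, where $z$ is Siegel or Cremer. For nearby $f_n\in\Pc^\circ$ the continuation $z(f_n)$ is repelling, so no critical orbit is forced to follow it, and both critical points sit happily in $A(f_n)$. The constraint you invoke simply does not transfer from $f$ to $f_n$, and invoking $J$-stability does not help because $f$, having an indifferent cycle, is \emph{not} $J$-stable in $\Fc$. This last clause is exactly where the work in \cite{bopt14} lies, and your sketch does not yet contain the mechanism that forces $\mu=1$.
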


These properties extend almost verbatim to the higher degree case \cite{bopt14}.
Theorem~\ref{t:prophd} motivates Definition~\ref{d:cubio}.

\begin{dfn}[\cite{bopt14}]\label{d:cubio}
Let $\mathcal{CU}$ be the family of cubic polynomials $f\in\bigcup_{|\la|\le 1}\Fc_\la$ such that $J(f)$ is connected,
$f$ has no repelling periodic cutpoints in $J(f)$, and all
its non-repelling periodic points, except possibly $0$, have multiplier 1.
The family $\mathcal{CU}$ is called the \emph{Main Cubioid} of $\Fc$.
\end{dfn}

Note that $\Pc^\circ$ and $\cuc$ are subsets of $\Fc$ that play a similar role to
 the principal hyperbolic component $\phd_3$ and the main cubioid $\cu$ in the (unmarked) moduli space of cubic polynomials.
However, the difference is that, when defining $\Pc^\circ$ and $\cuc$,
 we take into account the special role of the marked fixed point $0$ for polynomials in $\Fc$.
As a consequence, the sets $\Pc^\circ$ and $\cuc$ are not stable under arbitrary affine conjugacies.
By Theorem \ref{t:prophd}, this definition immediately implies that
\[\mathcal P \subset \mathcal{CU}.\]

For a compact set $X\subset \C$, define the \emph{topological hull
$\thu(X)$} of $X$ as the union of $X$ with all bounded components of
$\C\setminus X$. We write $\Pc_\la$ for the $\la$-slice of $\Pc$, i.e.,
for the set $\Pc\cap\Fc_\la$.

\begin{thmC}
IS-capture polynomials do not belong to $\mathcal{CU}\sm \Pc$.
If $\Wc$ is a component of $\thu(\Pc_\la)\sm \Pc_\la$ and $f\in \Wc$, then
the following holds.
\begin{enumerate}
\item Any such polynomial $f$ is $\la$-stable.
\item Critical points of $f$ are distinct and belong to $J(f)$.
\item The Julia set $J(f)$ has positive Lebesgue measure and carries an invariant
line field.
\end{enumerate}
\end{thmC}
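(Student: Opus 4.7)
The first assertion is an immediate consequence of Theorem~A. If $f$ is an IS-capture polynomial, Theorem~A places $f$ on the boundary of a unique bounded hyperbolic component $\Uc \subset \Fc_{at}$. If $\Uc = \Pc^\circ$, then $f \in \ol{\Pc^\circ} = \Pc$ and so $f \notin \cuc \sm \Pc$. If instead $\Uc$ is of IA-capture type, then Theorem~A(2) asserts that $J(f)$ has a repelling periodic cutpoint; by Definition~\ref{d:cubio}, no polynomial in $\cuc$ has such a cutpoint, so $f \notin \cuc$ and again $f \notin \cuc \sm \Pc$.

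For a component $\Wc$ of $\thu(\Pc_\la) \sm \Pc_\la$, the plan is to combine the first assertion with the dichotomy proved in \cite{bopt14b}: each such bounded complementary component $\Wc$ either consists entirely of \emph{queer} polynomials---for which (1) $\la$-stability, (2) two distinct critical points on $J(f)$, and (3) a positive-measure Julia set carrying an invariant line field are the standard conclusions---or $\Wc$ meets a capture-type hyperbolic component, i.e., $\Wc$ contains either an IS-capture or an IA-capture polynomial. To prove (1)--(3) for every $f \in \Wc$ it therefore suffices to rule out the capture alternative.

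Ruling this alternative out is where the bulk of the work lies. If $f \in \Wc$ is an IS-capture, Theorem~A gives $f \in \bd \Uc$ for a bounded hyperbolic component $\Uc$. The case $\Uc = \Pc^\circ$ is immediate: then $f \in \Pc_\la$, contradicting $\Wc \cap \Pc_\la = \emptyset$. When $\Uc$ is IA-capture, Theorem~B places $f$ on an IS-capture component $D \subset \Fc_\la \cap \bd\Uc$; combined with the separation results of \cite{bopt16a} for IA-capture components, this yields a contradiction with the position $\bd \Wc \subset \Pc_\la \subset \cuc$. If instead $\Wc$ contains an IA-capture polynomial $g$, one passes via Theorem~B to an IS-capture on the boundary of the corresponding IA-capture component in $\Fc_\la$, reducing to the case just treated.

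The main obstacle is the precise topological and geometric argument controlling how an IA-capture hyperbolic component of $\Fc$ and its IS-capture boundary disks can intersect a fixed slice $\Fc_\la$ relative to $\Pc_\la$; this is exactly where the detailed analysis of \cite{bopt16a} is essential, and where the full strength of the uniqueness statement in Theorem~A is needed. Once the capture alternative is eliminated, the queer conclusions (1)--(3) follow from standard $J$-stability arguments and the classical theory of invariant line fields for non-hyperbolic polynomials in bounded complementary components of $\Pc$.
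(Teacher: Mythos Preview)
Your proof of the first assertion is correct and essentially identical to the paper's: Theorem~A gives the dichotomy, and in the IA-capture case the repelling periodic cutpoint immediately contradicts membership in $\cuc$.

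For parts (1)--(3), however, you overcomplicate the argument and leave a genuine gap. The missing link is Theorem~\ref{t:extendclo}: it states directly that $\thu(\Pc_\la)\subset\cuc$. Hence every $f\in\Wc$ lies in $\cuc\sm\Pc$, and the first assertion you just proved immediately rules out $f$ being an IS-capture. The dichotomy of Theorem~\ref{t:sie-quee} (which is only ``IS-capture type or queer type'' --- it does \emph{not} include IA-captures as a third alternative, so that part of your outline is a misstatement of the cited result) then forces $\Wc$ to be of queer type, and Theorems~\ref{t:extendclo} and~\ref{t:nosiegel} together with Lemma~\ref{l:2crpts} give (1)--(3). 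That is the paper's entire argument: one sentence after the first assertion.

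Your proposed route instead invokes Theorem~B and unspecified ``separation results of \cite{bopt16a}'' to derive a contradiction from the position of $\bd\Wc$ inside $\Pc_\la$. You correctly identify this as ``the main obstacle'' and do not resolve it; in fact no such obstacle exists once you use $\Wc\subset\cuc$ rather than only $\bd\Wc\subset\Pc_\la\subset\cuc$. The detour through IS-capture components on $\bd\Uc$ and the further reduction of the IA-capture case via Theorem~B are both unnecessary (and the latter is not even well-posed: the IS-capture components produced by Theorem~B live in slices $\Fc_{\la'}$ with $|\la'|=1$, not in the slice $\Fc_\la$ containing $\Wc$).
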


Parts of Theorem B follow from \cite[Theorem 3.4]{Z}.

In Section \ref{s:cu}, we obtain corollaries of Theorem B that help
distinguish between Siegel and Cremer fixed points of a given multiplier.

\section{Rays and laminations}\label{ss:pcuts}

We will make use of the concepts of the full/rational \emph{lamination}
associated to a polynomial with connected Julia set. These concepts are
due to Thurston \cite{thu85} and Kiwi \cite{kiw97, kiw01, kiw04}. In
fact, in \cite{thu85} full laminations are defined independently of
polynomials as a combinatorial concept and are often studied in that
setting (see, e.g., \cite{bmov13}). Laminations are important tools of
combinatorial complex polynomial dynamics. Some of these tools are
applicable to polynomials of arbitrary degree, including those with
non-locally connected Julia sets. However, for the sake of brevity in
this paper we avoid unnecessary generality and define full lamination
only in the case when $P$ has a locally connected Julia set.

\subsection{Rays}

Studying periodic external rays of polynomials is a powerful tool in
complex dynamics. Given a polynomial $f$ with connected Julia set we
denote by $R_f(\al)$ the external ray of $f$ with argument $\al$.
(According to our convention, arguments of external rays are elements of $\R/\Z$ rather than $\R/2\pi\Z$.)
The arguments of external rays depend on the choice of a 
B\"ottcher coordinate near infinity. For an arbitrary cubic polynomial,
such coordinate is defined up to a sign, i.e., up to the involution
$z\mapsto -z$. However, for $f\in\Fc$, we can distinguish a linearizing
coordinate asymptotic to the identity. We assume that, whenever
$f\in\Fc$, the linearizing coordinate near infinity is chosen in this
way.

We begin this subsection by quoting known results concerning rational rays.

\begin{lem}[see, e.g., \cite{M}, Section 18]\label{l:land-ratio}
Let $f$ be a polynomial. All external rays of $f$ with (pre)periodic arguments
land.
The landing points eventually map to periodic points that are
parabolic or repelling. If $J(f)$ is connected then all rays landing at points that are eventually mapped
to parabolic or repelling periodic points have rational arguments.
\end{lem}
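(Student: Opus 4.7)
The plan is to establish the two directions of the statement independently, using techniques going back to Douady--Hubbard. First I would prove that every external ray with (pre)periodic argument lands. It suffices to treat the periodic case, since a preperiodic ray of argument $\alpha$ is a pullback component of a periodic ray, and landing is preserved under iterated pullback as long as one stays away from critical values, which one does along the ray itself. If $\alpha$ has period $p$ under multiplication by $d = \deg f$, then $f^p$ maps $R_f(\alpha)$ onto itself. Parametrize this ray by the Green's function $G$; in that parametrization $f^p$ acts as $G \mapsto d^p G$, so the inverse branch of $f^p$ fixing the ray is a contraction toward the Julia set. Applying the hyperbolic contraction of $f^{-p}$ on the component of $\hC \sm K(f)$ containing $\infty$ shows that the entire ray accumulates on a single point.

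Next I would identify the nature of the landing point $z$ of such a periodic ray. Continuity of $f^p$ on $\ol{R_f(\alpha) \cup \{z\}}$ gives $f^p(z) = z$. Let $\mu = (f^p)'(z)$. The case $|\mu| < 1$ is impossible because an attracting fixed point of $f^p$ lies in an open basin disjoint from $R_f(\alpha) \subset \C \sm K(f)$. The Snail Lemma (Milnor §16) rules out the irrationally indifferent case: a forward $f^p$-invariant arc landing at such a $z$ would be forced to spiral, contradicting its embedded structure. Hence $\mu$ is repelling or a root of unity, i.e.\ $z$ is repelling or parabolic, and the preperiodic case follows immediately.

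For the converse, assume $J(f)$ is connected and $R_f(\alpha)$ lands at a point $x$ whose forward orbit contains a parabolic or repelling periodic point $w$ of period $p$. The key step is the finiteness of the set $\Ac(w)$ of rays landing at $w$: in the repelling case this follows by conjugating $f^p$ near $w$ to multiplication by $\mu$ via a local linearizer and observing that only finitely many accessing rays are compatible with this local dynamics; in the parabolic case the finiteness follows from Leau--Fatou petal theory. The dynamical relation $f(R_f(\be)) = R_f(d\be)$ then forces $f^p$ to permute $\Ac(w)$, hence every ray in $\Ac(w)$ has argument periodic under multiplication by $d^p$ and so lies in $\mathbb{Q}/\Z$. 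Since $x$ eventually maps to $w$ and $f$ sends rational arguments to rational arguments, $\alpha$ itself is rational.

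The main obstacle is the landing statement for periodic rays. The cleanest proof depends on a careful hyperbolic-contraction argument on the Fatou component of $\infty$, and one has to be attentive to degenerate cases where $K(f)$ is very thin. For the converse, the genuinely nontrivial input is the finite-access property at parabolic periodic points, whose proof requires separating accesses among the distinct attracting and repelling petals; once that is in place, the permutation argument that produces a rational argument is essentially formal.
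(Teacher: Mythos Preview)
The paper does not give its own proof of this lemma: it is stated with a reference to Milnor's textbook and left at that. Your sketch is a correct outline of exactly the standard argument found there, so there is nothing to compare.

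One small inaccuracy worth flagging: in the landing argument you speak of ``the hyperbolic contraction of $f^{-p}$'' on the basin of infinity, but when $J(f)$ is connected $f$ is an unbranched self-cover of that basin, hence a local \emph{isometry} for the hyperbolic metric, and so is $f^{-p}$. The actual mechanism is that the fundamental ray-segments between potentials $d^{-(n+1)p}$ and $d^{-np}$ all have the \emph{same} hyperbolic length, and as they approach $\partial\Omega$ the comparison between hyperbolic and Euclidean metrics forces their Euclidean diameters to tend to $0$; this is what yields convergence of the ray. Your phrasing suggests a strict contraction that is not there, though the conclusion you draw is the right one. The Snail Lemma step and the finiteness/permutation argument for the converse are both correctly identified as the essential inputs.
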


We call an external ray \emph{smooth} if it does not contain an escaping (pre)critical point.
Let us now state results on continuity of smooth periodic rays landing at repelling
periodic points. The next lemma can be found in \cite{GM} (Lemma B.1)
or \cite{DH} (Lecture VIII, Section II, Proposition 3).

\begin{lem}\label{l:rep}
Let $f$ be a polynomial, and $z$ be a repelling periodic point of $f$.
If a \emph{smooth periodic} ray $R_f(\theta)$ lands at $z$, then, for every polynomial $g$
sufficiently close to $f$, the ray $R_{g}(\theta)$ lands at a repelling
periodic point $w$ close to $z$, and $w$ depends holomorphically on $g$.
\end{lem}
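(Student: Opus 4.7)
The plan is to combine three standard ingredients: holomorphic continuation of the repelling periodic point $z$, Koenigs linearization at that point, and holomorphic dependence of the B\"ottcher coordinate near infinity. Let $p$ be the period of $z$ under $f$ and set $\mu=(f^p)'(z)$, so $|\mu|>1$. Since $\mu\ne 1$, the implicit function theorem applied to the holomorphic map $(g,\zeta)\mapsto g^p(\zeta)-\zeta$ yields a holomorphic function $g\mapsto w(g)$, defined on a neighborhood $\Nc$ of $f$, with $w(f)=z$, $g^p(w(g))=w(g)$, and $w(g)$ still repelling. Schroeder's series then produces a Koenigs linearizing coordinate $\phi_g$ on a neighborhood $U_g$ of $w(g)$ conjugating $g^p$ to multiplication by $(g^p)'(w(g))$, and $(\phi_g,U_g)$ can be arranged so that some fixed compact neighborhood of $z$ is contained in $U_g$ for all $g\in\Nc$, with $\phi_g$ holomorphic in $g$.

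The main work concerns the ray itself. After shrinking $\Nc$, the B\"ottcher coordinate $B_g$ is defined on $\{|B_g|>R\}$ for some large $R>1$ and all $g\in\Nc$, depending holomorphically on $g$, so the outer portion of $R_g(\theta)$ of B\"ottcher modulus $>R$ moves holomorphically with $g$. Let $N$ be a common multiple of $p$ and the period of $\theta$ under angle multiplication by $d=\deg f$, so that $R_f(\theta)$ is forward-invariant under $F=f^N$. The smoothness hypothesis says the forward $F$-orbit of $R_f(\theta)$ avoids critical points. Consequently, the intermediate compact arc of $R_f(\theta)$ lying between $\{|B_f|>R\}$ and a fixed compact neighborhood of $z$ inside $U_f$ is the image of the outer segment under a univalent branch of $F^{-k}$ for some $k\ge 1$, and the analogous inverse branch of $g^{Nk}$ is well defined and holomorphic in $g\in\Nc$ after a further shrinking of $\Nc$. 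Pulling back the outer segment of $R_g(\theta)$ through this branch provides a natural prolongation of $R_g(\theta)$ into $U_g$.

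It remains to identify the landing point inside $U_g$. By construction the extended ray is $F$-invariant, so $\phi_g$ maps its intersection with $U_g$ into a straight line through the origin (depending holomorphically on $g$), which accumulates only at $0$; hence $R_g(\theta)$ lands at $w(g)$, and $w(g)$ is close to $z$ for $g$ close to $f$ by the implicit function theorem. The main obstacle, and the reason smoothness of $R_f(\theta)$ is imposed, is the uniform choice of inverse branches over $g\in\Nc$: smoothness provides a definite separation between the intermediate arc and the postcritical set of $F$ that survives small perturbation of $f$, yielding a single holomorphic family of inverse branches defined on all of $\Nc$.
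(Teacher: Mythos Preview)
The paper does not supply its own proof of this lemma; it simply cites Lemma~B.1 of \cite{GM} and \cite{DH}. Your sketch follows exactly the standard route of those references: holomorphic continuation of the repelling cycle via the implicit function theorem, B\"ottcher at infinity to control the outer end of the ray, finitely many univalent inverse branches (available precisely because the ray is smooth) to bridge the gap, and the linearizing coordinate at $w(g)$ to force landing. So the overall architecture is correct and matches the cited proofs.

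There is one genuine slip in the last step. In the Koenigs coordinate $\phi_g$, the map $g^N$ becomes multiplication by $\Lambda_g=(g^p)'(w(g))^{N/p}$, which is a complex number of modulus $>1$ but in general not real positive. An $F$-invariant arc near $w(g)$ therefore becomes a $\Lambda_g$-invariant arc, i.e.\ a logarithmic spiral, not a straight line through the origin. Your conclusion survives unchanged: take any point $\zeta_0$ on the $\phi_g$-image of the tail of the ray; then the successive points $\Lambda_g^{-k}\zeta_0$ lie on that image and tend to $0$ since $|\Lambda_g|>1$, and the arcs between consecutive such points have diameters shrinking geometrically, so the whole curve accumulates only at $0$. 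Replace ``straight line through the origin'' by ``$\Lambda_g$-invariant arc spiralling to $0$'' and the argument is complete.
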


A useful corollary of this lemma is stated below.

\begin{cor}[Lemma 4.7 \cite{bopt14b}]\label{c:converge}
Suppose that $h_n\to h$ is an infinite sequence of polynomials of
degree $d$ with connected Julia sets, and $\{\al, \be\}$ is a pair of periodic arguments such
that the external rays $R_{h_n}(\al)$, $R_{h_n}(\be)$ land at the same
repelling periodic point $x_n$ of $h_n$. If the external rays $R_h(\al)$, $R_h(\be)$
do not land at the same periodic point of $h$, then one of
these two rays must land at a parabolic point of $h$.
\end{cor}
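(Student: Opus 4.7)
The plan is to argue by contradiction, combining the two lemmas just recorded. Assume that $R_h(\al)$ and $R_h(\be)$ do not land at a common periodic point. Since $\al$ and $\be$ are periodic arguments and $J(h)$ is connected, Lemma \ref{l:land-ratio} guarantees that both rays land at periodic points, say $z_\al$ and $z_\be$, each of which is either repelling or parabolic. To derive a contradiction with the statement, it suffices to prove that if neither $z_\al$ nor $z_\be$ is parabolic, so that both are repelling, then $z_\al=z_\be$.

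This is where I would apply Lemma \ref{l:rep}. First I would verify that $R_h(\al)$ and $R_h(\be)$ are smooth. This is immediate from the assumption that $J(h)$ is connected: all critical points of $h$ lie in $K(h)$, and the full preimage of $K(h)$ under $h$ is again $K(h)$, so no (pre)critical point escapes and no external ray of $h$ contains an escaping (pre)critical point. Lemma \ref{l:rep} then applies to both rays and produces, for every polynomial $g$ in a sufficiently small neighborhood $\Nc$ of $h$, repelling periodic points $w_\al(g)$ and $w_\be(g)$ at which $R_g(\al)$ and $R_g(\be)$ land, depending holomorphically on $g$ and satisfying $w_\al(h)=z_\al$ and $w_\be(h)=z_\be$.

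Specializing to $g=h_n$ for $n$ so large that $h_n\in\Nc$, the standing hypothesis that $R_{h_n}(\al)$ and $R_{h_n}(\be)$ both land at $x_n$ forces $w_\al(h_n)=w_\be(h_n)=x_n$. Passing to the limit $n\to\infty$ and using the continuity of $w_\al$ and $w_\be$ on $\Nc$, I obtain $z_\al=z_\be$, the desired contradiction.

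I do not foresee any serious obstacle: the argument is essentially a single application of the stability statement for smooth periodic rays landing at repelling cycles, and the only point that needs a brief check is the smoothness hypothesis in Lemma \ref{l:rep}, which reduces to the observation that connectedness of $J(h)$ prevents any (pre)critical point of $h$ from escaping. The combinatorial input --- that $\al$ and $\be$ are periodic and hence the rays land at periodic points of type repelling-or-parabolic --- is exactly what allows the dichotomy in the conclusion (same landing point vs.\ parabolic landing).
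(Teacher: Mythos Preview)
Your argument is correct and is exactly the approach the paper indicates: the corollary is presented as an immediate consequence of Lemma~\ref{l:rep}, with no proof given beyond the citation to \cite{bopt14b}. One small point you use without comment is that $J(h)$ is connected; this follows since the degree-$d$ connectedness locus is closed and $h_n\to h$, so it is harmless, but you might state it explicitly.
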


Lemma 4.7 of \cite{bopt14b} is more general and includes (with provisions)
the case when Julia sets of polynomials $h_n$ are disconnected.

The following result is purely topological  and is based on local
behavior of polynomials at points of the plane. Given a polynomial $f$
with connected Julia set $J(f)$ and a point $z\in J(f)$, denote by
$A_z$ the set of arguments of rays landing at $z$. It is known
\cite{Hu} that $A_z$ is finite. Given a finite set $X\subset \uc$, the
points $a, b, c\in X$ are said to be \emph{consecutive} if the
positively oriented arcs $(a, b)$ and $(b, c)$ are disjoint from $X$
(observe that the order of points in this definition is essential).

\begin{thm}[cf Lemma 18.1 \cite{M}]\label{t:a1}
Let $f$ be a polynomial of degree $d>1$ whose Julia set $J(f)$ is
connected. (We do not assume that $J(f)$ is locally connected.) Let
$z\in J(f)$  be a point such that $A_z\ne \0$. Then $\sigma_d|_{A_z}$
is a $k$-to-$1$ map between $A_z$ and $A_{f(z)}$, and, if $z$ is
non-critical, then $k=1$. Moreover, there are two possibilities.
\begin{enumerate}
\item The set $\si_d(A_z)=A_{f(z)}$ is a singleton.
\item Given any three consecutive points $a, b, c$ in $A_z$, the
points $\si_d(a),$ $\si_d(b)$ and $\si_d(c)$ form a triple of consecutive points
in $A_{f(z)}$.
\end{enumerate}
\end{thm}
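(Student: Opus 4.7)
The plan is to combine the B\"ottcher-coordinate functional equation with the local branched-covering structure of $f$ at $z$. First I would note that the B\"ottcher coordinate $\varphi\colon\C\sm K(f)\to\C\sm\cdisk$ satisfies $\varphi\circ f=\varphi^d$, which gives $f(R_f(\al))=R_f(d\al)$. Hence whenever $R_f(\al)$ lands at $z$, the ray $R_f(d\al)$ lands at $f(z)$, so $\sigma_d$ maps $A_z$ into $A_{f(z)}$.

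Next I would reduce the remaining assertions to a local model. Let $k\ge 1$ be the local multiplicity of $f$ at $z$, so $k=1$ exactly when $z$ is non-critical. Pick a small topological disk $D$ around $z$ such that $f|_D\colon D\to D'$ is a proper branched cover of degree $k$ branched only at $z$, onto a disk $D'$ around $f(z)$. Each ray with argument in $A_z$ enters $D$ and terminates at $z$ along a Jordan arc; because distinct such arcs are disjoint outside $\{z\}$ and together separate $D\sm\{z\}$ into cyclically arranged sectors, the assignment $\al\mapsto{}$(access direction of $R_f(\al)$ at $z$) preserves the cyclic order from $A_z\subset\R/\Z$ to the circle of directions at $z$, and the analogous statement holds at $f(z)$. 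Choosing local coordinates in which $f$ becomes $w\mapsto w^k$, the induced map on direction circles is multiplication by $k$ on $\R/\Z$.

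Granted these translations, both claims read off the local model. For the fibre count: given $\be\in A_{f(z)}$, the $d$ preimages of $\be$ under $\sigma_d$ index the $d$ rays comprising $f^{-1}(R_f(\be))$, and precisely the $k$ among them whose access direction at $z$ is sent by $w\mapsto w^k$ to the access direction of $R_f(\be)$ at $f(z)$ actually terminate at $z$; so $\sigma_d|_{A_z}$ is uniformly $k$-to-$1$. For consecutivity: multiplication by $k$ is a monotone $k$-fold self-covering of the direction circle, hence the restricted map $A_z\to A_{f(z)}$ is order-preserving and wraps $A_z$ around $A_{f(z)}$ exactly $k$ times. Either $A_{f(z)}$ reduces to a single point (case (1)), or any three consecutive elements $a,b,c$ of $A_z$ are sent to three consecutive elements $\sigma_d(a),\sigma_d(b),\sigma_d(c)$ of $A_{f(z)}$ (case (2)). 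The step I expect to cost the most care is the cyclic-order compatibility used in the previous paragraph: without assuming local connectivity of $J(f)$, one must use only the disjointness and Jordan-arc structure of the finitely many rays landing at $z$, together with the B\"ottcher uniformization of the complement of $K(f)$, to align the cyclic order of $A_z\subset\R/\Z$ with the cyclic order of access directions at $z$.
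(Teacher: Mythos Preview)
The paper does not supply its own proof of this theorem; it is stated with a reference to Lemma~18.1 of Milnor's book and accompanied only by the one-line remark that the result ``is purely topological and is based on local behavior of polynomials at points of the plane.'' Your outline is precisely that standard argument, and it is correct.

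One small sharpening of the step you flag as delicate: you do not need well-defined tangent directions (``access directions'') for the rays at $z$, which is good since without local connectivity there is no reason such directions exist. It suffices that the ray-tails landing at $z$ are pairwise disjoint Jordan arcs in a small disk $D$ about $z$, and hence cut $D\sm\{z\}$ into open \emph{sectors}; the cyclic order of $A_z\subset\R/\Z$ agrees with the cyclic order of these sectors because distinct external rays are disjoint all the way out to infinity in the B\"ottcher coordinate. Since $f\colon D\sm\{z\}\to D'\sm\{f(z)\}$ is a $k$-fold orientation-preserving covering and the ray-tails in $D$ are exactly the $f$-preimages of the ray-tails in $D'$ (this is the surjectivity/$k$-to-$1$ step you already argued), each sector in $D$ maps homeomorphically onto a sector in $D'$. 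Consecutivity of $a,b$ in $A_z$ is then the same as ``the tails of $R_f(a)$ and $R_f(b)$ bound a common sector in $D$,'' which transports directly to consecutivity of $\si_d(a),\si_d(b)$ in $A_{f(z)}$. Phrasing it this way avoids any appeal to a smooth ``direction circle'' at $z$.
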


The next result is classical and has a proof using the Schwarz-Pick
metric in \cite{DH}. Recall that the (pre)periodic external rays are
exactly those whose arguments are rational.

\begin{thm}[Proposition 2, Section II, Lecture VIII \cite{DH}]\label{t:a2}
Let $f$ be a polynomial of degree greater than one with connected Julia
set. Then all rational external rays for $f$ land, and their landing points are
(pre)periodic points eventually mapped to repelling or parabolic
periodic points.
\end{thm}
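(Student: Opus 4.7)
The plan is to follow the classical Douady--Hubbard approach via the hyperbolic metric on the basin of infinity, first treating strictly periodic rays and then reducing the pre-periodic case. First I would observe that, given $\alpha\in\mathbb Q/\Z$, some forward iterate $\sigma_d^k(\alpha)=\beta$ is strictly periodic under the $d$-tupling map. Since $J(f)$ is connected, the basin of infinity contains no critical points, so $R_f(\alpha)$ is one of the $d^k$ components of $f^{-k}(R_f(\beta))$; once the periodic case is settled, landing of $R_f(\alpha)$ at a preimage of the landing point of $R_f(\beta)$ follows by continuity of the unramified lift.

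For a periodic ray $R_f(\alpha)$ of period $n$ under $f$, I would use the B\"ottcher coordinate $\phi_f$ to identify $\C\setminus K(f)$ with $\{|w|>1\}$, conjugating $f$ to $w\mapsto w^d$. Then $f^n$ is a $d^n$-fold unramified covering of the basin, so by Schwarz--Pick it is a local isometry for the hyperbolic metric $\rho$. The ray corresponds to a radial half-line -- a hyperbolic geodesic -- which I would chop into consecutive fundamental segments $\gamma_k$ with $f^n(\gamma_k)=\gamma_{k-1}$, each of the same hyperbolic length $n\log d$ and hence bounded hyperbolic diameter. The standard bound $\rho(z)\ge (2\,\mathrm{dist}(z,J(f)))^{-1}$, combined with $\mathrm{dist}(\gamma_k,J(f))\to 0$, then forces the Euclidean diameters of the $\gamma_k$ to go to zero.

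The hard step will be showing that the accumulation set $L(\alpha)\subset J(f)$ of the ray is a single point. By continuity of $f^n$ and $f^n(\gamma_k)=\gamma_{k-1}$, the set $L(\alpha)$ is compact, connected, and $f^n$-invariant. I would locate a fixed point $z^\ast$ of $f^n$ in $L(\alpha)$ and then rule out each non-repelling, non-parabolic possibility: attracting and Siegel fixed points lie in the Fatou set and so cannot meet $L(\alpha)\subset J(f)$, while a Cremer fixed point is excluded by a classical Sullivan/Yoccoz-type argument invoking the Naishul invariance of the topological rotation number. With $z^\ast$ thus either repelling or parabolic, the local normal form -- Koenigs linearization in the repelling case, the Leau--Fatou flower in the parabolic case -- together with the invariance of the ray under $f^n$ forces the tail of the ray to converge to $z^\ast$.

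Continuity then gives $f^n(z^\ast)=z^\ast$, and the pre-periodic conclusion follows from the reduction in the first step. The main obstacle I expect is precisely the passage from the shrinking of fundamental pieces (a purely metric fact) to actual landing: the hyperbolic isometry does not by itself force the full tail of the ray to have small diameter, and the argument genuinely needs the local dynamical picture at the candidate periodic landing point together with the topological exclusion of Cremer accumulation.
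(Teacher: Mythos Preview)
The paper does not give a proof of this theorem; it is quoted from Douady--Hubbard with the one-line remark that the proof uses the Schwarz--Pick metric. Your set-up via the hyperbolic metric on the basin of infinity is exactly that approach, and the reduction from rational to periodic angles together with the shrinking of fundamental segments is correct.

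There is, however, a real gap in your landing step, and it is easier to close than you suggest. You propose to ``locate a fixed point $z^\ast$ of $f^n$ in $L(\alpha)$'' without indicating how, and then to deduce landing from local normal forms at $z^\ast$. The missing observation is that the shrinking of fundamental segments already gives $|R(t)-f^n(R(t))|\to 0$ as $t\to 0$ along the ray. Hence if $w=\lim_j R(t_j)$ is any accumulation point, then $f^n(w)=\lim_j f^n(R(t_j))=\lim_j R(t_j)=w$, so \emph{every} point of $L(\alpha)$ is fixed by $f^n$. Since $f^n$ has only finitely many fixed points and $L(\alpha)$ is connected, $L(\alpha)$ is a single point and the ray lands. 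No recourse to Koenigs or Leau--Fatou charts is needed to get landing, and in particular you never have to justify that the tail of the ray stays inside a linearizing neighbourhood.

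Once landing at a periodic point $z^\ast$ is established, the classification is also more direct than your Naishul-type argument: $z^\ast\in J(f)$ already excludes attracting and Siegel, and the Snail Lemma applied to the $f^n$-invariant ray landing at $z^\ast$ forces any neutral multiplier to equal $1$, which disposes of the Cremer possibility in one stroke.
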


The next theorem is a form of converse of Theorem~\ref{t:a2}. It is due to A. Douady.

\begin{thm}[Theorem I.A \cite{Hu}]\label{t:a3}
Let $f$ be a polynomial of degree $d>1$ whose Julia set $J(f)$ is connected.
Let $z\in J(f)$ be a repelling or parabolic periodic point. Then:
\begin{enumerate}[i.]
\item The point $z\in J(f)$ is the landing point of at least one periodic external ray.
\item Every external ray landing at $z$ is periodic.
\item All periodic external rays landing at $z$ have the same period.
\item There are finitely many external rays landing at $z$.
\end{enumerate}
\end{thm}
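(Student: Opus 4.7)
The plan is to prove the four claims in the order (iv)--(i)--(ii)--(iii). Write $g=f^p$, so that $z$ is a fixed point of $g$ with multiplier $\lambda=g'(z)$ satisfying either $|\lambda|>1$ (repelling case) or $\lambda$ a root of unity (parabolic case). The main local input at $z$ is either the Koenigs linearization, giving a biholomorphic conjugacy between $g$ and multiplication by $\lambda$ on a neighborhood of $0$, or, in the parabolic case, the Leau--Fatou flower theorem providing a decomposition into attracting and repelling petals at $z$. Globally, $f$ acts on external rays via $\sigma_d:\alpha\mapsto d\alpha$ on their arguments in $\R/\Z$.

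For the finiteness (iv), I would choose a small topological disk $U\ni z$ on which the local normal form of $g$ holds and such that the inverse branch $h$ of $g$ fixing $z$ is defined and strictly contracting on $U$. Any ray $R_f(\alpha)$ landing at $z$ enters $U$ along a connected arc. Between any two cyclically consecutive such arcs in $U$ lies a component of $U\sm J(f)$ (a ``sector'' at $z$). By comparing the B\"ottcher measurement of each sector near infinity with the contraction factor of $h$, each sector contributes a definite amount of angular mass to a fundamental annulus for $g$ near $z$, which forces only finitely many sectors, hence finitely many rays. The parabolic case is analogous, with repelling petals replacing Koenigs neighborhoods.

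For (i), let $\alpha\in\R/\Z$ be any argument whose impression meets $U$ (such $\alpha$ exists since impressions cover $J(f)$). The arc $R_f(\alpha)\cap U$ is mapped into itself by $h$ after a possible shrinking of $U$, and $h$ is a contraction; its $n$-th iterate yields a subarc of a ray $R_f(\alpha_n)$ where $d^p\alpha_{n+1}\equiv\alpha_n\pmod 1$ for a fixed choice of root. The sequence $\alpha_n$ converges to some $\alpha_\infty$ fixed by $\sigma_d^p$, hence $\sigma_d$-periodic of period dividing $p$. A continuity and compactness argument, combined with finiteness from (iv), shows that $R_f(\alpha_\infty)$ actually lands at $z$.

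For (ii) and (iii), Theorem~\ref{t:a1} says that $\sigma_d$ maps $A_{f^k(z)}$ bijectively to $A_{f^{k+1}(z)}$ (the points $f^k(z)$ are non-critical), so the finite union $A=\bigcup_{k=0}^{p-1}A_{f^k(z)}$ is $\sigma_d$-invariant; every element of $A_z$ is therefore $\sigma_d$-periodic, proving (ii). Theorem~\ref{t:a1} also gives that $\sigma_d^p|_{A_z}$ preserves the cyclic order on the finite set $A_z\subset\uc$, so it is a cyclic permutation; all elements have the same $\sigma_d^p$-period $q$, and because $\sigma_d^k(\alpha)\in A_{f^k(z)}$ can return to $A_z$ only when $p\mid k$, each element of $A_z$ has $\sigma_d$-period exactly $pq$. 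The hard part is (iv) in the non-locally connected case: prime-end arguments are unavailable, and finiteness must be extracted from the local normal form of $g$ at $z$ via the sector/fundamental-annulus comparison above.
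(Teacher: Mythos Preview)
The paper does not actually prove this theorem: it quotes it from \cite{Hu} and only remarks that ``once one proves the first claim, the others follow from it, Theorem~\ref{t:a1} and properties of the $d$-tupling map.'' Your treatment of (ii) and (iii) via Theorem~\ref{t:a1} is exactly this, and is fine. The disagreement is over which part is hard and how to get it.

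Your sketch for (i) has a genuine gap. You form a backward orbit $\alpha_{n+1}$ of $\alpha_n$ under $\sigma_d^p$, choosing at each step the preimage dictated by the inverse branch $h$ near $z$, and then assert that $\alpha_n\to\alpha_\infty$ with $\sigma_d^p(\alpha_\infty)=\alpha_\infty$. That convergence is not justified: $\sigma_d^p$ is uniformly expanding on $\uc$, so backward orbits have no reason to converge, and the geometric constraint ``the pulled-back arc stays in $U$'' only tells you the arcs in the plane shrink to $z$, not that their arguments stabilize. Even if some subsequence of $\alpha_n$ converges, the limit need not be $\sigma_d^p$-fixed, and the corresponding ray need not land at $z$. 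The actual proofs of (i) (Douady's argument in \cite{DH}, or the linearizing-coordinate argument in \cite{M}, or the Yoccoz-inequality route in \cite{Hu}) all bring in an extra ingredient---a measure or length estimate in a fundamental domain, or a hyperbolic-metric bound---that forces the existence of a genuinely periodic ray. Your sketch does not supply such an ingredient.

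Your (iv) argument is also too vague to stand on its own; ``each sector contributes a definite amount of angular mass to a fundamental annulus'' is not a step I can verify as written. More importantly, you have the dependency backwards relative to the paper and the standard literature: (iv) is \emph{easy once (i) is known}, via the order-preserving bijection $\sigma_d^p|_{A_z}$ from Theorem~\ref{t:a1} together with the expansion of $\sigma_d$ (a cyclic-order-preserving self-bijection of $A_z$ with a periodic point and uniformly expanding ambient dynamics forces $A_z$ to be finite with all points of the same period). The hard part is (i), not (iv).
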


Once one proves the first claim, the others follow from it,
Theorem~\ref{t:a1} and properties of the $d$-tupling map.
In \cite{Hu} there is another proof, using the Yoccoz inequality.

The following nice theorem will not be used in its full strength; we
add it for the sake of completeness. A \emph{wandering point} in $J(f)$
is a point whose orbit is infinite: this is the opposite of being
(pre)periodic.

\begin{thm}[\cite{kiw02}]\label{t:a4}
Let $f$ be a polynomial of degree $d>1$ with locally connected Julia
set $J(f)$. Then there exists an integer $k=k(d)$ independent of $f$,
such that every wandering point $z\in J(f)$ can be the landing point of
at most $k$ external rays.
\end{thm}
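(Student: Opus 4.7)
The plan is to study the evolution of the valence $|A_{f^n(z)}|$ along the orbit of $z$, show that it stabilizes, and then bound the stable value by a combinatorial argument on $\uc$. By Theorem~\ref{t:a1}, the map $\si_d$ sends $A_{f^n(z)}$ onto $A_{f^{n+1}(z)}$ as a $k_n$-to-$1$ map, with $k_n=1$ whenever $f^n(z)$ is non-critical. Since $z$ is wandering, the points $f^n(z)$ are pairwise distinct, so only finitely many of them can be critical. Hence $|A_{f^n(z)}|$ is non-increasing and strictly decreases only finitely many times, stabilizing at some value $k_\infty$. It therefore suffices to bound $k_\infty$ in terms of $d$.

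Fix $N$ so large that $|A_n|=k_\infty$ for all $n\ge N$, where $A_n:=A_{f^n(z)}$. Two structural facts are central. First, since each rational external ray lands at a unique point and the $f^n(z)$ are pairwise distinct, the sets $A_n$ are pairwise disjoint subsets of $\uc$. Second, by Theorem~\ref{t:a1}(2), for $n\ge N$ the restriction $\si_d|_{A_n}\colon A_n\to A_{n+1}$ is a bijection preserving the cyclic order on $\uc$. Equivalently, the convex hulls $P_n:=\mathrm{conv}(A_n)\subset\cdisk$ form an infinite family of pairwise vertex-disjoint $k_\infty$-gons, each mapped onto the next by $\si_d$ in a cyclic-order-preserving way.

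The final step bounds $k_\infty$ via an expansion/pigeonhole estimate for $\si_d$. If $k_\infty>d$, then the $k_\infty$ complementary arcs of $\uc\sm A_n$ partitioning $\uc$ must always contain arcs of length at most $1/k_\infty$; but $\si_d$ multiplies the lengths of short arcs by $d$, so after $O(\log_d k_\infty)$ iterations every initially short arc grows to macroscopic size. Combined with the preservation of cyclic order and the disjointness of the $A_n$'s (which prevents successive polygons $P_n$ from threading through one another indefinitely), this forces $k_\infty$ to be bounded by some $k(d)$ depending only on $d$; one expects $k(d)=d$.

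The main obstacle will be this last step. A naive arc-length expansion argument is insufficient because an arc of $\uc\sm A_n$ longer than $1/d$ wraps around $\uc$ under $\si_d$ and can split into many short sub-arcs, potentially creating new short arcs at each stage. Controlling this phenomenon (carried out by Kiwi in \cite{kiw02}) requires a delicate bookkeeping of how the ``gap pattern'' of $A_n$ evolves, and it uses the wandering hypothesis crucially: the disjointness of $\{A_n\}_{n\ge N}$ rules out any recurrent pattern that could sustain an unboundedly branched wandering gap, which is what yields the desired uniform bound $k=k(d)$.
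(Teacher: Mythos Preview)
The paper does not prove Theorem~\ref{t:a4}; it merely quotes it from \cite{kiw02} for completeness, remarking that it ``will not be used in its full strength.'' There is therefore no proof in the paper to compare your attempt against.

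As for your proposal itself: the preliminary reductions are correct and standard --- stabilization of the valence $|A_{f^n(z)}|$ along the wandering orbit via Theorem~\ref{t:a1}, pairwise disjointness of the $A_n$, and preservation of cyclic order once the valence has stabilized. However, the proposal is not a proof. In your final paragraph you explicitly acknowledge that the crucial combinatorial bound is ``the main obstacle,'' that the naive arc-expansion heuristic of your third paragraph is ``insufficient,'' and that the real work is ``carried out by Kiwi in \cite{kiw02}.'' You have thus reproduced the easy setup and then deferred the substantive step back to the very reference the paper cites. If the aim is a self-contained argument, you must actually execute Kiwi's analysis of wandering orbit portraits; the sketch in your third paragraph does not do this, and your guess $k(d)=d$ is not the bound Kiwi obtains.

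One small correction: your justification for the disjointness of the $A_n$ invokes ``each rational external ray lands at a unique point,'' but the angles in $A_n$ are irrational (rational rays land at preperiodic points by Theorem~\ref{t:a2}, and $z$ is wandering). The correct reason is simply that $J(f)$ is locally connected, so every ray lands and distinct landing points have disjoint angle sets.
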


\subsection{Full lamination}

For a (finite or infinite) set $A\subset \uc$, denote by
$\ch(A)$ its (closed Euclidian) convex hull.
A \emph{chord} $\ol{ab}$ between any two points $a,b\in\uc$ is
$\ch(\{a,b\})$ and contains the endpoints $a$ and $b$.
If $b=a$ the chord is called \emph{degenerate}.
Consider a closed set $A\subset \uc$ and its convex hull $\ch(A)$.
An \emph{edge} of
$\ch(A)$ is a closed straight segment $I$ connecting two points of
$\uc$ such that $I\subset \bd(\ch(A))$. Define the map $\si_d:\uc\to
\uc, \uc\subset \C,$ by $\si_d(s)=s^d$.
Then the
($\si_d$-)\emph{image} of a chord $\ol{ab}$ is by definition the chord
$\ol{\sigma_d(a)\sigma_d(b)}$. A ($\si_d$-)\emph{critical chord}
is a \emph{non-degenerate} chord whose endpoints have the same
$\si_d$-image.

Consider a locally connected Julia set $J(f)$ of a polynomial $f$ of
degree $d$. We can associate to $f$ an equivalence relation
$\sim_f$ on $\uc$ as follows. Two points $\al$ and $\be$ in $\uc$ are
$\sim_f$-equivalent if $R_f(\al)$ and $R_f(\be)$ land at the same
point. Then $J(f)$ is homeomorphic to $\uc/\sim_f$. By Theorems
\ref{t:a3} and \ref{t:a4}, any $\sim_f$-class is finite.

Recall that an equivalence relation $\sim$, or any binary relation, on
a set $X$ can be modeled by its \emph{graph}, the subset of pairs $(a,
b)\in X\times X$ such that $a\sim b$. If $f$ has a locally connected
Julia set, then all external rays land, and the landing point depends
continuously on the external angle, thus the graph of $\sim_f$ is a
closed subset of $\uc\times\uc$.

As mentioned in Section \ref{s:detailed}, the set $\mathcal C$ of all possible
degenerate and non-degenerate chords is naturally topologized by
associating to a chord $\ol{ab}$ the pair $\{a,b\}$ from
the symmetric square of $\uc$. Lemma~\ref{l:a6} follows from
the fact that the graph of $\sim_f$ is closed as well as the fact that
$\sim_f$-classes are finite and pairwise disjoint (we leave the proof
to the reader).

\begin{lem}\label{l:a6}
If $f$ has a locally connected Julia set, then $\lam_f$ is a closed
subset of the space of all chords of the unit disk with the above
topology.
\end{lem}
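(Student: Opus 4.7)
The plan is to pass to a convergent subsequence, use closedness of the graph of $\sim_f$ to identify the limiting endpoints as equivalent, and then rule out non-edge limits via a crossing argument that exploits finiteness of the class.

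After relabeling and passing to a subsequence, suppose $\overline{\al_n\be_n}\in\lam_f$ with $\al_n\to\al$ and $\be_n\to\be$. Local connectivity of $J(f)$ makes the landing map $\pi\colon\uc\to J(f)$ continuous, so the graph of $\sim_f$ is closed; hence $\al\sim_f\be$. If $\al=\be$ then the limit is a degenerate leaf, which belongs to $\lam_f$ by convention. From here on assume $\al\ne\be$.

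Now extract a further subsequence so that either $\al_n=\al$ for all $n$, or every $\al_n$ lies strictly on one fixed open side of $\al$ in $\uc$. In the first case $\be_n\in[\al]_{\sim_f}$; since $[\al]_{\sim_f}$ is finite and $\be_n\to\be$, we have $\be_n=\be$ for large $n$, whence $\overline{\al\be}\in\lam_f$. In the second case, assume for contradiction that $\overline{\al\be}$ is not an edge of $\ch([\al]_{\sim_f})$: both open arcs of $\uc\sm\{\al,\be\}$ then meet $[\al]_{\sim_f}$. Using finiteness, choose $\ga\in[\al]_{\sim_f}$ to be the class member closest to $\al$ on the open arc on which the $\al_n$ accumulate. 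Then $\overline{\al\ga}$ is an edge of $\ch([\al]_{\sim_f})$, hence a leaf of $\lam_f$. For $n$ large, $\al_n$ lies strictly between $\al$ and $\ga$ on that arc, while $\be_n\to\be$ forces $\be_n$ into the complementary arc past $\ga$. The four points then appear on $\uc$ in cyclic order $\al,\al_n,\ga,\be_n$, which alternates between the two chords; hence $\overline{\al\ga}$ and $\overline{\al_n\be_n}$ cross, contradicting the standard fact that leaves of $\lam_f$ are pairwise non-crossing.

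The main obstacle is the non-crossing property just invoked, which is not literally among the three facts the authors cite when they leave the proof to the reader. It is however a classical consequence of the disjointness of $\sim_f$-classes and the topology of external rays: if two pairs of rational rays with distinct landing points had interlinked angles, the union of one landing point together with its two rays and the point at infinity would separate the plane, placing the two rays of the other pair in different components and obstructing a common landing. With this in hand, the combinatorial argument above makes closedness of $\lam_f$ essentially immediate.
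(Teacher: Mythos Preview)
Your proof is correct and follows essentially the route the paper indicates: pass to a limit using closedness of the graph of $\sim_f$, then use finiteness and unlinkedness of classes to see the limiting chord is an edge. One small clarification: the paper's phrase ``$\sim_f$-classes are \dots\ pairwise disjoint'' is almost certainly shorthand for condition (E2) of Definition~\ref{d:lam}, i.e., that the \emph{convex hulls} of distinct classes are disjoint --- which is exactly the non-crossing property you invoke. So the fact you worry about is in fact among those the authors cite; your separate justification via external rays is fine but not needed.
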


Moreover, $\si_d:\uc\to\uc$ descends to a map
$\si_d/\sim_f:\uc/\sim_f\to \uc/\sim_f$, and there exists a
homeomorphism $\phi:J(f)\to\uc/\sim_f$ conjugating $P|_{J(f)}$ with
$\si_d/\sim_f$. This shows that the equivalence relation $\sim_f$ gives
a precise model of the dynamics of $f|_{J(f)}$. However such a
precise model can be obtained only in the locally connected case.

With every $\sim_f$-class $G'$, we associate its convex hull
$G=\ch(G')$. The \emph{geodesic lamination $\lam_{\sim_f}=\lam_f$} is
defined as the set of edges of all such polygons $G$ together with all
singletons in $\uc$. We refer to $\lam_f$ as the \emph{(full) geodesic
lamination associated with $f$.} Elements of $\lam_f$ are called
\emph{leaves}. A leaf is \emph{degenerate} if it coincides with a point
in $\uc$; otherwise it is \emph{non-degenerate}. If $\ell = \ol{ab}$ is
a leaf, then, by Theorem~\ref{t:a1}, the chord
$\ol{\sigma_d(a)\sigma_d(b)}$ is again a (possibly degenerate) leaf. It
is denoted $\sigma_d(\ell)$. A \emph{critical leaf} is a leaf that is a
critical chord.

A \emph{gap} of $\lam_f$ is defined as the closure of a component of
$\disk\sm\bigcup\lam_f$. For any gap $G$ of $\lam_f$, we define $G'$ as
$G\cap\uc$ and $\si_d(G)$ as $\ch(\si_d(G'))$.  A gap $G$ is said to be
\emph{invariant} if $\si_d(G)=G$.
If $\lam_f$ has a gap $G$ such that $G'$ is infinite, then the union of the convex hulls of all $\sim_f$-classes does not equal $\cdisk$.

An equivalence relation $\sim$ on $\uc$, similar to $\sim_f$ above, can
be introduced with no references to polynomials.

\begin{dfn}[Laminational equivalence relations]\label{d:lam}

An equivalence relation $\sim$ on the unit circle $\uc$ is said to be
\emph{laminational} if:

\noindent (E1) the graph of $\sim$ is a closed subset in $\uc \times
\uc$;

\noindent (E2) convex hulls of distinct equivalence classes are
disjoint;

\noindent (E3) each equivalence class of $\sim$ is finite.
\end{dfn}

By an \emph{edge} of a $\sim$-class we mean an edge of its convex hull.

\begin{dfn}[Laminational equivalences and dynamics]\label{d:si-inv-lam}
A laminational equivalence relation $\sim$ is ($\si_d$-){\em
in\-va\-riant} if:

\noindent (D1) $\sim$ is {\em forward invariant}: for a class
$\mathbf{g}$, the set $\si_d(\mathbf{g})$ is a class too;

\noindent (D2) for any $\sim$-class $\mathbf{g}$, the map
$\tau=\si_d|_{\mathbf{g}}$ extends to $\uc$ as an orientation
preserving covering map $\hat \tau$ such that $\mathbf{g}$ is the full
preimage of $\tau(\mathbf{g})$ under the covering map $\hat \tau$.

\end{dfn}

To each laminational equivalence relation $\sim$ we associate the
corresponding \emph{geodesic lamination $\lam_\sim$} defined as the
collection of all edges of convex hulls of $\sim$-classes together with
all points of $\uc$. The terminology introduced for laminations
$\lam_f$ applies to laminations $\lam_\sim$ too. Abusing the language,
we will call the lamination all of whose leaves are singletons in $\uc$
the \emph{empty lamination}.

\subsection{Invariant gaps of cubic laminations}
Let $\lam_\sim$ be a cubic lamination. The \emph{degree} of a gap $G$
of $\lam_\sim$ is defined as the maximal number of disjoint critical
chords that can fit in $G$ and that are not on the boundary of $G$,
plus $1$, except for the case when $G$ is a triangle with critical
edges in which case the degree of $G$ is $3$. Recall that chords
include their endpoints, so by disjoint critical chords we mean chords
whose endpoints are distinct. Degree 2 (respectively, 3) gaps are
said to be \emph{quadratic} (respectively, \emph{cubic}).

By \cite{bopt14}, a quadratic $\si_3$-invariant gap $G$ has a unique
longest edge $M(G)$ called the \emph{major} (of $G$).
The major $M(G)$ can be either critical (then $G$ is said to be of
\emph{regular critical type}) or periodic (then $G$ is said to be of
\emph{periodic} type). For every edge $\ell=\ol{ab}$ of $G$, let
$H_\ell(G)$ be the arc of the unit circle $\uc$ with endpoints $a$ and $b$
and no points of $G$ in $H_\ell(G)$. Then the major
$M(G)$ can be singled out by the fact that the length of $H_{M(G)}(G)$
is greater than or equal to $1/3$ (here, the length of a circle arc is
normalized so that the total length of $\uc$ is equal to $1$).
We may summarize the above in the following theorem (recall that the
sets $\Fc_{at}$ and $\Pc^\circ$ were defined in Section \ref{s:detailed}; for
each $g\in \Fc_{at}$, we write $A(g)$ for the immediate basin of
attraction of $0$).

\begin{thm}[\cite{bopt16a}]\label{t:major}
Consider a polynomial $f\in \Fc_{at}\sm \Pc^\circ$ with locally
connected Julia set $J(f)$. Then the geodesic lamination $\lam_f$ has a
quadratic invariant gap $G$, and there are two possibilities.

\begin{enumerate}

\item The major $M(G)$ of $G$ is critical, the corresponding
    critical point of $f$ belongs to $\bd(A(f))$, and periodic
    cutpoints of $J(f)$ do not exist.

\item The major $M(G)$ of $G$ is periodic, and the corresponding
    point of $J(f)$ is a repelling or parabolic periodic
    cutpoint of $J(f)$.

\end{enumerate}

\end{thm}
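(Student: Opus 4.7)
My plan is to extract from the immediate basin $A(f)$ a canonical $\si_3$-invariant quadratic gap $G$ of $\lam_f$, invoke the structural dichotomy for such gaps from \cite{bopt14}, and then translate ``critical major'' versus ``periodic major'' into the two dynamical alternatives stated.

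Since $J(f)$ is locally connected, every bounded Fatou component is a Jordan domain, so $\bd(A(f))$ is a Jordan curve and the restriction $f|_{A(f)}$ is a proper holomorphic self-map of $A(f)$ whose degree $d_A$ equals, by Riemann--Hurwitz, one plus the number of critical points of $f$ in $A(f)$. Since $0$ is genuinely attracting, $d_A\ge 2$, and since $f\notin\Pc^\circ$ the basin contains at most one critical point, so $d_A=2$. Taking the set $E\subset\uc$ of external arguments of rays landing on $\bd(A(f))$ and grouping such angles by $\sim_f$, one obtains (via the semiconjugacy of Section~\ref{ss:pcuts}) a canonical $\si_3$-invariant quadratic gap $G$ of $\lam_f$ associated to $A(f)$. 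By \cite{bopt14}, $G$ has a unique longest edge $M(G)$, which is either critical or periodic (a critical leaf cannot be periodic, since its $\si_3$-image is degenerate).

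Suppose $M(G)=\ol{ab}$ is periodic of some $\si_3$-period $k$. Then $R_f(a)$ and $R_f(b)$ land at a common point $w\in\bd(A(f))$ of $f$-period dividing $k$; since two distinct rays land at $w$, it is a cutpoint, and Lemma~\ref{l:land-ratio} identifies it as repelling or parabolic, yielding possibility (2). Suppose instead $M(G)=\ol{ab}$ is critical, i.e.\ $\si_3(a)=\si_3(b)$. Then $R_f(a)$ and $R_f(b)$ land at a common $p\in\bd(A(f))$ and $f$ is locally $2$-to-$1$ at $p$, so $p$ is the critical point of $f$ corresponding to $M(G)$ and it lies on $\bd(A(f))$, as asserted. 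This $p$ cannot be periodic, for a periodic critical point would be super-attracting and thus contained in a Fatou component rather than in $J(f)$.

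The remaining and most delicate task in the critical-major case is to forbid \emph{any} periodic cutpoint of $J(f)$. A periodic cutpoint $z$ would produce a non-degenerate $\si_3$-periodic $\sim_f$-class whose convex hull is a polygon of $\lam_f$ with periodic edges. The regular-critical structure imposed on $\lam_f$ by $M(G)$ being critical forces every gap of $\lam_f$ distinct from $G$ and its iterated preimages to lie on the side of $M(G)$ opposite $G$, and the critical collapse of $M(G)$ under $\si_3$ eventually pushes every forward iterate of such a periodic polygon into $G$; meanwhile the dynamics of $\si_3$ on $G\cap\uc$ is semiconjugate to the angle-doubling map on the Jordan curve $\bd(A(f))$, which identifies distinct angles only along the strictly preperiodic backward orbit of $M(G)$. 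This prevents any non-degenerate periodic $\sim_f$-class from existing, contradicting the assumption. The main obstacle is precisely this propagation step: carrying it out rigorously requires the combinatorial apparatus developed in \cite{bopt14, bopt16a}.
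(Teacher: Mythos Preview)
The paper does not prove this theorem: it is quoted verbatim from \cite{bopt16a} and used as a black box (the text moves immediately to Corollary~\ref{c:attracapt}). So there is no ``paper's own proof'' to compare against; your write-up is an attempted independent proof of a cited result.

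As a sketch, your outline is broadly in the spirit of how such results are established in \cite{bopt14,bopt16a}: identify $G$ as the gap of $\lam_f$ corresponding to $A(f)$, compute its degree via Riemann--Hurwitz, invoke the structural dichotomy (critical vs.\ periodic major), and read off the dynamical consequences. The periodic-major case is handled correctly. In the critical-major case, the identification of the landing point of $M(G)$ with a critical point on $\bd(A(f))$ is fine.

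However, your final paragraph is not a proof but an admission that one is needed. The claim that ``every gap of $\lam_f$ distinct from $G$ and its iterated preimages lies on the side of $M(G)$ opposite $G$'' is not justified and, as stated, is imprecise: the complement of $G$ in $\cdisk$ has infinitely many components (one behind each edge of $G$), and you would need to argue that any periodic polygon eventually lands in the component behind $M(G)$ and then derive a contradiction from the critical collapse. The subsequent appeal to a semiconjugacy with angle-doubling on $\bd(A(f))$ does not by itself exclude periodic cutpoints of $J(f)$ lying \emph{outside} $\ol{A(f)}$, which is exactly what must be ruled out. You yourself flag this as ``the main obstacle'' requiring the machinery of \cite{bopt16a}; that is accurate, and it means the argument as written is incomplete. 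If you want a self-contained proof, you need to supply this combinatorial propagation argument in full; otherwise, citing \cite{bopt16a} directly (as the paper does) is the honest route.
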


Corollary~\ref{c:attracapt} easily follows.

\begin{cor}\label{c:attracapt} Suppose that $f\in \Fc_\la$, where $|\la|<1$, is
an IA-capture polynomial. Then $J(f)$ is locally connected, the
geodesic lamination $\lam_f$ has a quadratic invariant gap $G$ with
periodic major $M(G)$, the Julia set $J(f)$ contains a periodic
repelling cutpoint associated to $M(G)$, and 
$f\in
\Fc_\la\sm \Pc$.
\end{cor}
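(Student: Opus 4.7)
The plan is to derive Corollary~\ref{c:attracapt} as a direct consequence of Theorem~\ref{t:major}, using the definition of an IA-capture polynomial together with Theorem~\ref{t:prophd}. I would first observe that if $f$ is an IA-capture polynomial in $\Fc_\la$ with $|\la|<1$, then $f$ is by definition hyperbolic: both critical points lie in the basin of attraction of the attracting fixed point $0$, with $\om_1(f)\in A(f)$ and $\om_2(f)$ eventually landing in $A(f)$ without lying in $A(f)$. Hyperbolicity combined with connectedness of $J(f)$ implies that $J(f)$ is locally connected, so the full geodesic lamination $\lam_f$ is well-defined.

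Next I would check that $f\in\Fc_{at}\sm\Pc^\circ$. The containment $f\in\Fc_{at}$ is part of the definition of IA-capture. The fact that $f\notin\Pc^\circ$ follows because for $g\in\Pc^\circ$ both critical points must lie in $A(g)$, whereas $\om_2(f)\notin A(f)$ by assumption. This puts $f$ in the hypotheses of Theorem~\ref{t:major}, so $\lam_f$ has a quadratic invariant gap $G$, and either (1) $M(G)$ is critical with the associated critical point lying in $\bd(A(f))\subset J(f)$, or (2) $M(G)$ is periodic with the associated point a repelling or parabolic periodic cutpoint of $J(f)$.

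Case (1) will then be ruled out: by the definition of IA-capture, \emph{no} critical point of $f$ lies in $J(f)$, so the critical point associated to $M(G)$ cannot belong to $\bd(A(f))$. Therefore case (2) holds, giving a periodic major $M(G)$ and an associated periodic cutpoint of $J(f)$ that is either repelling or parabolic. Since $f$ is hyperbolic there are no parabolic periodic points, so the cutpoint must be repelling, which yields the first three assertions.

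Finally, I would invoke Theorem~\ref{t:prophd}: every polynomial in $\Pc=\ol{\Pc^\circ}$ has no repelling periodic cutpoints in its Julia set. Since we have just exhibited a repelling periodic cutpoint for $f$, it follows that $f\notin\Pc$, and hence $f\in\Fc_\la\sm\Pc$, completing the proof. I do not foresee a genuine obstacle here: the corollary is essentially a bookkeeping consequence of Theorem~\ref{t:major} together with the no-critical-points-in-$J(f)$ feature of IA-captures; the only step requiring any care is checking that case (1) of Theorem~\ref{t:major} is really incompatible with the IA-capture hypothesis, which reduces to the observation that a critical point on $\bd(A(f))$ would lie in $J(f)$.
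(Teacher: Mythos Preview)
Your argument is correct and matches the paper's proof almost verbatim through the first three assertions: hyperbolicity gives local connectivity, the IA-capture hypothesis forces $f\in\Fc_{at}\sm\Pc^\circ$, Theorem~\ref{t:major} applies, case~(1) is excluded because no critical point lies in $J(f)$, and the resulting periodic cutpoint is repelling since a hyperbolic map has no parabolic cycles.

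The only difference is in the last step. You deduce $f\notin\Pc$ directly from Theorem~\ref{t:prophd}, which already asserts that polynomials in $\Pc$ have no repelling periodic cutpoints. The paper instead invokes Lemma~\ref{l:rep}: the repelling periodic cutpoint persists under small perturbations, so nearby polynomials also have cutpoints and hence cannot lie in $\Pc^\circ$, which prevents $f$ from lying in $\ol{\Pc^\circ}=\Pc$. Your route is shorter and uses a result already stated earlier in the paper; the paper's route is more self-contained in that it re-derives the needed consequence from the ray-stability lemma rather than citing the packaged statement of Theorem~\ref{t:prophd}. Both are entirely valid.
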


\begin{proof} Since $f$ is hyperbolic, $J(f)$ is locally connected
so that Theorem~\ref{t:major} applies to $f$. Evidently, neither
critical point of $f$ belongs to $J(f)$. Hence case (1) of
Theorem~\ref{t:major} does not apply to $f$ while case (2) does apply.
The cutpoint cannot be parabolic for otherwise $f$ would not be
hyperbolic. This proves all claims of the corollary except for the last
one. To see that $f\in \Fc_\la\sm \Pc$ it remains to apply
Lemma~\ref{l:rep} which implies that small perturbations of $f$ will
have a periodic cutpoint in their Julia sets and, therefore, cannot
belong to $\Pc^\circ$.
\end{proof}

\subsection{Rational lamination}

A less informative but more universal concept is that of the
\emph{rational lamination $\lam^r_f$} associated to a polynomial $f$.
It was introduced by Kiwi (see \cite{kiw97, kiw01, kiw04}) and is based
upon the work of Goldberg and Milnor \cite{GM}.

\begin{dfn}[\cite{kiw97, kiw01, kiw04}]\label{d:ratiol}
Let $f$ be a polynomial of degree greater than $1$ with connected Julia set. Consider all its
external rays with rational arguments. If rays with distinct arguments
land at the same point in the plane, declare them equivalent so that
classes of equivalence are subsets of $\uc$. Then the convex hulls of
the thus defined classes of equivalence are pairwise disjoint. The set
formed by the edges of the convex hulls of all classes is called the
\emph{rational (geodesic) lamination} of $f$ and is denoted by
$\lam^r_f$.
\end{dfn}

Note that in the case $J(f)$ is locally connected, the equivalence
relation above is just the restriction of the equivalence relation
$\sim_f$ to the set of rational arguments.
Images of leaves of $\lam^r_f$ are leaves of $\lam^r_f$.
While the rational lamination $\lam^r_f$ contains valuable information about the dynamics of $f$,
 it does not define the polynomial, as we already noted in Section \ref{s:detailed}.

\begin{lem}\label{l:a5}
Let $f$ be a polynomial of degree $d\ge 2$ with connected Julia set. If
a chord is a limit of leaves $\ell_i\in \lam^r_f$ and one of its
endpoints is periodic, then its other endpoint is periodic of the same
period.
\end{lem}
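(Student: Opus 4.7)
The plan is to show that the external ray $R_f(b)$ lands at the same periodic point of $J(f)$ as $R_f(a)$; then by Theorem~\ref{t:a3}(iii), $b$ has the same period $n$ as $a$.

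Write $\ell_i = \ol{a_i b_i} \in \lam^r_f$ with $a_i \to a$ and $b_i \to b$, and let $z_i \in J(f)$ denote the common landing point of $R_f(a_i)$ and $R_f(b_i)$. Since $a$ has period $n$ under $\si_d$, Theorem~\ref{t:a2} implies that $R_f(a)$ lands at a periodic point $z_0 \in J(f)$ that is either repelling or parabolic, and Theorem~\ref{t:a3} ensures that the set $A_{z_0}$ of angles whose rays land at $z_0$ is finite and consists entirely of angles of period exactly $n$. After passing to a subsequence, assume $z_i \to z^* \in J(f)$; the heart of the argument is showing $z^* = z_0$.

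For this I would appeal to a continuity property of the landing map at the periodic angle $a$. The rays $R_f(a_i)$ converge to $R_f(a)$ uniformly on compact subsets of $\C \sm K(f)$ by continuity in the B\"ottcher parametrization. Using the local normal form of $f$ at $z_0$---linearization when $z_0$ is repelling, Fatou coordinates when parabolic---one shows that any accumulation point of $(z_i)$ different from $z_0$ would force the tails of $R_f(a_i)$ to veer away from $R_f(a)$ as they approach $J(f)$, contradicting the uniform convergence on compacta. Hence $z^* = z_0$. The analogous argument, applied to the rays $R_f(b_i)$ (which land at $z_i \to z_0$ and converge to $R_f(b)$ on compacta) combined with the finiteness of rays at $z_0$, forces $R_f(b)$ to land at $z_0$, so $b \in A_{z_0}$ and the conclusion follows from Theorem~\ref{t:a3}(iii).

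The main obstacle is establishing $z^* = z_0$: when $J(f)$ is locally connected this is immediate from Carath\'eodory's theorem, but in general one must carry out careful local analysis at the possibly parabolic periodic point $z_0$, and similarly verify that the limiting ray $R_f(b)$ (whose landing is not a priori guaranteed since $b$ may be irrational) actually terminates at $z_0$ rather than merely accumulating there.
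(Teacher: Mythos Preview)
Your approach has a genuine gap at the step you yourself flag as the ``main obstacle.'' The assertion that the landing points $z_i$ of $R_f(a_i)$ must converge to the landing point $z_0$ of $R_f(a)$ is \emph{not} a consequence of uniform convergence of rays on compacta of $\C\setminus K(f)$. Compact subsets of $\C\setminus K(f)$ are sets on which the Green potential is bounded away from zero; the ``tails'' of the rays near $J(f)$ lie precisely outside every such compact, so uniform convergence there imposes no constraint on where the rays land. In fact, when $J(f)$ is not locally connected the landing map $\theta\mapsto$ (landing point of $R_f(\theta)$) is in general discontinuous even at periodic angles, so no amount of local analysis at $z_0$ will rescue the argument. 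The second half of the plan (concluding that $R_f(b)$ lands at $z_0$ from $z_i\to z_0$) is equally problematic for the same reason, compounded by the fact that $b$ is not yet known to be rational, so $R_f(b)$ need not land at all.

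The paper's proof avoids the landing map entirely and works purely on the circle: assuming $a$ has period $m$ and $b$ does not, one uses only that (i) leaves of $\lam^r_f$ are pairwise unlinked, (ii) each point of $\uc$ lies on at most two non-degenerate leaves, and (iii) $\si_d^m$ is locally expanding and orientation-preserving near $a$. From these one forces, for suitable $i\ll j$, the image leaf $\si_d^m(\ell_j)$ to cross $\ell_i$, contradicting unlinkedness. This combinatorial argument does not require any continuity of landing and works regardless of whether $J(f)$ is locally connected. (In the locally connected case your idea does go through, since then $\sim_f$ is closed and $a\sim_f b$ follows immediately; the paper notes this alternative after its proof.)
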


\begin{proof}
We may assume that all leaves from the lemma are non-dege\-ne\-rate. By
Theorem \ref{t:a3}, we may assume that $\ell_i\in \lam_f^r$,
that $\ell_i\neq \ol{ab}$ for any $i$, and that $\ell_i\to
\ol{ab}$. We may also assume that $a$ is of period $m$ and $b$ is
either non-periodic or of period greater than $m$; then $\si_d^m(b)\ne
b$, and we may assume that $a\le \si_d^m(b)<b$ in the sense of the counterclockwise order on $\uc$.
A point in $\uc$ belongs to at most two non-degenerate leaves, hence we may assume that
$\ell_i=\ol{x_iy_i}$ with $x_i\to a, y_i\to b$ and $x_i \neq a$,
$y_i\neq b$. Then $a<x_i<b$, as otherwise $\si_d^m(\ell_i)$ crosses
$\ell_i$ as $\si_d^m$ at $a$ repels and does not change orientation.
Since distinct leaves $\ell_i$ do not cross, then $a<x_i<y_i<b$.
However, then, if $1\ll i\ll j$, the leaf $\si_d^m(\ell_j)$ crosses
$\ell_i$, a contradiction.
\end{proof}

If the Julia set of $f$ is locally connected, there is an alternative proof of
Lemma \ref{l:a5}: the endpoints of $\ell_i$ are equivalent under $\sim_f$,
and thus $a\sim_f b$ since $\sim_f$ is closed.
The claim then follows from Theorem \ref{t:a3}.

\begin{dfn}[\cite{bmov13}]\label{d:siblinv}
A collection of chords $\lam$ is \emph{sibling $\si_d$-invariant} provided
that:
\begin{enumerate}
\item for each $\ell\in\lam$, we have $\si_d(\ell)\in\lam$,
\item \label{2}for each $\ell\in\lam$ there exists $\ell_1\in\lam$
    so that $\si_d(\ell_1)=\ell$.
\item \label{3} for each $\ell\in\lam$ so that $\si_d(\ell)$
    is a non-degenerate leaf, there exist $d$ \textbf{disjoint} leaves
    $\ell_1$, $\dots$, $\ell_d$ in $\lam$ so that $\ell=\ell_1$ and
    $\si_d(\ell_i)=\si_d(\ell)$ for all $i=1$, $\dots$, $d$.
\end{enumerate}
\end{dfn}

We will use the following lemma.

\begin{lem}[\cite{kiw97,kiw01}]\label{l:ratio-sibl}
For a polynomial $f$ with connected Julia set the rational lamination $\lam^r_f$ is sibling invariant.
\end{lem}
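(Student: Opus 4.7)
The plan is to verify each of the three conditions of Definition~\ref{d:siblinv} directly from the definition of $\lam^r_f$, using Theorem~\ref{t:a1} as the main structural input. The key fact is that for every point $z\in J(f)$ that is the landing point of at least one rational ray, the restriction $\si_d|_{A_z}$ is an orientation-preserving cyclic $k$-fold covering of $A_{f(z)}$, where $k$ is the local degree of $f$ at $z$; in particular it maps consecutive points to consecutive points.

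For condition~(1), I would take a non-degenerate leaf $\ell=\ol{ab}\in\lam^r_f$, let $z$ be the common landing point of $R_f(a)$ and $R_f(b)$, and observe that $R_f(\si_d(a))$ and $R_f(\si_d(b))$ both land at $f(z)$. Because $\ell$ is an edge of $\ch(A_z)$ the points $a,b$ are consecutive in $A_z$, so by Theorem~\ref{t:a1}(2) the images $\si_d(a),\si_d(b)$ are consecutive in $A_{f(z)}$ (or they coincide, giving a degenerate leaf). In either case $\si_d(\ell)\in\lam^r_f$. For condition~(2), take any $\ell=\ol{cd}\in\lam^r_f$ with landing point $z$, and let $z_1$ be any $f$-preimage of $z$. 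Standard ray-lifting shows that $k_1$ preimages of $R_f(c)$ and $k_1$ preimages of $R_f(d)$ land at $z_1$, where $k_1$ is the local degree of $f$ at $z_1$. Among these, applying Theorem~\ref{t:a1}(2) to $\si_d|_{A_{z_1}}$, one can choose a preimage $c'$ of $c$ and a preimage $d'$ of $d$ that are consecutive in $A_{z_1}$, so that $\ol{c'd'}$ is an edge of $\ch(A_{z_1})$ and hence a leaf of $\lam^r_f$ mapping to $\ell$.

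For condition~(3), assume $\si_d(\ell)=\ol{cd}$ is non-degenerate, let $z$ be its landing point, and enumerate the $f$-preimages of $z$ as $z_1,\dots,z_m$ with local degrees $k_1,\dots,k_m$, so that $\sum k_i=d$. At each $z_i$ the map $\si_d|_{A_{z_i}}\to A_z$ is a $k_i$-fold orientation-preserving cyclic covering, so the cyclic pattern of $A_z$ repeats $k_i$ times along $A_{z_i}$. Since $c,d$ are consecutive in $A_z$, this produces exactly $k_i$ pairs of consecutive points $(c',d')$ in $A_{z_i}$ with $\si_d(c')=c$ and $\si_d(d')=d$; these are $k_i$ edges of $\ch(A_{z_i})$, each projecting to $\ol{cd}$. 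Within a single $A_{z_i}$ the chosen edges are pairwise disjoint because each preimage of $c$ participates in only one such consecutive pair (as its ``$d$-side'' neighbor); edges coming from different $z_i$'s are disjoint because the $\sim_r$-classes at distinct landing points are disjoint, so their convex hulls are disjoint. The one exception is the case $|A_z|=2$, where $\ch(A_{z_i})$ is a $2k_i$-gon with every edge projecting to $\ol{cd}$; here I would select a perfect matching consisting of $k_i$ alternating edges, still getting $k_i$ pairwise disjoint preimages per $z_i$. In total this yields $\sum k_i=d$ pairwise disjoint siblings, and after relabeling we may arrange $\ell_1=\ell$.

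The main technical point, and the step I expect to require the most care, is condition~(3): one must handle the critical and multi-class cases uniformly, keep track of the count $\sum k_i=d$, and verify the \emph{disjointness} of the chosen siblings both inside a single $A_{z_i}$ (where several preimage edges may a priori share an endpoint if one na\"ively picks all of them, as happens when $|A_z|=2$) and across different $z_i$'s. Once the covering description of $\si_d|_{A_{z_i}}$ from Theorem~\ref{t:a1} is in hand, these issues are bookkeeping rather than substantive, but they are the only place where the argument is non-automatic.
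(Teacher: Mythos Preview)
Your proposal is correct and follows essentially the same route as the paper's own argument: both verify conditions (1)--(3) of Definition~\ref{d:siblinv} by analyzing the full preimage $\{z_1,\dots,z_m\}$ of the landing point $z$ and counting, at each $z_i$, the edges of $\ch(A_{z_i})$ that project to the given edge of $\ch(A_z)$, using that $\si_d|_{A_{z_i}}$ is a $k_i$-to-$1$ cyclic covering (Theorem~\ref{t:a1}). The paper's version is a brief sketch that declares (1) and (2) ``straightforward'' and for (3) simply asserts that even when some $z_i$ is critical one can find $k_i$ pairwise disjoint edges of $\ch(A_{z_i})$ mapping to $\ell$; your write-up supplies exactly those details (including the $|A_z|=2$ matching subtlety), so it is a fleshed-out form of the same proof rather than a different one.
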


\begin{proof}[Sketch of a proof]
Conditions (1) and (2) from Definition \ref{d:siblinv} are straightforward.
Assume that $\{z_1, \dots, z_k\}\subset J(f)$ is the
full preimage of a point $z\in J(f)$. Denote by $A_1$, $\dots$, $A_k$ and
$A$ the sets of arguments of external rays landing at $z_1$, $\dots$, $z_k$ and
$z$, respectively.
Condition (3) from Definition \ref{d:siblinv} is immediate for an
edge $\ell$ of the convex hull $\ch(A)$ except when some $z_i$'s are
critical. However then, too, one can find $r_i$ pairwise disjoint edges
of the convex hull $\ch(A_i)$ of $A_i$ where $r_i$ is the order of
$z_i$ so that each edge from this collection maps to $\ell$. This
completes the proof.
\end{proof}

The next lemma studies properties of infinite gaps of $\ol{\lam^r_f}$.

\begin{lem}\label{l:inf-lpr}
Edges of infinite gaps of $\ol{\lam^r_f}$ are (pre)periodic.
\end{lem}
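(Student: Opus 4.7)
My plan is to combine dynamical invariance of $\ol{\lam^r_f}$, eventual periodicity of the orbit of an infinite gap under the induced dynamics, and Lemma \ref{l:a5}.

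First, I would extend the sibling $\si_d$-invariance of $\lam^r_f$ (Lemma \ref{l:ratio-sibl}) to its closure. If $\ell_n \to \ell$ with $\ell_n \in \lam^r_f$, then $\si_d(\ell_n) \to \si_d(\ell)$ gives $\si_d(\ell) \in \ol{\lam^r_f}$; a compactness argument on the $d$ disjoint siblings of each $\ell_n$ produces $d$ pairwise disjoint siblings of $\ell$ in $\ol{\lam^r_f}$. Hence $\ol{\lam^r_f}$ is a closed sibling $\si_d$-invariant lamination, and for any gap $G$, the set $\ch(\si_d(G \cap \uc))$ is either a (possibly degenerate) leaf or another gap of $\ol{\lam^r_f}$, which yields a forward dynamics on the set of gaps.

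Second, I would show that an infinite gap $G$ has an eventually periodic orbit under this dynamics. The total critical content of $\ol{\lam^r_f}$ (summed contributions from critical gaps and critical leaves) is bounded by $d-1$, so all but finitely many iterates $\si_d^n(G)$ are non-critical; on these, $\si_d$ acts injectively on edges. Combined with the standard correspondence between infinite gaps of $\ol{\lam^r_f}$ and (pre)periodic attracting-type Fatou components of $f$, whose cycles are finite in number by Fatou--Shishikura, the orbit of $G$ must enter a finite cycle. Write $G_0 = \si_d^n(G)$ with $\si_d^{n+p}(G) = G_0$.

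Third, let $\ell = \ol{ab}$ be an edge of $G$. Then $\si_d^n(\ell)$ is an edge of the periodic infinite gap $G_0$, and $\si_d^p$ permutes the edges of $G_0$. The induced action of $\si_d^p$ on $G_0 \cap \uc$ is conjugate to a standard covering in the Fatou-gap case, so some $\si_d^p$-iterate of $\si_d^n(\ell)$ has a rational (periodic) endpoint. Applying Lemma \ref{l:a5} to leaves of $\lam^r_f$ approximating this iterate forces the other endpoint to be periodic of the same period. Pulling back through the appropriate iterate of $\si_d$, both $a$ and $b$ are rational and preperiodic, so $\ell$ itself is a (pre)periodic leaf. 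The main obstacle lies in step two: the eventual periodicity of the gap orbit rests on the structural correspondence between infinite gaps of $\ol{\lam^r_f}$ and Fatou components, together with a Fatou--Shishikura-type bound on non-repelling cycles, and carrying this out purely laminationally requires care; once step two is in place, the reduction to Lemma \ref{l:a5} is essentially formal.
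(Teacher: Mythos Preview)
Your approach is substantially different from the paper's and contains real gaps. The paper's proof is three lines: it cites \cite[Lemma~4.5]{bopt17a}, which says that any edge of an infinite gap of any invariant lamination is eventually mapped to a critical leaf or to a periodic leaf; in the periodic case one is done, and in the critical case the paper observes that a critical edge of an infinite gap must be isolated in the lamination, hence (since we are in the closure of $\lam^r_f$) must already lie in $\lam^r_f$ and therefore have rational, i.e., (pre)periodic, endpoints. No gap periodicity, no Fatou components, no Lemma~\ref{l:a5}.

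Your step two rests on a ``standard correspondence between infinite gaps of $\ol{\lam^r_f}$ and (pre)periodic attracting-type Fatou components of $f$'' that is neither standard nor correct as stated: infinite gaps of $\ol{\lam^r_f}$ can correspond to Siegel disks, Cremer points, or more, and there is no a priori reason they should be governed by Fatou--Shishikura in the way you suggest. You flag this yourself, but it is not a minor technicality; it is the whole difficulty, and the paper avoids it entirely by working with edges rather than gaps. Your step three has an independent gap: from the fact that the return map on $G_0\cap\uc$ is semiconjugate to some $z\mapsto z^k$ it does \emph{not} follow that a given edge has an iterate with a periodic endpoint, since generic points under $z^k$ are not preperiodic. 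What is actually needed is an expansion argument (short edges grow under iteration until they become one of finitely many long edges, forcing eventual periodicity or criticality), and that is precisely the content of the lemma the paper cites. So even if step two were granted, filling step three would amount to reproving \cite[Lemma~4.5]{bopt17a}, after which the detour through gap periodicity and Lemma~\ref{l:a5} is unnecessary.
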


\begin{proof}
Any leaf on the boundary of an infinite gap of any lamination $\lam$ is
eventually mapped to a critical leaf or to a periodic leaf, cf.
\cite[Lemma 4.5]{bopt17a}. However if an infinite gap $G$ has a
critical edge $\ell$, then by the properties of laminations this leaf
must be isolated in $\lam$.
If $\lam=\ol{\lam^r_f}$, then the above is only possible if $\ell\in \lam^r_f$ is (pre)periodic.
\end{proof}

We are ready to prove the next lemma.

\begin{lem}\label{l:ratiofull}
If $f$ is a polynomial of degree $d\ge 2$ with locally connected Julia
set and there is no bounded Fatou domain of $f$ whose boundary
contains a critical point with infinite orbit, then
$\ol{\lam^r_f}=\lam_f$.
\end{lem}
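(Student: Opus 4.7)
The inclusion $\ol{\lam^r_f}\subseteq\lam_f$ is immediate: $\lam_f$ is closed by Lemma~\ref{l:a6} and contains $\lam^r_f$.

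For the reverse inclusion, let $\ell=\ol{ab}\in\lam_f$ be a non-degenerate leaf, and let $z\in J(f)$ be the common landing point of $R_f(a)$ and $R_f(b)$. If $z$ is (pre)periodic, then by Theorems~\ref{t:a2} and \ref{t:a3} every external ray landing at $z$ has rational argument, so $a,b$ are rational and $\ell\in\lam^r_f$. Hence we may assume $z$ has infinite forward orbit, in which case $a,b$ are irrational. The plan is then to exhibit rational leaves $\ell_n\in\lam^r_f$ converging to $\ell$.

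Since $J(f)$ is locally connected, the induced surjection $\uc\to J(f)$ is continuous and every $\sim_f$-class is finite (Theorem~\ref{t:a4}). We split according to whether $z$ lies on the boundary of a bounded Fatou component. Suppose $z\in\bd(U)$ for some bounded Fatou component $U$. By hypothesis, no critical point of $f$ with infinite orbit lies on $\bd(U)$; together with \cite{man93}, this rules out $U$ being (a preimage of) a Siegel disk, because the boundary of a Siegel disk carries a recurrent, hence infinite-orbit, critical point. Therefore $U$ eventually maps onto an attracting or parabolic periodic component $U_0$, and some iterate of $f$ acts as an expanding covering on $\bd(U_0)$, whence (pre)periodic points are dense in $\bd(U)$. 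Pick (pre)periodic $z_n\in\bd(U)$ with $z_n\to z$. At each $z_n$ the rational rays landing form a finite $\sim_f$-class; using continuity of the landing map and the combinatorial constraint of Theorem~\ref{t:a1}, we select rational angles $a_n\to a$, $b_n\to b$ with $a_n\sim_f b_n$, giving leaves $\ell_n=\ol{a_n b_n}\in\lam^r_f$ converging to $\ell$. If instead $z$ lies on no bounded Fatou boundary, then $z$ is a wandering branch point of $J(f)$; we approximate $\ell$ by an analogous construction using repelling periodic points of $f$, dense in $J(f)$, combined with the sibling invariance of $\lam^r_f$ (Lemma~\ref{l:ratio-sibl}).

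The main obstacle lies in the combinatorial matching at each $z_n$: we must extract from its rational $\sim_f$-class a pair of angles converging precisely to $a$ and $b$, and not to some other pair in the $\sim_f$-class of $z$. Local connectivity of $J(f)$ together with the order-preserving action on angle sets provided by Theorem~\ref{t:a1} ensure that as $z_n\to z$ the rational $\sim_f$-classes at $z_n$ converge in the Hausdorff sense into the $\sim_f$-class of $z$, so that an edge of the $n$-th class converging to $\ell$ can indeed be extracted, completing the proof.
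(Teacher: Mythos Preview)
Your argument has a genuine gap in the approximation step, which is the heart of both cases. You claim that for (pre)periodic $z_n\to z$ one can extract leaves $\ell_n\in\lam^r_f$ with $\ell_n\to\ell$, justifying this by asserting that ``the rational $\sim_f$-classes at $z_n$ converge in the Hausdorff sense into the $\sim_f$-class of $z$.'' Continuity of the landing map gives only \emph{upper} semicontinuity: any accumulation point of angles in the classes of the $z_n$ lies in the class of $z$. It does \emph{not} guarantee that the classes of the $z_n$ are large enough to contain edges close to $\ell$. A repelling periodic point near $z$ may be the landing point of a single external ray, so that its $\sim_f$-class is a singleton and contributes no non-degenerate leaf at all; nothing in your argument forces multiple rays to land at $z_n$ with arguments near both $a$ and $b$. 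Theorem~\ref{t:a1} describes how $\si_d$ acts on a \emph{single} class, not how classes at nearby points relate, and sibling invariance of $\lam^r_f$ is of no help here either. Case~B is therefore essentially unproved, and Case~A has the same defect once one notes that $\ell$ need not itself be an edge of the Fatou gap corresponding to $U$. (There is also a minor imprecision in Case~A: Ma\~n\'e's theorem only places $\bd(\Delta)$ in the $\omega$-limit set of a recurrent critical point, not the critical point on $\bd(\Delta)$ itself.)

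The paper's proof avoids this difficulty by a different route. Rather than approximate a given leaf directly, it argues by contradiction: assuming $\ol{\lam^r_f}\subsetneqq\lam_f$, it first uses \cite{bmov13} to show that $\ol{\lam^r_f}$ is itself generated by a laminational equivalence relation, then locates a gap $\widehat G$ of $\ol{\lam^r_f}$ whose interior meets leaves of $\lam_f$. After showing that $\widehat G$ is infinite and (via the hypothesis on critical points on Fatou boundaries) that all $\lam_f$-gaps inside it are finite, one passes to a periodic image $G$ and observes that $(G\cap\uc)/{\sim_f}$ is a dendrite carrying a self-map induced by an iterate of $\si_d$. A structural result for such dendrites (Theorem~7.2.7 of \cite{bfmot12}) then produces infinitely many periodic cutpoints inside $G$, hence rational leaves of $\lam^r_f$ inside $G$, contradicting its being a gap of $\ol{\lam^r_f}$. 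The existence of these periodic cutpoints is exactly the missing ingredient in your approach.
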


\begin{proof}
Recall that always $\ol{\lam^r_f}\subset \lam_f$. Suppose that
$\ol{\lam^r_f}\subsetneqq \lam_f$. By Lemma \ref{l:ratio-sibl}, the
collection $\lam^r_f$ is sibling invariant. Moreover, let $x$ and $y$
be rational arguments. By Theorems \ref{t:a2} and \ref{t:a3}, if $x\sim
y$ and $x$ is periodic for $\sigma_3$, then $y$ is periodic of the same
period. By Lemma \ref{l:a5} it follows that there are no critical
leaves in $\ol{\lam^r_f}$ with a periodic endpoint. Moreover, it
follows also that if $x\in \uc$ is periodic and $\ol{xy} \neq
\ol{xz}$ are leaves of $\ol{\lam^r_f}$, then $\si_d(\ol{xy})\ne
\si_d(\ol{xz})$. Sibling invariant collections of leaves with these
properties are called \emph{proper}; such collections as well as their
closures are studied in \cite{bmov13}. In particular, it follows from
Theorem 4.9 of \cite{bmov13} that $\ol{\lam^r_f}$ is a lamination
associated with an equivalence relation, say, $\approx$, on the unit
circle. This means that $\ol{\lam^r_f}$ is formed by the edges of the
convex hulls of all $\approx$-classes. Recall that $\lam_f$ is
generated by a specific equivalence relation on $\uc$ denoted by
$\sim_f$.

Now, by the assumption $\ol{\lam^r_f}\subsetneqq \lam_f$.
This implies that there is a gap $\widehat G$ of $\ol{\lam^r_f}$ that contains leaves of $\lam_f$ inside
(so that only the endpoints of these leaves belong to the boundary of $\widehat G$).
The gap $\widehat G$ cannot be finite because then all its vertices must be $\sim_f$-equivalent,
 and leaves of $\lam_f$ cannot intersect the interior of $\widehat G$.
Suppose that $\widehat G$ is infinite.
We claim that there are no infinite gaps $H$ of $\lam_f$ properly contained in $\widehat G$. Indeed, suppose otherwise.
Then an edge $\ell$ of $H$ must be contained in the interior of $\widehat G$ (except for its endpoints).
Observe that any edge of an infinite gap of any lamination is
either (pre)critical or (pre)periodic (cf.
\cite[Lemma 4.5]{bopt17a}). Since $\ell\in \lam_f\sm \ol{\lam^r_f}$, this implies that $\ell$ is
(pre)critical with infinite orbit, a contradiction with the assumption of the lemma. Thus,
all gaps of $\lam_f$ in $\widehat G$ are finite.

By \cite[Theorem 1.1]{kiw02}, all infinite gaps are (pre)periodic.
Hence for some $n$ the infinite gap $G=\si_d^n(\widehat G)$ is
periodic. By the previous paragraph all gaps of $\lam_f$ in $G$ are
finite. Then the quotient space $(G\cap\uc)/\sim_f$ is a so-called
\emph{dendrite}, which carries a self-map induced by  $\si_d^p$ where
$p$ is the minimal period of $G$. Theorem 7.2.7 from \cite{bfmot12}
implies that there are infinitely many periodic cutpoints in this
dendrite, hence $G$ contains leaves of $\lam^r_f$, a contradiction.
\end{proof}

\section{Preliminaries to Theorem A}\label{s:prel}

In this section, we list various preliminary results.
Some of them are well known and therefore given without proof.

\subsection{A perturbation lemma}\label{ss:pl}
Consider a sequence $\la_n\in\disk$ converging to $\lambda\in\uc$.
We say that $\la_n$ converges to $\la$ \emph{non-tangentially} if all $\la_n$ belong to a cone with the following properties.
The vertex of the cone is $\la$.
The axis of symmetry of the cone is the radius (radial line) through $\la$.
The angle between the edges of the cone and its axis of symmetry is less than $\pi/2$.
For an open set $U\subset\C$
and a holomorphic map $g:U\to\C$ with attracting fixed point $0$,
let $A(g)$ be the immediate basin of attraction of $0$ with
respect to $g$. Recall a part of Corollary 2 from \cite{BuPe},
based on ideas of \cite[Proposition 1, page 66]{yoc95}:

\begin{lem}[Corollary 2 of \cite{BuPe}]
 \label{l:perturb1}
 Suppose that $\la_n\in\disk$ converge non-tangentially to $\la\in\uc$.
 Let $U\subset\C$ be an open set, and $f:U\to\C$ be a holomorphic map with $f(0)=0$ and $f'(0)=\la$.
 Assume that $f$ has a Siegel disk $\Delta$ around $0$.
 If the sequence $f_n:U\to\C$ satisfies $f_n(0)=0$, $f'_n(0)=\la_n$, and for every compact subset $K\subset\Delta$
 $$
 \max_{z\in K}|f_n(z)-f(z)|=O(|\la-\la_n|),\quad n\to\infty,
 $$
 then any compact set $\widetilde K\subset\Delta$ is contained in $A(f_n)$ for $n$ large enough.
\end{lem}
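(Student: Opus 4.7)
The plan is to reduce the problem to a perturbation of a rigid rotation in the linearizing coordinate on $\Delta$, upgrade the hypothesis from a uniform bound to a quadratic one via the Schwarz lemma, and exploit the non-tangential hypothesis to produce contracting dynamics on a definite disk around $0$.

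First, since $0$ is a Siegel fixed point for $f$, fix a biholomorphism $\phi:\Delta\to\disk_r$ with $\phi(0)=0$, $\phi'(0)=1$, and $\phi\circ f\circ\phi^{-1}(w)=\la w$ on $\disk_r$. For $n$ large the conjugates $g_n=\phi\circ f_n\circ\phi^{-1}$ are holomorphic on any fixed subdisk $\ol{\disk_{r_1}}\subset\disk_r$, satisfy $g_n(0)=0$ and $g_n'(0)=\la_n$, and inherit the bound $|g_n(w)-\la w|\le C|\la-\la_n|$ on $\ol{\disk_{r_1}}$ for some $C=C(r_1)$. Set $h_n(w)=g_n(w)-\la_n w$; then $h_n(0)=h_n'(0)=0$, so $h_n(w)/w^2$ is holomorphic on $\disk_{r_1}$. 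Combined with $|h_n(w)|\le(C+r_1)|\la-\la_n|$ on $\ol{\disk_{r_1}}$, the maximum principle yields
\[
|h_n(w)|\le A\,|\la-\la_n|\,|w|^2,\qquad A=\frac{C+r_1}{r_1^2},\quad w\in\disk_{r_1}.
\]

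Second, convert this into an iterative contraction using non-tangential convergence. By hypothesis there is $\kappa>0$ with $1-|\la_n|\ge\kappa|\la-\la_n|$ for all $n$. For $|w|<\kappa/A$,
\[
|g_n(w)|\le|w|\bigl(|\la_n|+A|\la-\la_n|\,|w|\bigr)<|w|\bigl(|\la_n|+\kappa|\la-\la_n|\bigr)\le|w|,
\]
and in fact $|g_n(w)|\le\rho_n|w|$ with $\rho_n<1$ strictly. Thus, for any $r_0<\kappa/A$, the disk $\ol{\disk_{r_0}}$ is forward invariant under $g_n$ and iterates decrease geometrically to $0$, so $\disk_{r_0}\subset A(g_n)$ and $\phi^{-1}(\disk_{r_0})\subset A(f_n)$ for $n$ large enough.

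Third, for an arbitrary compact $\widetilde K\subset\Delta$ the above may give only a capture disk smaller than $\phi(\widetilde K)$, and one needs a bootstrap. The Koenigs linearization $\phi_n$ of $g_n$, initially defined on $\disk_{\kappa/A}$ and normalized by $\phi_n'(0)=1$, extends by the functional equation $\phi_n\circ g_n=\la_n\phi_n$ to all of $A(g_n)$. A normal-family argument then shows $\phi_n\to\mathrm{id}$ locally uniformly on $\disk_r$, so the image of $A(g_n)$ in the linearized chart exhausts $\disk_r$ on compacts, giving $\widetilde K\subset A(f_n)$ for $n$ large. The main obstacle is the balance between the perturbation scale $|\la-\la_n|$ and the contraction scale $1-|\la_n|$: non-tangential convergence is precisely what makes the attraction dominate the perturbation, since without it iterates could be pushed out of the invariant disk before $|\la_n|$ has a chance to contract. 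A secondary difficulty is that the naive quadratic estimate produces only a capture disk of a fixed size, so the conclusion for \emph{arbitrary} compact $\widetilde K\subset\Delta$ requires tracking the convergence of the Koenigs linearizations of $g_n$ to the Siegel linearization of $f$ on all of $\Delta$.
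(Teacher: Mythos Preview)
The paper does not prove this lemma; it simply quotes it as Corollary~2 of \cite{BuPe} (which in turn rests on \cite[Proposition~1, p.~66]{yoc95}) and moves on. So there is no in-paper proof to compare against, and your proposal is really an attempt to reconstruct the Buff--Petersen argument.

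Your Steps~1--3 are correct and standard: passing to the Siegel linearizing coordinate, upgrading the uniform $O(|\la-\la_n|)$ bound to the quadratic estimate $|h_n(w)|\le A|\la-\la_n|\,|w|^2$ via the Schwarz lemma, and combining this with the non-tangential condition $1-|\la_n|\ge\kappa|\la-\la_n|$ does produce a fixed radius $r_0<\kappa/A$ in the linearized coordinate on which $g_n$ is a strict contraction for all large $n$. This correctly isolates why non-tangential approach is essential.

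The genuine gap is Step~4. Your bootstrap via Koenigs linearizations is circular as written: you define $\phi_n$ on $A(g_n)$ and then assert that ``$\phi_n\to\mathrm{id}$ locally uniformly on $\disk_r$'', but $\phi_n$ is only defined on $A(g_n)$, and the inclusion $\ol{\disk_\rho}\subset A(g_n)$ for a given $\rho<r$ is precisely what remains to be proved. A normal-family argument cannot be run on a domain not yet known to lie inside the domain of definition. Nor can the radial estimate from Step~3 be pushed outward: the contraction condition $|\la_n|+A|\la-\la_n|\,|w|<1$ forces $|w|<(1-|\la_n|)/(A|\la-\la_n|)\le 1/A$, a threshold that does \emph{not} improve as $n\to\infty$ and need not exceed an arbitrary $\rho<r$. (Already for $f(z)=\la z$ with $\Delta=\C$ and $f_n(z)=\la_n z+|\la-\la_n|\,z^2$, your bound permits $|g_n(w)|>|w|$ once $|w|>1$.) The passage from a fixed small captured disk to an arbitrary compact in $\Delta$ is in fact the substantive content of the Yoccoz/Buff--Petersen result; it requires a quantitative lower bound on the conformal radius of $A(f_n)$ (this is where the Yoccoz-type inequality enters), not a soft compactness step. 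What you label a ``secondary difficulty'' is the main one.
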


We now go back to our family $\Fc$.
Below, we define some special perturbations of polynomials in $\Fc_{nr}$.
Let $f(z)=f_{\lambda,b}(z)=\lambda z+b z^2+z^3\in \Fc_{nr}$ so that $|\la|\le 1$.
Then denote by $f_\eps$ the polynomial
\begin{equation} \label{eq:feps}
f_{(1-\eps)\lambda, b}(z)=(1-\eps) \lambda z+ bz^2+z^3\in \Fc_{at},
\end{equation}
where $\eps>0$.
The following is an easy corollary of Lemma \ref{l:perturb1}.

\begin{cor}\label{c:perturb2}
If $f=\lambda z+b z^2+z^3$ has a Siegel disk $\Delta(f)$ around $0$,
then, for every compact set $\widetilde K\subset \Delta(f)$, there
exists $\delta(\widetilde K)>0$ such that every polynomial $f_\eps$ has
the property $\widetilde K\subset A(f_\eps)$ for any
$0<\eps<\delta(\widetilde K)$.
\end{cor}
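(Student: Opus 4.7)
The plan is to apply Lemma~\ref{l:perturb1} directly with $U=\C$ and the one-parameter family $f_\eps$ from \eqref{eq:feps}; all three hypotheses of that lemma will be essentially immediate in this setting.

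First I would pick an arbitrary sequence $\eps_n\downarrow 0$ and set $f_n=f_{\eps_n}$, $\la_n=(1-\eps_n)\la$. Because $f$ has a Siegel disk at $0$ we automatically have $|\la|=1$, in particular $\la\ne 0$. The differences $\la-\la_n=\eps_n\la$ all lie along the radius through $\la$, so $\la_n\to\la$ along the radial line itself. This is trivially contained in any cone whose vertex is $\la$, whose axis of symmetry is that radius, and whose opening angle is less than $\pi/2$; hence the non-tangential hypothesis of Lemma~\ref{l:perturb1} is satisfied.

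Second I would verify the perturbation bound. By construction $f_n(z)-f(z)=-\eps_n\la z$, so for any compact $K\subset\C$
\[
\max_{z\in K}|f_n(z)-f(z)|\le \eps_n |\la|\max_{z\in K}|z|=O(\eps_n),
\]
while $|\la-\la_n|=\eps_n|\la|=\eps_n$. This gives exactly the estimate $\max_K|f_n-f|=O(|\la-\la_n|)$ demanded by Lemma~\ref{l:perturb1}, uniformly on any compact $K\subset\Delta(f)$.

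Applying Lemma~\ref{l:perturb1} with $\Delta=\Delta(f)$ then yields that any prescribed compact set $\widetilde K\subset\Delta(f)$ satisfies $\widetilde K\subset A(f_n)$ for all sufficiently large $n$. Since the sequence $\eps_n\downarrow 0$ was arbitrary, unpacking the quantifiers produces a $\delta(\widetilde K)>0$ such that $\widetilde K\subset A(f_\eps)$ for every $0<\eps<\delta(\widetilde K)$, as claimed. There is no substantial obstacle in this argument; the only point warranting care is confirming that radial convergence is non-tangential in the sense defined at the beginning of the subsection, which follows from the fact that the cone's axis is precisely the radius through $\la$.
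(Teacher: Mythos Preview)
Your proposal is correct and follows essentially the same approach as the paper: both verify non-tangential (in fact radial) convergence of $\la_n=(1-\eps_n)\la$ to $\la$, compute $f_{\eps_n}(z)-f(z)=-\eps_n\la z$ to get the $O(|\la-\la_n|)$ bound, and invoke Lemma~\ref{l:perturb1}. The only cosmetic difference is that the paper phrases the passage from sequences to a $\delta$ as an explicit proof by contradiction, whereas you state it as ``arbitrary sequence, then unpack the quantifiers''; the underlying logic is identical.
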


\begin{proof}
Assume the contrary.
Then there exists a sequence $\eps_n\to 0$ with $\widetilde K\not\subset A(f_{\eps_n})$.
Set $\la_n=(1-\eps_n)\la$; then $\la_n$ converge to $\la$ non-tangentially.
To use Lemma \ref{l:perturb1}, observe that for a compact set $K\subset \Delta(f)$
$$
\max_{z\in K}|f_{\eps_n}(z)-f(z)|=O(|\la-\la_n|),\quad n\to\infty
$$
because the left-hand side equals $\eps_n\max_{z\in K}|z|$ while
$|\la-\la_n|=\eps_n$. This yields a
contradiction with Lemma \ref{l:perturb1} and proves the corollary.
\end{proof}

\subsection{Blaschke products}\label{ss:bp}

Here we deal with the dynamics of Blaschke products. As we do not
need Blaschke products of higher degrees and for the sake of simplicity we only
consider quadratic Blaschke products with fixed point $0$.
For a complex number $a$, we let ${\overline {a}}$ denote the complex conjugate of $a$.

\begin{dfn}[Blaschke products]\label{d:bp}
Let $b$ and $s$ be complex numbers such that $0<|b|<1$ and $|s|=1$.
Then the formula
\begin{equation} \label{eq:bp}
B_{b, s}(z)=sz{\frac {b-z}{1-{\overline {b}}z}}
\end{equation}
defines a \emph{quadratic Blaschke product with fixed point $0$}.
It is not hard to see that the Blaschke product \eqref{eq:bp} is conjugate by a rotation to a so-called
\emph{normalized quadratic Blaschke product} $Q_a$ of the form
\begin{equation} \label{eq:bp1}
Q_a(z)=z{\frac {a-z}{1-{\overline {a}}z}};
\end{equation}
for some complex number $a$ with $|a|<1$.
\end{dfn}

Our normalized Blaschke product $Q_a$ differs by a sign from the
traditional one in which the numerator is $z-a$, not $a-z$. It is well
known that $Q_a$ is a quadratic rational function that preserves
$\disk$, its complement $\C\sm \disk$, and the unit circle $\uc$.
Moreover,
\begin{equation}\label{eq:bpder}
Q'_a(z)={\frac {{\overline {a}}z^2-2z+a} {(1-{\overline {a}}z)^2}},
\end{equation}
which implies that $Q'_a(0)=a$;
an easy computation shows that the multiplier of the fixed point at $\infty$ is $\overline{a}$.
Thus, both $0$ and infinity are attracting fixed points of $Q_a$.
Set $\disk_r=\{|z|<r\}$; then, by the Schwarz Lemma (or directly),
we have $Q_a(\cdisk_r)\subset \disk_r$.
Similarly,
$|Q_a(z)|>|z|$ if $|z|>1$.
Hence the Julia set of $Q_a$ is $\uc$.
In fact, $Q_a$ is expanding on $\uc$, see \cite{tis00}.
For the sake of completeness, we now sketch the proof.
Assume that there exists a point $w\in \uc$ such that $|Q'_a(w)|\le 1$. Set
$s=\frac{w}{Q_a(w)}$ and consider $T(z)=sQ_a(z)$; then $|T'(w)|\le 1$
and $T(w)=w$ is a non-repelling fixed point of $T$. Since the Julia set of any
quadratic Blaschke product, in particular, of $T(z)$, is $\uc$, the point $w$ cannot be attracting.
By the Snail Lemma, $w$ must then be parabolic with multiplier $1$.
However, this contradicts the fact that points near $\uc$ are attracted to $0$ and $\infty$
(and repelled away from $\uc$) by $T$.

Solving the
quadratic equation ${\overline {a}}z^2-2z+a=0$, we see that the critical
points of $Q_a(z)$ are given by
$$
c_a=\frac{1 - \sqrt{1-|a|^2}}{{\overline {a}}}\quad
\textrm{and}
\quad
d_a=\frac{1 + \sqrt{1-|a|^2}}{{\overline {a}}}
$$
It is easy to see, that
\begin{equation}\label{eq:bpcrpt}
c_a=\frac{1 - \sqrt{1-|a|^2}}{{\overline
{a}}}=a\frac{1-\sqrt{1-|a|^2}}{|a|^2}=\frac{a}{1+\sqrt{1-|a|^2}}
\end{equation}
is the unique critical point of $Q_a$ that belongs to $\disk$.
Also, by \eqref{eq:bpcrpt} $a$ and $c_a$ belong to the same radial segment of $\cdisk$ so
that $c_a$ is located between $0$ and $a$.
Observe that if
$a\to s\in \uc$, then $c_a\to s$ too. To describe the limit behavior of
the entire orbit of $c_a$ as $a\to s\in \uc$, we need Lemma
\ref{l:limit-onk}. For a complex number $w$, set $\mathrm{R}_w(z)=wz$.

\begin{lem}\label{l:limit-onk}
Suppose that $s\in \uc$ and $K\subset \C\sm \{s\}$ is a compact set.
Then the maps $Q_a$ converge to $\mathrm{R}_s$ uniformly on $K$ as $a\to s$.
\end{lem}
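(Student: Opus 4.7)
The plan is to compute $Q_a(z)-R_s(z)=Q_a(z)-sz$ explicitly, extract a factor proportional to $a-s$ in the numerator, and bound the denominator away from zero uniformly on $K$.

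First I would write
\[
Q_a(z)-sz=z\cdot\frac{a-z}{1-\overline{a}z}-sz=z\cdot\frac{(a-s)-z(1-s\overline{a})}{1-\overline{a}z}.
\]
The key observation is that because $|s|=1$, we have $s\overline{s}=1$, so
\[
1-s\overline{a}=s(\overline{s}-\overline{a})=s\,\overline{(s-a)},
\]
which gives $|1-s\overline{a}|=|a-s|$. Consequently the numerator is bounded above by $|a-s|(1+|z|)$, and we obtain
\[
|Q_a(z)-sz|\le|z|\cdot\frac{(1+|z|)\,|a-s|}{|1-\overline{a}z|}.
\]

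Next I would control the denominator. Since $K\subset\C\setminus\{s\}$ is compact and $|s|=1$, the continuous function $z\mapsto|1-\overline{s}z|$ does not vanish on $K$, so by compactness there exists $c>0$ with $|1-\overline{s}z|\ge c$ on $K$. Because $\overline{a}\to\overline{s}$ as $a\to s$, and $|z|$ is bounded on $K$ by some $M$, for $a$ sufficiently close to $s$ we have $|1-\overline{a}z|\ge c/2$ uniformly on $K$. Combining the two bounds,
\[
\sup_{z\in K}|Q_a(z)-sz|\le\frac{2M(1+M)}{c}\,|a-s|,
\]
which tends to $0$ as $a\to s$, giving uniform convergence.

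There is no real obstacle here; the only subtlety is recognizing the identity $|1-s\overline{a}|=|a-s|$ that makes the difference $Q_a(z)-sz$ genuinely $O(|a-s|)$ rather than merely $o(1)$, and noting that the potential singularity of $Q_a$ at $1/\overline{a}$ migrates to $s$ as $a\to s$, which is precisely why the hypothesis $s\notin K$ is needed to keep the denominator bounded below.
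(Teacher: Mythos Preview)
Your proof is correct and follows essentially the same idea as the paper's: both hinge on the algebraic consequence of $|s|=1$ (namely $s\overline{s}=1$), which in the paper is used to observe that $Q_s(z)=sz$ for all $z\neq s$ and then to invoke joint continuity, while you push the same identity a step further to obtain an explicit Lipschitz bound $\sup_K|Q_a-R_s|\le C\,|a-s|$. Your version is thus a slightly more quantitative rendering of the paper's ``standard continuity arguments.''
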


\begin{proof}
Since $|s|=1$, we have $s\ol{s}=1$. Therefore $s-z=s-s\ol{s}z=s(1-\ol{s}z)$. Dividing on both sides by
$1-\ol{s}z$, we see that $\frac{s-z}{1-\ol{s}z}=s$ for all $z\ne \frac{1}{\ol{s}}=s$. Since
$K\subset \C\sm \{s\}$ is a compact set, standard continuity arguments imply the conclusions
of the lemma.
\end{proof}

This does not yet yield the limit behavior of the orbit of $c_a$ as $a\to s\in \uc$ as then
$c_a\to s$ too, and Lemma~\ref{l:limit-onk} does not apply.

\begin{lem}\label{l:bpcror}
Suppose that $s=e^{2\pi i \ta}$, where $\ta$ is irrational. Let $\eps$
be a positive real number and $m$ be a positive integer. Then there
exists $\delta>0$ such that for any $a\in \disk$ with $|s-a|<\delta$ we
have $|Q^i_a(c_a)|>1-\eps$ for all $i=0, 1, \dots, m$.
\end{lem}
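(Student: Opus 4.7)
The plan is to show that, as $a \to s$ in $\disk$, the first $m+1$ points of the forward orbit $c_a, Q_a(c_a), \ldots, Q_a^m(c_a)$ converge respectively to the points $s, s^2, \ldots, s^{m+1}$ on $\uc$. Since each $s^k$ has modulus $1$ and there are only finitely many iterates to control, this pointwise convergence will immediately give the uniform lower bound $|Q_a^i(c_a)|>1-\eps$ for $0\le i\le m$ by choosing $\delta$ small enough. The use of the irrationality of $\ta$ will come in through the fact that $s, s^2, \ldots, s^{m+1}$ are pairwise distinct, so in particular each of $s^2,\ldots,s^{m+1}$ stays at a positive distance from $s$.

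The first step is an explicit computation of the critical value. Setting $t=\sqrt{1-|a|^2}$, so that by \eqref{eq:bpcrpt} $c_a=a/(1+t)$, one finds $a-c_a=at/(1+t)$ and $1-\overline{a}c_a=(1+t-|a|^2)/(1+t)=t$, whence from \eqref{eq:bp1}
\begin{equation*}
Q_a(c_a)=c_a\cdot\frac{a-c_a}{1-\overline{a}c_a}=\frac{a^2}{(1+t)^2}.
\end{equation*}
As $a\to s$ we have $t\to 0$ and $Q_a(c_a)\to s^2$. This step is needed because Lemma~\ref{l:limit-onk} does not apply to the initial iterate: the critical point $c_a$ itself tends to the point $s$ that must be excluded from the compact set in that lemma.

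Next I would argue by induction on $i$ that $Q_a^i(c_a)\to s^{i+1}$ for every $0\le i\le m$. The base cases $i=0,1$ are handled above. For the inductive step, suppose $Q_a^i(c_a)\to s^{i+1}$ with $1\le i\le m-1$. Because $\ta$ is irrational, $s^{i+1}\ne s$, so one can fix a small closed disk $K$ centered at $s^{i+1}$ with $s\notin K$. By Lemma~\ref{l:limit-onk}, $Q_a$ converges uniformly on $K$ to the rotation $\mathrm{R}_s$ as $a\to s$. Since $Q_a^i(c_a)\in K$ for $a$ close enough to $s$, this gives $Q_a^{i+1}(c_a)=Q_a(Q_a^i(c_a))\to \mathrm{R}_s(s^{i+1})=s^{i+2}$, completing the induction.

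Finally, since $|s^{i+1}|=1$ for every $i$ and the index set $\{0,1,\ldots,m\}$ is finite, one can choose $\delta>0$ so small that $|Q_a^i(c_a)|>1-\eps$ holds for all $0\le i\le m$ simultaneously whenever $|a-s|<\delta$. The only point requiring care is the first iterate, where the hands-on formula for $Q_a(c_a)$ replaces the compactness argument; after that each subsequent iterate lands away from $s$ and the uniform-convergence machinery of Lemma~\ref{l:limit-onk} takes over routinely.
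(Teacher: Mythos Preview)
Your proof is correct and follows essentially the same route as the paper: both compute the critical value explicitly (the paper writes $Q_a(c_a)=c_a^2$, which equals your $a^2/(1+t)^2$), then use Lemma~\ref{l:limit-onk} together with the irrationality of $\theta$ (so that $s^{i+1}\ne s$) to control the subsequent iterates. The only cosmetic difference is that the paper packages the iteration step via prechosen ``polar rectangles'' built on disjoint arcs $I,\mathrm{R}_s(I),\dots,\mathrm{R}_s^m(I)$, whereas you run a clean pointwise induction on $i$; the underlying mechanism is identical.
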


In other words, if $a=Q'_a(0)$ is close to $s$, then the orbit of $c_a$
stays close to the unit circle for any given period of time. The
conclusions of the lemma are sensitive with respect to the point whose
trajectory we consider. For example, $Q_a(a)=0$ so that the orbit of
$a$ under $Q_a$ is $(a, 0, 0, \dots)$ and, thus, the limit behavior of
the orbits of $a$ and of $c_a$ are very different even though both $a$
and $c_a$ converge to $s=e^{2\pi i \ta}$.

\begin{proof}
We will use the following notation and terminology.
Given a \emph{small} arc $T\subset \uc$ with endpoints of arguments $\al$ and $\be$,
denote by $U_T$ a ``polar rectangle''
built upon $T$ with vertices (in polar coordinates) given by $(1-|T|,
\al),$ $(1+|T|, \al),$ $(1+|T|, \be),$ $(1-|T|, \be).$

Simple computations show that

\begin{equation}\label{eq:bpcrim}
Q_a(c_a)=\frac{(1-\sqrt{1-|a|^2})^2}{\overline{a}^2}=c^2_a
\end{equation}

Since $\ta$ is irrational, there exists a closed arc
$I\subset \uc$ symmetric with respect to $s$ such that $I,$
$\mathrm{R}_s(I),$ $\mathrm{R}^2_s(I),$ $\dots,$ $\mathrm{R}^m_s(I)$ are
pairwise disjoint circle arcs. By Lemma~\ref{l:limit-onk}, we can choose
a small arc $T\subset \mathrm{R}_s(I)$ centered at $s^2$ such that for all
$a$ sufficiently close to $s$ we have that $Q^i_a(U_T)\subset
U_{\mathrm{R}^{i+1}_s(I)}$ for all $i=0, \dots, m-1$.
We can then choose a small neighborhood $W$ of $s$ so that
$\zeta^2\subset U_T$ provided that $\zeta\in W$;
by \eqref{eq:bpcrpt} and \eqref{eq:bpcrim} this implies that for any $a$ sufficiently close to
$s$ we have $c_a\in W$ and $Q_a^j(c_a)\in U_{\mathrm{R}^j_s(I)}$ for
every $j=1, \dots, m$.
\end{proof}

\subsection{Modulus}\label{ss:mod}
The notion of the modulus of an annulus is widely used in complex dynamics.
We need several well known facts concerning annuli and their moduli.

\begin{dfn}\label{d:annul} A \emph{round} annulus $A(r, R)\subset \C$
is an open annulus formed by two concentric circles of radii $r<R$. A
\emph{topological} annulus $U\sm K$ is formed by a simply connected
domain $U\subset \C$ and a \emph{non-separating} (i.e., such
that $\C\sm K$ is connected) continuum $K\subset U$.
If $K$ is not a singleton and $U\ne\C$, then we will call $U\sm K$ \emph{non-degenerate}.
\end{dfn}

An important fact concerning annuli is the following well known version
of the Riemann Mapping Theorem.

\begin{thm}[see, e.g., Theorem 10, Section 5, Chapter 6 \cite{ahl79}]
Any non-degenerate annulus is conformally equivalent to a
non-degenerate round annulus.
\end{thm}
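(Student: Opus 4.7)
The plan is to follow the classical construction of the modulus of an annulus via harmonic functions. First, I would apply the Riemann mapping theorem to the simply connected domain $U$, thereby reducing to the case $A=\disk\setminus K$, where $K\subset\disk$ is a non-degenerate non-separating continuum. The boundary $\bd(A)$ then has exactly two components, namely $\uc$ and $\bd(K)$.

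Next, I would solve the Dirichlet problem on $A$ to obtain a function $u$ that is harmonic on $A$, equals $0$ on $\bd(K)$, and equals $1$ on $\uc$. Perron's method produces such a $u$ even when $\bd(K)$ is irregular, and the maximum principle forces $0<u<1$ on $A$. Since $A$ is doubly connected, the harmonic conjugate $v$ of $u$ is only defined locally, but analytic continuation of $v$ around the generator of $\pi_1(A)\cong\Z$ produces a well-defined real period $\tau>0$ (equal to the flux of $\nabla u$ across any simple loop in $A$ separating the two boundary components). I would then form the holomorphic map
\[
\phi(z)=\exp\!\left(\frac{2\pi(u(z)+iv(z))}{\tau}\right).
\]
The $2\pi i$ cancellation in the exponent renders $\phi$ single-valued on $A$, and clearly $\phi(A)\subset A(1,e^{2\pi/\tau})$.

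To promote $\phi$ to a conformal equivalence, I would use the argument principle: integrating $\phi'/(\phi-w)$ around a level set $\{u=c\}$ for any prescribed $w$ in the target round annulus yields that $\phi$ takes the value $w$ exactly once. Surjectivity then follows from the open mapping theorem combined with the boundary behavior $|\phi|\to 1$ as $u\to 0$ and $|\phi|\to e^{2\pi/\tau}$ as $u\to 1$. Injectivity and surjectivity together give the desired conformal map to the round annulus $A(1,e^{2\pi/\tau})$.

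The main obstacle is the regularity of $\bd(K)$, which may in general be a wild continuum on which the Dirichlet problem has no classical solution. I would handle this by exhausting $A$ from inside by smoothly bounded annuli $A_n\uparrow A$, producing conformal maps $\phi_n\colon A_n\to A(1,R_n)$ by the cleaner smooth-boundary case, and passing to a limit via the Carath\'eodory kernel convergence theorem; the monotonicity of the modulus under inclusion guarantees that the sequence $R_n$ converges to some finite $R>1$, and the limit map is the desired conformal equivalence of $A$ with $A(1,R)$.
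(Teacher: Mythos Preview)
The paper does not prove this theorem; it merely quotes it as a classical result with a reference to Ahlfors' \emph{Complex Analysis}. Your outline is essentially the textbook argument found there (harmonic measure, its period, and the exponential map), so there is nothing to compare against.

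Two small remarks on your sketch. First, invoking ``monotonicity of the modulus'' in the exhaustion step is circular as stated, since in the paper's logical order the modulus is \emph{defined} only after this theorem; you should instead bound $R_n$ directly, e.g.\ by noting that an essential conformal embedding $A(1,R_n)\hookrightarrow\disk\setminus K$ with $\mathrm{diam}(K)>0$ forces $R_n$ to stay bounded (Koebe/Gr\"otzsch-type estimate), or else introduce extremal length independently. Second, the Carath\'eodory kernel theorem in its usual form is for simply connected domains; for the limit step it is cleaner to normalize the $\phi_n$ at a fixed interior point, apply Montel, and check that the limit is non-constant and injective via Hurwitz. Neither point is a genuine obstruction, just a matter of tightening the exposition.
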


Let us also mention classical Schottky's Theorem.

\begin{thm}[\cite{sch877}]\label{t:sch}
Two round annuli $A(r, R)$ and $A(r', R')$ are conformally equivalent
if and only if $\frac{R}{r}=\frac{R'}{r'}$.
\end{thm}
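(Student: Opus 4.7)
The plan is to prove both implications by reducing to a universal-cover picture.

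\emph{Easy direction.} If $R/r=R'/r'$, the linear map $z\mapsto (r'/r)z$ sends $A(r,R)$ conformally onto $A(r',R')$, so the two annuli are equivalent.

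\emph{Hard direction.} Suppose $\varphi:A(r,R)\to A(r',R')$ is a conformal equivalence. Consider the universal cover
\[
\exp:S\to A(r,R),\qquad S=\{w\in\C:\log r<\Re w<\log R\},
\]
whose group of deck transformations is generated by $T:w\mapsto w+2\pi i$, and analogously $\exp:S'\to A(r',R')$. Lift $\varphi$ to a biholomorphism $\widetilde\varphi:S\to S'$. Equivariance means there is an integer $n$ with $\widetilde\varphi\circ T=T^n\circ\widetilde\varphi$; since $\widetilde\varphi$ is a biholomorphism between simply connected strips and $\varphi$ is one-to-one on $A(r,R)$, one must have $n=\pm 1$.

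\emph{Extending across the boundary.} Possibly composing $\varphi$ with the involution $z\mapsto rR/z$ (which sends $A(r,R)$ to itself, interchanges the boundary circles, and is conformal), we may assume $\varphi$ maps the inner boundary to the inner boundary and the outer to the outer. Hence $\widetilde\varphi$ extends continuously to $\overline S$ mapping each vertical boundary line of $S$ to the corresponding boundary line of $S'$. By the Schwarz reflection principle applied across $\{\Re w=\log r\}$ and $\{\Re w=\log R\}$ repeatedly, $\widetilde\varphi$ extends to a holomorphic map from a horizontal strip of arbitrary width to $\C$, and iterating the reflection scheme to an entire biholomorphism $F:\C\to\C$ that still satisfies $F(w+2\pi i)=F(w)\pm 2\pi i$.

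\emph{Conclusion.} Any biholomorphism of $\C$ is affine, so $F(w)=aw+b$. The equivariance $F(w+2\pi i)=F(w)\pm 2\pi i$ forces $a=\pm 1$, so $\widetilde\varphi$ is an isometric translation (composed with a reflection). It sends the vertical strip $S$ of width $\log R-\log r$ onto the vertical strip $S'$ of width $\log R'-\log r'$, which forces $\log R-\log r=\log R'-\log r'$, i.e.\ $R/r=R'/r'$.

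The main technical step is the Schwarz reflection extension to an entire map; once that is in hand the Liouville-type classification of biholomorphisms of $\C$ together with the equivariance makes the ratio-preservation immediate.
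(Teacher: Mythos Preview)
The paper does not prove this theorem; it is merely stated as a classical result with a citation to Schottky's 1877 paper, so there is no proof in the paper to compare against. Your argument is correct and is one of the standard routes to the result.

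One place worth a sentence of justification is the continuous extension of $\widetilde\varphi$ to $\partial S$. This is not automatic from $\widetilde\varphi$ being a biholomorphism of strips; it follows because the original map $\varphi$ extends continuously (in fact analytically) across the boundary circles of $A(r,R)$, since those circles are analytic Jordan curves, and this extension lifts. Once that is in place, the iterated Schwarz reflection works exactly as you describe, the resulting entire map is injective on every finite strip and hence on $\C$, and the affine classification together with the $2\pi i$-equivariance forces $a=\pm 1$ and equal strip widths.
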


We are ready to define the modulus.

\begin{dfn}\label{d:mod} For a round annulus $A(r, R)$
its \emph{modulus} $m(A(r, R))$ is defined as the number
$\frac{\ln(R)-\ln(r)}{2\pi}$. Given a topological annulus $\widehat A$
that is conformally equivalent to the round annulus $A=A(r, R)$, we set
$m(\widehat A)=m(A)=\frac{\ln(R)-\ln(r)}{2\pi}$.
\end{dfn}

Observe that by Schottky's Theorem the modulus of an annulus is well
defined and invariant under conformal equivalence. We need
Theorem~\ref{t:annuprop} concerning this concept; below $d(X, Y)$
denotes the infimum of the distance between points $x\in X$ and $y\in
Y$ for sets $X, Y\subset \C$.

\begin{thm}\label{t:annuprop}
Suppose that $A\subset A'$ are two annuli such that $A$ is not null-homotopic in $A'$.
Then $m(A)\le m(A')$.
Moreover, there exists a function $\psi:\R_{>0}\to \R_{>0}$ such that $d(K, \uc)\ge \psi(m(\disk\sm K))$
  for any non-separating continuum $K\subset\disk$.
\end{thm}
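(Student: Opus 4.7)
The plan has two essentially separate parts.

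For the monotonicity $m(A)\le m(A')$, my plan is to use the extremal length characterization of the modulus: for an annulus $B$, one has $m(B)=\mathrm{ext}(\Gamma_B)$, where $\Gamma_B$ is the family of locally rectifiable arcs in $B$ joining its two boundary components. Since the inclusion $A\subset A'$ is not null-homotopic, $A$ \emph{separates} the two boundary components of $A'$ inside $A'$, so every arc in $\Gamma_{A'}$ contains a sub-arc lying in $\Gamma_A$. Extending any extremal conformal metric $\rho$ for $A$ by zero on $A'\sm A$, the minimum $\rho$-length over $\Gamma_{A'}$ is at least the minimum over $\Gamma_A$, while the $\rho$-area of $A'$ equals the $\rho$-area of $A$; comparing $L^{2}/\mathrm{area}$ yields $m(A')\ge m(A)$.

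For the second assertion, my plan is to convert the conformal information (the modulus) into the Euclidean information ($d(K,\uc)$) via a reflection followed by a Gr\"otzsch--Teichm\"uller extremal ring estimate, working under the natural hypothesis (automatic in every application of this theorem within the paper) that $K$ contains the marked fixed point $0$. Set $\delta=d(K,\uc)$ and pick $p\in K$ with $|p|=1-\delta$. Reflecting $\disk\sm K$ across $\uc$ via $z\mapsto 1/\bar{z}$ yields a doubly connected domain $W=\hC\sm(K\cup K^{*})$ of modulus exactly $2m(\disk\sm K)$ (by Schwarz reflection applied to the uniformization $\disk\sm K\to A(r,1)$), whose two complementary continua $K$ and $K^{*}=\{1/\bar{z}:z\in K\}$ contain the pairs $\{0,p\}$ and $\{\infty,1/\bar{p}\}$, respectively. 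The classical Gr\"otzsch--Teichm\"uller extremal rings theorem bounds $m(W)$ above by an explicit function $\Phi(\delta)$ depending only on the Euclidean configuration of these four prescribed points; $\Phi$ is strictly increasing and continuous on $(0,1)$, positive, and satisfies $\Phi(\delta)\to 0$ as $\delta\to 0$. Setting $\psi(t):=\Phi^{-1}(2t)$ gives $d(K,\uc)\ge\psi(m(\disk\sm K))$.

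The main obstacle is the Gr\"otzsch--Teichm\"uller extremal bound used in the second paragraph: this is the mechanism that promotes a lower bound on conformal modulus to a quantitative Euclidean distance estimate. The monotonicity of the first paragraph is elementary once extremal length is in hand; the quantitative estimate in the second paragraph, in contrast, is genuinely nontrivial and rests on the classical identification of radial segments as the modulus-maximizing configuration among continua through prescribed reference points, which I would quote from Lehto--Virtanen rather than re-prove.
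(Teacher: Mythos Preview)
Your extremal-length argument for the monotonicity $m(A)\le m(A')$ is correct and standard; the paper gives no proof of either claim, merely citing McMullen's book and Problem~I of Chapter~III, Section~A in Ahlfors' \emph{Lectures on Quasiconformal Mappings} (which is precisely the Gr\"otzsch extremal ring).

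Your observation that the second assertion requires the normalization $0\in K$ is correct and worth emphasizing: as literally stated, that assertion is false. Take $K=[1-2\delta,\,1-\delta]$. This segment has hyperbolic length in $\disk$ tending to $\ln 2$ as $\delta\to 0$, so by M\"obius invariance $m(\disk\sm K)$ equals a fixed positive constant independent of $\delta$, while $d(K,\uc)=\delta\to 0$. The paper's only application (in the proof of Lemma~\ref{l:zeta-out}) does have $0$ in the relevant compact set after transporting by the Riemann map, so your added hypothesis is exactly what is used.

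Your reflection step, however, is an unnecessary detour, and your invocation of Teichm\"uller is imprecise at a key point. The \emph{standard} Teichm\"uller module theorem bounds the modulus of a ring separating $\{0,z_1\}$ from $\{z_2,\infty\}$ by a function of $|z_2|/|z_1|$ alone; in your reflected configuration $|z_2|/|z_1|=1/(1-\delta)^2\to 1$, which yields only the constant bound $2\mu(1/\sqrt 2)$ and does \emph{not} tend to $0$. One can repair this via a further M\"obius normalization exploiting that $p$ and $1/\bar p$ lie on the same ray, but it is much simpler to skip the reflection and apply Gr\"otzsch's module theorem directly to $\disk\sm K$: since $K$ is a continuum containing $0$ and a point of modulus $1-\delta$, one has
\[
m(\disk\sm K)\le \mu(1-\delta),\qquad \mu(r)=m(\disk\sm[0,r]).
\]
As $\mu$ is continuous, strictly decreasing, and $\mu(r)\to 0$ when $r\to 1^-$, setting $\psi(t)=1-\mu^{-1}(t)$ gives $d(K,\uc)\ge\psi(m(\disk\sm K))$ at once. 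This is exactly the content of the Ahlfors reference the paper invokes.
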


The first part of Theorem~\ref{t:annuprop} is well known and can be found in various textbooks;
 the second part easily follows, e.g., from \cite[Theorem 2.4]{Mc} or from \cite[Problem I of Section A, Chapter III]{Ahl06}.

\subsection{Hyperbolic components}
We will make use of the following result \cite[Corollary 2.10]{McS}:

\begin{lem}
  \label{l:McS}
  Let $f$ be a hyperbolic rational function.
  Then the set $[f]_{top}$ of rational functions topologically conjugate to $f$ coincides with
  the set of rational functions qc-conjugate to $f$ and is connected.
\end{lem}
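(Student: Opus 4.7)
The plan is to establish the conclusion in two stages: first, the identification of $[f]_{top}$ with the set $[f]_{qc}$ of rational functions qc-conjugate to $f$, and second, the connectedness of $[f]_{qc}$. The inclusion $[f]_{qc}\subseteq [f]_{top}$ is immediate since every quasiconformal homeomorphism is, in particular, a homeomorphism, so the nontrivial content is the reverse inclusion together with connectedness.

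For the reverse inclusion, I would exploit the hyperbolicity of $f$. Given a topological conjugacy $\phi$ between $f$ and $g$, the goal is to replace it by a qc one. On the Fatou side, every Fatou component of $f$ eventually lands in the basin of some attracting cycle, and the first return map on each immediate basin is holomorphically conjugate to a model (a Blaschke product on the disk, after uniformization), with fundamental annuli outside the attractor. One defines a qc map between corresponding fundamental annuli of $f$ and $g$ that realizes the same combinatorial data as $\phi$, and then spreads it by the forward and backward dynamics to the entire Fatou set of $f$. On the Julia side, hyperbolicity yields a uniform expansion and a Markov-style geometry: $\phi|_{J(f)}$ is then automatically quasisymmetric (by the well-known argument of Sullivan/Przytycki), and extends quasiconformally to a neighborhood of $J(f)$. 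Gluing the Julia-side and Fatou-side extensions across the ideal boundary (which has measure zero) and invoking the removability of the Julia set for qc maps of hyperbolic rational functions produces a global qc conjugacy from $f$ to $g$, proving $[f]_{top}\subseteq[f]_{qc}$.

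For connectedness of $[f]_{qc}$, I would pass to the Teichm\"uller space $\mathrm{Teich}(f)$ parametrizing invariant Beltrami differentials on $\C$. In the hyperbolic case, any such differential is supported in the Fatou set because $J(f)$ has measure zero and carries no invariant line fields. The quotient of $F(f)$ by the grand orbit relation is a finite disjoint union of punctured Riemann surfaces (one for each cycle of Fatou components, with postcritical punctures), and $\mathrm{Teich}(f)$ is canonically the product of the Teichm\"uller spaces of these quotient orbifolds. Each factor is a connected (indeed contractible) complex manifold, hence so is $\mathrm{Teich}(f)$. The natural map $\mathrm{Teich}(f)\to[f]_{qc}$, sending a Beltrami differential to the straightened rational function, is continuous and surjective, so $[f]_{qc}$ is the continuous image of a connected set and hence connected.

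The main obstacle is the first step: promoting a bare topological conjugacy on $J(f)$ to a quasisymmetric one and then a qc one in a neighborhood. This is exactly the point where hyperbolicity is used in an essential way; for non-hyperbolic maps (Cremer or parabolic cycles, or Siegel disks with wild boundary behavior, which are very much the subject of the rest of this paper), topological conjugacy does not in general upgrade to qc conjugacy, and the analogue of Lemma \ref{l:McS} fails. A careful proof therefore hinges on the uniform expansion on $J(f)$ and the rigidity of the action of $f$ near its attracting cycles, both of which are direct consequences of the hypothesis that $f$ is hyperbolic.
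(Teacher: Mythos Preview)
The paper does not give its own proof of this lemma: it is quoted verbatim as \cite[Corollary~2.10]{McS} and used as a black box. Your two-stage outline (upgrade a topological conjugacy to a qc one using expansion on $J(f)$ and fundamental annuli near the attractors, then deduce connectedness from the contractibility of $\mathrm{Teich}(f)$) is precisely the McMullen--Sullivan argument, so there is no discrepancy to discuss.

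One small caution on your first stage: the cleanest route is not to show that the \emph{given} homeomorphism $\phi$ is qc, but rather to use $\phi$ only to transport the combinatorial data (the marking of Fatou components and postcritical orbits) and then to \emph{construct} a new qc conjugacy via the Teichm\"uller parametrization. Arguing that $\phi|_{J(f)}$ itself is quasisymmetric is true for expanding maps but is a separate theorem; the construction-from-scratch route avoids this detour and is what \cite{McS} actually does.
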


Suppose now that $f$ and $g$ are hyperbolic polynomials in $\Fc$ with
connected Julia sets. Recall that then $J(f)$, $J(g)$ are locally
connected. A \emph{critical orbit relation} for $f$ is a constraint of
the form $f^n(c)=f^m(d)$, $m\ne n$, where $c$ and $d$ are critical
points of $f$, not necessarily different.
As in Section \ref{ss:pcuts}, we can associate geodesic laminations
$\lam_f$ and $\lam_g$ with $f$ and $g$, respectively.

\begin{lem}
  \label{l:topconj}
  Let $f$ and $g$ be two degree $d>1$ hyperbolic polynomials with connected Julia sets such that $\lam_f=\lam_g$.
If $f$ and $g$ have no critical orbit relations, then $f$ and $g$ are topologically conjugate.
\end{lem}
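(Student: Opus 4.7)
My plan is to first construct a topological conjugacy on the Julia set from the identity $\lam_f=\lam_g$, then extend it to the basin at infinity using the B\"ottcher coordinate, and finally extend it componentwise to the bounded Fatou components, using the absence of critical orbit relations to match the critical combinatorics consistently.

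Since $f$ and $g$ are hyperbolic with connected Julia sets, both $J(f)$ and $J(g)$ are locally connected. Hence $J(f)=\uc/\!\sim_f$ and $J(g)=\uc/\!\sim_g$, and $\lam_f=\lam_g$ forces $\sim_f=\sim_g$ (two laminational equivalences with finite classes sharing the same leaf set are equal). The identity on $\uc$ therefore descends to an equivariant homeomorphism $\phi_J:J(f)\to J(g)$ conjugating $f|_{J(f)}$ to $g|_{J(g)}$. On the basin of infinity, the standard B\"ottcher coordinates give conformal conjugacies of $f$ and $g$ with $z\mapsto z^d$ on $\hC\setminus\cdisk$; after normalizing the choice of $(d-1)$-th root of unity, the composition yields a conformal conjugacy $\phi_\infty:A_\infty(f)\to A_\infty(g)$ that carries the $f$-ray $R_f(\alpha)$ to $R_g(\alpha)$ for every $\alpha$. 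By local connectivity of both Julia sets, $\phi_\infty$ extends continuously to the boundary and agrees there with $\phi_J$.

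The identification $\sim_f=\sim_g$ also induces a dynamics-preserving bijection between bounded Fatou components: for each bounded Fatou component $U$ of $f$, the set of prime-end arguments forms a $\sim_f$-class-structure on the boundary which picks out a unique bounded Fatou component $V=\phi_*(U)$ of $g$ of the same period and eventual period. Now I build the conjugacy cycle by cycle. Fix an attracting periodic cycle $\mathcal{O}$ of $f$ with immediate basin cycle $U_1\to\cdots\to U_p\to U_1$ and the corresponding cycle $V_1\to\cdots\to V_p$ of $g$; because no critical orbit relations exist, each attracting cycle of $f$ captures a uniquely labeled critical orbit, and the same holds for $g$ (by the bijection of Fatou components and the compatibility of the combinatorics through $\lam_f=\lam_g$). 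On each immediate basin, use either the B\"ottcher coordinate around the superattracting fixed point of the return map or the K\oe nigs linearizer in the geometrically attracting case to produce a conformal conjugacy $\phi_1:U_1\to V_1$, and spread it around the cycle via $\phi_{i+1}=g\circ\phi_i\circ f^{-1}$ on an appropriate inverse branch. Pull back through $f$ and $g$ to obtain conjugacies on every preperiodic bounded Fatou component. In each step one must also check that the boundary values agree with $\phi_J$; this works because the boundary map of the Riemann map of $U_i$ onto $\disk$ is (by Carath\'eodory) determined by $\sim_f$ on the corresponding arc of $\uc$, and the same holds for $V_i$.

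Assembling $\phi_J$, $\phi_\infty$, and the Fatou conjugacies gives a bijection $h:\C\to\C$; continuity follows from local connectivity of $J(f)$, $J(g)$ (so prime ends and points coincide) and from the continuous matching on the common boundary of Julia and Fatou pieces, and $h\circ f=g\circ h$ by construction on each piece. The main obstacle will be precisely this gluing: verifying that the independently defined Fatou-piece conjugacies actually match the Julia-set conjugacy $\phi_J$ on their common boundary. This reduces to the fact that, once the B\"ottcher (respectively, linearizing) coordinate on a periodic Fatou component is fixed, its boundary extension is determined by the lamination of $f$, so equality of laminations and equality of combinatorial critical data (guaranteed by the absence of critical orbit relations) forces compatibility. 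Alternatively, one may bypass the explicit gluing by invoking Lemma~\ref{l:McS}: construct a quasiconformal conjugacy by the same cycle-by-cycle scheme (using that each immediate basin admits a qc model determined by its lamination and its unique captured critical orbit), then conclude from qc-equivalence that $f$ and $g$ are topologically conjugate.
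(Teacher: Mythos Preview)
The paper gives no proof of this lemma; it simply says ``See \cite{McS} for very similar statements. The same methods prove Lemma \ref{l:topconj}.'' Your alternative in the last paragraph --- invoke the McMullen--Sullivan theory --- is therefore exactly the paper's route.

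Your direct construction, however, has a genuine error. You claim that the B\"ottcher or K\oe nigs coordinate ``produces a conformal conjugacy $\phi_1:U_1\to V_1$'' between immediate basins. This is false. Since $f$ and $g$ have no critical orbit relations, no critical point is periodic, so every attracting cycle is geometrically attracting; the B\"ottcher case never arises. The K\oe nigs linearizer conjugates the first return map to $z\mapsto\lambda_f z$ (respectively $z\mapsto\lambda_g z$) near the fixed point, and these linear maps are conformally conjugate only when $\lambda_f=\lambda_g$, which you have no reason to assume. More globally, the first return on $U_1$ uniformizes to a degree $\ge 2$ Blaschke product with attracting fixed point of multiplier $\lambda_f$, and two such Blaschke products with different multipliers are \emph{not} conformally conjugate on $\disk$; they are only quasiconformally (hence topologically) conjugate. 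So the conjugacy you need on each basin is genuinely non-conformal, and producing it --- together with the correct boundary values matching $\phi_J$ --- is precisely the content of the McMullen--Sullivan machinery. Your sketch does not supply this step; it asserts what has to be proved.

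If you want to salvage the direct argument, the honest version is: uniformize each periodic immediate basin to $\disk$; observe that the return maps become expanding Blaschke products of equal degree; build a qc conjugacy between them (e.g.\ by interpolating Beltrami coefficients on a fundamental annulus, or by deforming the multiplier as in \cite{McS}); check that the induced boundary circle conjugacy is the \emph{unique} orientation-preserving conjugacy between the two expanding circle maps and hence agrees with the one dictated by $\phi_J$; then pull back. Each of these steps is standard but none is a one-liner, which is why the paper (and you, in the end) defer to \cite{McS}.
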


See \cite{McS} for very similar statements.
The same methods prove Lemma \ref{l:topconj}.
It follows that $g\in [f]_{top}$.
Note however that, in the cubic case, the intersection of $[f]_{top}$ with $\Fc$ may be disconnected.

\begin{cor}
  \label{c:samecomp}
If polynomials $f$ and $g$ belong to the same bounded hyperbolic component of $\Fc$, then $\lam_{f}=\lam_{g}$.
On the other hand, suppose that $f$, $g\in\Fc_{at}$ are hyperbolic polynomials with connected Julia sets such that $\lam_f=\lam_g=\lam$.
If $f$ and $g$ have no attracting fixed points except $0$, then $f$, $g$ belong to the same hyperbolic component of $\Fc$.
\end{cor}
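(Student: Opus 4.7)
For the first claim, I would invoke the standard fact that within a bounded hyperbolic component $\Uc\subset\Fc$, any two polynomials are qc-conjugate via a homeomorphism obtained by integrating a holomorphic motion along a path in $\Uc$. Since polynomials in $\Fc$ are monic and the conjugacy deforms continuously from the identity along the path, it is asymptotic to the identity near infinity and hence preserves the normalized B\"ottcher coordinate $B(z)=z+O(1/z)$ (the cubic B\"ottcher ambiguity is only $\pm 1$, and the $+1$ branch is pinned down by continuity from the identity). Consequently, external rays of $f$ and $g$ at the same angle correspond under the conjugacy, their landing points match, and $\lam_f=\lam_g$.

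For the converse, the plan is to combine Lemma~\ref{l:topconj} with Lemma~\ref{l:McS}. The first obstacle is that Lemma~\ref{l:topconj} assumes no critical orbit relations, a hypothesis absent from the setup. I would perturb $f$ and $g$ within their respective hyperbolic components $\Uc_f$ and $\Uc_g$ of $\Fc$ to polynomials $f'\in\Uc_f$ and $g'\in\Uc_g$ carrying no critical orbit relations; this is possible because each relation $P^n(c_i(P))=P^m(c_j(P))$ cuts out a proper analytic subvariety of any hyperbolic component, so their complement is dense. By the first part of the corollary, $\lam_{f'}=\lam_f=\lam_g=\lam_{g'}$, and Lemma~\ref{l:topconj} now supplies a topological conjugacy between $f'$ and $g'$.

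The main obstacle is to translate ``topologically conjugate'' into ``in the same hyperbolic component of $\Fc$''. By Lemma~\ref{l:McS}, $[f']_{top}$ is connected and coincides with the qc-conjugacy class of $f'$, so one obtains a continuous family $f_t$, $t\in[0,1]$, of monic cubic polynomials from $f_0=f'$ to $f_1=g'$ inside $[f']_{top}$ (after choosing the qc-conjugacies to fix $\infty$ with unit derivative in the chart $w=1/z$, which guarantees that each $f_t$ is monic). Each $f_t$ is topologically conjugate to $f'$, hence hyperbolic with a \emph{unique} attracting fixed point $a_t$ varying continuously in $t$; the hypothesis that $f$ and $g$ have $0$ as their only attracting fixed point forces $a_0=a_1=0$. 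Setting $\tilde f_t(z)=f_t(z+a_t)-a_t$ produces a continuous path in $\Fc$ from $f'$ to $g'$ through hyperbolic polynomials. Since hyperbolic components of $\Fc$ are open, the path lies in a single component; hence $f'$ and $g'$ share a hyperbolic component of $\Fc$, which forces $\Uc_f=\Uc_g$, completing the proof.
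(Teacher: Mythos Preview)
Your treatment of the first claim is fine and matches the paper's (which simply leaves it to the reader). The converse direction, however, has a genuine gap at the step where you assert that the continuous family $f_t$ of monic cubics can be taken to run from $f'$ to $g'$.

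Fixing $\infty$ with unit derivative does make sense here (the Beltrami coefficient of a qc conjugacy between two hyperbolic cubics with connected Julia set can be taken to vanish on the basin of $\infty$), and it does force each $f_t$ to be monic. But it only pins down $\phi_t$ up to a translation, and---more to the point---it forces $\phi_1$ to have derivative $+1$ at $\infty$, whereas the qc-conjugacy $\psi:f'\to g'$ supplied by Lemmas~\ref{l:topconj} and~\ref{l:McS} has derivative $\pm 1$ there (in B\"ottcher coordinates it must commute with $z\mapsto z^3$, hence is $w\mapsto \pm w$). If that sign is $-1$, then $\phi_1$ differs from $\psi$ by $z\mapsto -z$ (composed with a translation), and your path terminates at $-g'(-z)=f_{\lambda_{g'},-b_{g'}}$, not at $g'$. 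Your subsequent translation by the attracting fixed point $a_t$ cannot repair this: both $g'$ and its twin $f_{\lambda_{g'},-b_{g'}}$ have $0$ as their unique attracting fixed point, so $a_1=0$ either way and $\tilde f_1$ is still the twin.

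This $\pm$ obstruction is exactly what the paper's proof confronts head-on. There one lets the path $h_t\in\Fc$ end at either $g$ or its twin $f_{\lambda_g,-b_g}$, and then rules out the twin: since $h_1$ lies in the component of $f$, one has $\lam_{h_1}=\lam$ by the first part of the corollary, while the twin has lamination equal to the half-turn of $\lam_g=\lam$; hence $\lam$ would be invariant under the half-turn, which by \cite{bopt16a} forces the major of the invariant quadratic gap to be $\ol{0\,\frac12}$ and thus produces \emph{two} invariant attracting basins---contradicting the hypothesis that $0$ is the only attracting fixed point. So the hypothesis is used in an essential way that your argument misses; you invoke it only to get uniqueness of $a_t$ and $a_0=a_1=0$, which is not enough. (Your approach can be salvaged without the half-turn argument by observing that, because $\lam_{f'}=\lam_{g'}$, one may \emph{choose} $\psi$ to match B\"ottcher coordinates angle-by-angle on the basin of $\infty$, i.e.\ with derivative $+1$ at $\infty$; but this has to be said, and it is not what Lemmas~\ref{l:topconj} and~\ref{l:McS} give you for free.)
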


\begin{proof}
The first claim is a variation of a well-known property of hyperbolic components; it is left to the reader.
To prove the rest,
we may assume that neither $f$ nor $g$ has critical orbit relations.
Indeed, otherwise we can slightly perturb $f$ and $g$ within their hyperbolic components of $\Fc$ so that
the perturbed maps have no critical orbit relations.
Then $f$ and $g$ are topologically conjugate by Lemma \ref{l:topconj}.
Suppose that $f=f_{\la_f, b_f}=z^3+b_fz^2+\la_f z$ and
$g=g_{\la_g, b_g}=z^3+b_gz^2+\la_g z$.

By Lemma \ref{l:McS}, there is a continuous family $f_t$, $t\in [0,1]$
of cubic rational functions qc-conjugate to $f$ such that $f_0=f$ and $f_1=g$.
Indeed, a qc-conjugacy between $f$ and $g$ takes the standard complex structure on the dynamical plane of $g$ to some invariant qc-structure on the dynamical plane of $f$.
The latter is represented by a Beltrami differential $\nu$.
Considering the family of Beltrami differentials $\nu_t=t\nu$ and using the Ahlfors--Bers theorem, we obtain a family $f_t$
with the desired properties. Observe that all rational functions $f_t$ are hyperbolic.

Let $M_t$ be a complex affine transformation such that $h_t=M_t\circ f_t\circ M_t^{-1}\in\Fc$.
Since $[0,1]$ is simply connected, we may choose $M_t$ to depend continuously on $t$ and so that $M_0=id$.
Let $\Uc$ be the hyperbolic component of $\Fc$ containing $f$.
Then $h_t\in \Uc$ for all $t$ by continuity; in particular, $h_1\in \Uc$.
On the other hand, $h_1=M_1\circ g\circ M_1^{-1}\in\Fc$ and $g$ are affinely conjugate.
This implies that either $h_1=g$ or $h_1=z^3-b_g z^2+\la_g z$.
In the former case, we are done.
In the latter case, observe that $h_1$ and $g$ have the same linearizing coordinate near infinity
(this follows from the fact that $z\mapsto z^3$ commutes with the involution $z\mapsto -z$) while
the orbits of $g$ are obtained from the orbits of $h_1$ by $z\mapsto -z$.
Therefore, the geodesic lamination of $g$ differs from the geodesic lamination of $h_1$ by a half-turn.

On the other hand, by our construction $\lam$ coincides with the geodesic lamination of $h_1$.
Thus, $\lam$ is invariant with respect to the rotation by $180$ degrees
about the center of the unit disk. 
Then, by \cite{bopt16a}, the major of an invariant quadratic gap $G$ in $\lam$ corresponding to
the basin of immediate attraction of $0$ (of either $f$ or $g$) is $\ol{0\frac 12}$.
This implies that there are two invariant attracting domains of $g$ (or $f$), corresponding to $G$
and the $180$-degree rotation of $G$ with respect to the center of the unit disk.
A contradiction with the assumption that $g$ (and $f$) has only one attracting fixed point.
The statement now follows.
\end{proof}

\section{Proof of Theorem A}\label{s:mt}

Let $\Wc$ be a component of $\thu(\Pc_\la)\sm\Pc_\la$, where $|\la|\le
1$. It is called a \emph{queer domain} (or is said to be of
\emph{queer} type) if there exists a polynomial $f\in \Wc$ so that all
of its critical points are in $J(f)$. Polynomials from such $\Wc$ are
also said to be of \emph{queer type}. Observe that IS-polynomials and
polynomials of queer type have connected Julia sets. If $f$ is an
IS-polynomial, then $\mathrm{ca}(f)$ is a critical point of $f$ that
does not belong to $J(f)$, hence $f$ is \emph{not} a polynomial of
queer type.

The following theorem relies on \cite[Theorem 3.4]{Z}, where the most
difficult case is worked out.

\begin{thm}[\cite{bopt14b}]\label{t:nosiegel}
Let $\Wc$ be a component of $\thu(\Pc_\la)\sm\Pc_\la$ of queer type.
Then, for any polynomial $f\in\Wc$, the Julia set $J(f)$ has positive
Lebesgue measure and carries an invariant line field.
\end{thm}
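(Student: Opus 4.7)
The plan is to combine the structural theorem \cite[Theorem 3.4]{Z} with the geometric setup of queer components in the slice $\Fc_\la$. The strategy is to identify the right one-parameter family and then invoke that result.

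First I would show that every polynomial in $\Wc$ is of queer type, not merely some. Since $\Wc$ is open and connected and contains at least one queer polynomial by hypothesis, it suffices to check that the queer property is relatively clopen in $\Wc$. Openness: queerness requires both critical points to lie in $J(f)$ and $J(f)$ to be connected. Connectedness of the Julia set persists since $\Wc\subset\thu(\Pc_\la)$ stays inside the connectedness locus; ``critical points in $J(f)$'' persists because, were a perturbation to acquire a critical point in a Fatou component, it would be hyperbolic, parabolic, or have a Siegel disk, and then by Theorem~\ref{t:prophd}, Corollary~\ref{c:attracapt}, and the analysis of invariant quadratic gaps in Theorem~\ref{t:major}, the perturbed polynomial would lie in $\Pc_\la$, contradicting $\Wc\cap\Pc_\la=\emptyset$. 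Closedness in $\Wc$ follows by the same dichotomy: the limit of queer polynomials cannot acquire a non-repelling cycle without falling into $\Pc_\la$.

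Next I would show that every $f\in\Wc$ is $\la$-stable. In the one-dimensional slice $\Fc_\la$, the MSS theorem identifies the $J$-stable locus as the complement of the bifurcation locus, which is the closure of parameters where a critical orbit collides with a (pre)periodic point or where a cycle changes stability type. By the previous paragraph, no such bifurcation can occur in the open set $\Wc$, so $\Wc$ lies in the $\la$-stable locus and carries an equivariant holomorphic motion of the Julia sets over $\Wc$. This motion is nontrivial simply because $\Wc$ is a nondegenerate open subset of $\Fc_\la\cong\C$: if all nearby maps were conformally conjugate, $\Wc$ would collapse to a single point.

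Finally, I would apply \cite[Theorem 3.4]{Z}, which in this precise setting manufactures an $f$-invariant Beltrami coefficient $\mu$ supported on a subset of $J(f)$ of positive Lebesgue measure, and hence an $f$-invariant line field on $J(f)$. The hard step, and what I expect to be the main obstacle, is exactly this transfer from a nontrivial holomorphic motion to an invariant Beltrami differential genuinely \emph{supported on the Julia set}: a priori, the infinitesimal deformation from the motion could be carried entirely by the Fatou set, i.e., by iterated preimages of the immediate basin $A(f)$ of $0$. Ruling this out combines Sullivan's no-wandering-domains theorem with the classification of invariant Beltrami differentials on a linearizable attracting basin (where the only invariant line field is trivial), and this is precisely where the work in \cite{Z} is concentrated. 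Positivity of the Lebesgue measure of $J(f)$ is then immediate from the existence of a nonzero invariant line field on it.
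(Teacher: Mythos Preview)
This theorem is not proved in the present paper; it is quoted from \cite{bopt14b}, with the remark just before its statement that the most difficult case is handled by \cite[Theorem~3.4]{Z}. Your overall strategy --- establish $\la$-stability on $\Wc$ and then extract an invariant line field from the resulting holomorphic motion --- is indeed the route, and your first two steps are in fact already supplied by Theorem~\ref{t:extendclo}, so your separate arguments for them are unnecessary.

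There is, however, a genuine gap in your final paragraph. Queer polynomials necessarily have $|\la|=1$: if $|\la|<1$, the attracting basin of $0$ must contain a critical point, contradicting queerness. Hence there is no immediate basin $A(f)$, and your invocation in the first paragraph of Theorem~\ref{t:major} and Corollary~\ref{c:attracapt} (both stated for $\Fc_{at}$) is misplaced for the same reason. The bounded Fatou set of a queer $f$, if nonempty, consists of the Siegel disk $\Delta(f)$ around $0$ and its preimages. More seriously, your assertion that on a linearizable basin ``the only invariant line field is trivial'' is false: an attracting basin carries an infinite-dimensional family of invariant Beltrami differentials (spread any Beltrami on a fundamental annulus), and a Siegel disk carries all Beltrami of the form $h(|z|)\,z/\bar z$ in the linearizing coordinate. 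The substantive content of \cite[Theorem~3.4]{Z} is precisely to rule out the possibility that the one-complex-parameter deformation over $\Wc$ is entirely absorbed by such Beltrami on $\Delta(f)$ together with the conformal motion on the basin of infinity; this does not follow from Sullivan's no-wandering theorem plus a triviality claim, and is where the real work lies.
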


Properties of polynomials from $\Pc$ listed in Theorem~\ref{t:prophd}
are inherited by polynomials from the topological hulls
$\thu(\Pc_\la)$.

\begin{thm}[\cite{bopt14}]\label{t:extendclo}
Suppose that $|\la|\le 1$.
We have
\[\thu(\Pc_\la)\subset \mathcal{CU}.\]
Moreover, all components of the set $\thu(\Pc_\la)\sm \Pc_\la$,
where $|\la|\le 1$, consist of $\la$-stable polynomials.
\end{thm}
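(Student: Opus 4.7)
My plan is to derive both assertions from Theorem~\ref{t:prophd} (which gives $\Pc_\la\subset\mathcal{CU}$) together with three tools: connectedness of the shift locus in the one-complex-dimensional slice $\Fc_\la$, the minimum principle applied to holomorphically moving multipliers of persistent periodic cycles, and Lemma~\ref{l:rep} on stability of repelling periodic points and their landing rays.

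First I would show $\thu(\Pc_\la)\subset \Mc_3\cap\Fc_\la$. Because $\Fc_\la\cong\C$, the shift locus (i.e., the complement of the connectedness locus in $\Fc_\la$) is an open connected neighborhood of infinity, hence lies in the unique unbounded component of $\Fc_\la\setminus \Pc_\la$. Therefore every bounded component $\Wc$ of $\thu(\Pc_\la)\setminus \Pc_\la$ is contained in the connectedness locus, and all $f\in\thu(\Pc_\la)$ have connected Julia sets---the first $\mathcal{CU}$ condition.

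Next, fix such a bounded component $\Wc$, so $\partial \Wc\subset \Pc_\la\subset\mathcal{CU}$. For any periodic cycle persisting holomorphically over $\Wc$ (outside a discrete set of collision points) the multiplier $\mu(b)$ is holomorphic on $\Wc$. On $\partial \Wc$ the Main Cubioid constraint gives $|\mu|>1$ or $\mu=1$, so $\mu$ is non-vanishing on $\overline{\Wc}$. Then $1/\mu$ is holomorphic, $\log(1/|\mu|)$ is harmonic, and the maximum principle gives $|\mu|\ge 1$ throughout $\Wc$. If $|\mu(b_0)|=1$ at some interior point $b_0\in\Wc$, then $1/|\mu|$ attains its interior maximum and must be constant, so $\mu$ is a constant of modulus $1$ whose value, by continuity on $\partial \Wc\cap\{\mu=1\}$, equals $1$. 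Either outcome respects the multiplier condition of $\mathcal{CU}$. For the no-repelling-cutpoint condition, suppose $f_0\in\Wc$ had a repelling periodic cutpoint $p_0$; by Lemma~\ref{l:rep} the point $p_0$ and all finitely many rational rays landing at it deform holomorphically along any arc in $\Wc$, and since a finite family of landing rays witnessing the separation of $J(f_0)$ continues to land at the deformed point, the cutpoint property persists along the arc. Extending the arc to $\partial \Wc$ would deliver a polynomial in $\mathcal{CU}$ still exhibiting a repelling periodic cutpoint, contradicting the definition of $\mathcal{CU}$. This establishes $\thu(\Pc_\la)\subset \mathcal{CU}$.

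Finally, for $\la$-stability on $\Wc$, suppose $f_0\in \Wc$ were $\la$-unstable. By Mañé--Sad--Sullivan one finds parameters arbitrarily close to $f_0$ exhibiting either a non-parabolic neutral periodic cycle (multiplier in $\uc\setminus\{1\}$) or, by a small perturbation of a parabolic cycle, a newly created repelling periodic cutpoint. Both phenomena violate the $\mathcal{CU}$ conditions, and since $\Wc$ is open the misbehaving parameters lie inside $\Wc$, contradicting the Main Cubioid containment established above. Hence every $f\in\Wc$ is $\la$-stable.

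The hardest step is the cutpoint argument in the third paragraph: one must guarantee not merely that the landing rays deform, but that their topological separation of $J$ is preserved along the arc. This is where Lemma~\ref{l:rep} together with the holomorphic motion of $J$ over the $\la$-stable portion of $\Wc$ and the finiteness of landing-ray sets at a repelling periodic point (Theorem~\ref{t:a3}) supply the needed rigidity; a careful handling is required at points of $\Wc$ where $\la$-stability might a priori fail, which is ultimately resolved by running the $\la$-stability argument and the $\mathcal{CU}$ propagation in tandem rather than in strict sequence.
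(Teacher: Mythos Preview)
The paper does not prove this theorem; it is quoted from \cite{bopt14} and used as a black box. So there is no in-paper argument to compare against, and your sketch should be judged on its own merits.

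Your overall architecture (shift locus connectedness for connected $J$; multiplier control via a minimum-modulus principle; persistence of landing rays for the cutpoint condition; MSS for $\la$-stability) is the natural one, but there are two genuine gaps.

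\textbf{The multiplier step.} From ``$|\mu|>1$ or $\mu=1$ on $\partial\Wc$'' you conclude ``$\mu$ is non-vanishing on $\overline{\Wc}$'', and then invoke the minimum-modulus principle. This is a non sequitur: boundary information gives no control on interior zeros (think of $\mu(b)=b$ on the unit disk). But the minimum-modulus principle \emph{needs} non-vanishing, which is precisely (part of) what you are trying to show. A correct treatment must either rule out interior attracting cycles by a separate argument, or work with all branches of the multiplier as an algebraic (hence globally defined) function and use an argument-principle/properness style count rather than $\log(1/|\mu|)$. Relatedly, periodic cycles can collide inside $\Wc$, so $\mu$ is only a branch of an algebraic function; you cannot simply declare it single-valued holomorphic on $\Wc$ without first excluding parabolic collisions there---again circular.

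\textbf{The cutpoint step.} Pushing a repelling periodic cutpoint along an arc to $b_*\in\partial\Wc\subset\Pc_\la$ does \emph{not} force a contradiction: the cycle may become parabolic with multiplier $1$ at $b_*$, and the rays may still co-land there (Corollary~\ref{c:converge} allows this). A parabolic cutpoint with multiplier $1$ is perfectly compatible with $\mathcal{CU}$ (and with Theorem~\ref{t:prophd}), so nothing breaks. To repair this you must choose $b_*$ so that the cycle remains repelling there. This can be done: the multiplier $\mu$ is (a branch of) an algebraic function of $b$, hence if $\mu\equiv 1$ on all of $\partial\Wc$ then $\mu\equiv 1$ globally, contradicting $|\mu(b_0)|>1$. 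So some $b_*\in\partial\Wc$ has $\mu(b_*)\ne 1$, hence $|\mu(b_*)|>1$ by the $\mathcal{CU}$ constraint, and then Lemma~\ref{l:rep} delivers the repelling cutpoint at $b_*$. Your write-up omits this selection of $b_*$.

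Finally, as you yourself note, the $\la$-stability paragraph presupposes the $\mathcal{CU}$ inclusion, so the two arguments are not independent; the cleaner order is to establish the multiplier control first (hence no non-persistent neutral cycles other than multiplier $1$, hence $\la$-stability on $\Wc$ via MSS), and only then use the holomorphic motion together with the boundary-point selection above to exclude repelling periodic cutpoints.
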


In \cite{bopt14b}, we consider components of the set $\thu(\Pc_\la)\sm \Pc_\la$, where $|\la|\le 1$.
Let us describe some results of \cite{ bopt14b, bopt16b}.
A cubic polynomial $f\in\Fc_\la\sm\Pc=\Fc_\la\sm\Pc_\la$ with $|\la|\le 1$ is said to be \emph{potentially renormalizable}.
Perturbing a potentially renormalizable polynomial $f\in \Fc_{nr}\sm \Pc$ to a polynomial $g\in \Fc_{at}$,
we see that $g|_{A(g)}$ is two-to-one (otherwise 
$f\in \Pc$) and, hence, $g$ has two distinct critical points. By Lemma~\ref{l:2crpts} below this
property is inherited by a potentially renormalizable polynomial.

\begin{lem}[\cite{bopt14b}] \label{l:2crpts}
A potentially renormalizable polynomial has two distinct critical points.
\end{lem}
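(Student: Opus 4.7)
The plan is to prove the contrapositive: if $f = f_{\la, b} \in \Fc$ with $|\la| \le 1$ has a double critical point, then $f \in \Pc$. The discriminant of $f'(z) = 3z^2 + 2bz + \la$ vanishes precisely when $b^2 = 3\la$, in which case the only finite critical point of $f$ is $\om = -b/3$, a critical point of local degree $3$.

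First I treat the attracting case $|\la| < 1$. Since $\om$ is the unique finite critical point of $f$ and Fatou's theorem places at least one critical point in the immediate basin $A(f)$ of the attracting fixed point $0$, we must have $\om \in A(f)$. Applying Riemann--Hurwitz to the proper self-map $f|_{A(f)}\colon A(f)\to A(f)$ on the simply connected domain $A(f)$, with its only ramification at $\om$ of local degree $3$, yields
\[
\deg f|_{A(f)} \;=\; 1 + (3-1) \;=\; 3 \;=\; \deg f.
\]
Hence $A(f)$ is completely invariant. Since $f$ is then hyperbolic with only two Fatou components, $A(f)$ and the simply connected basin of infinity $A_\infty$, and since $J(f)$ is locally connected, the common boundary $J(f) = \bd A(f) = \bd A_\infty$ is a Jordan curve. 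By the definition of $\Pc^\circ$ recalled in Section~\ref{s:detailed}, we obtain $f \in \Pc^\circ \subset \Pc$.

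Next I handle the boundary case $|\la| = 1$ by radial approximation. For small $\eps > 0$, set $\la_\eps = (1-\eps)\la$ and $b_\eps = b\sqrt{1-\eps}$, so that $b_\eps^2 = 3\la_\eps$ and $f_\eps = f_{\la_\eps, b_\eps} \in \Fc$ also has a degenerate critical point, now with $|\la_\eps| < 1$. By the previous step, $f_\eps \in \Pc^\circ$. Since $f_\eps \to f$ in $\Fc$ as $\eps \to 0^+$, we conclude $f \in \ol{\Pc^\circ} = \Pc$, completing the proof.

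No substantial obstacle arises. The heart of the matter is recognizing that a cubic in $\Fc$ with coincident critical points is essentially unicritical, so Fatou's theorem forces the single critical orbit into $A(f)$ in the attracting case. The only point needing attention is verifying, via the Riemann--Hurwitz degree count, that $A(f)$ is in fact completely invariant, so that $f$ lands in $\Pc^\circ$ (rather than some other hyperbolic component); the boundary case then reduces to continuity of the embedding $\{b^2 = 3\la\} \hookrightarrow \Fc$ combined with closedness of $\Pc$.
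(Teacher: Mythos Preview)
Your proof is correct and follows essentially the same idea as the paper's sketch preceding the lemma (which defers the full argument to \cite{bopt14b}): both show the contrapositive, namely that the discriminant locus $\{b^2=3\la,\ |\la|\le 1\}$ lies in $\Pc$, by observing that a unicritical $f\in\Fc_{at}$ must have its sole (degree~$3$) critical point in $A(f)$ and hence lie in $\Pc^\circ$. Your treatment of the boundary case $|\la|=1$ via the explicit path $(\la_\eps,b_\eps)=((1-\eps)\la,\,b\sqrt{1-\eps})$ staying inside the discriminant locus is a clean way to make precise what the paper's sketch only gestures at.
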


A critical point $c$ of a potentially renormalizable polynomial $f$ is said to be \emph{principal} if there is
a neighborhood $\Uc$ of $f$ in $\Fc$ and a holomorphic function
$\omega_1:\Uc\to\C$ defined on $\Uc$ such that $c=\omega_1(f)$, and,
for every $g\in\Uc\cap\Fc_{at}$, the point $\omega_1(g)$ is the critical
point of $g$ contained in $A(g)$.

\begin{thm}[\cite{bopt14b}] \label{t:princ}
A potentially renormalizable polynomial has a unique principal critical point.
\end{thm}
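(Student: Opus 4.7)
The plan is to extend the two critical points of $f$ to holomorphic maps $\omega_1,\omega_2$ on a neighborhood $\Uc$ of $f$ in $\Fc$, and then to argue that exactly one of the two labels is principal. By Lemma \ref{l:2crpts} the critical points of $f$ are distinct, so the discriminant of the quadratic $f'_{\la,b}$ is nonzero at $f$; the implicit function theorem then gives such $\omega_1,\omega_2$ on any sufficiently small polydisk neighborhood $\Uc$. Since $f$ is potentially renormalizable, $f\notin\Pc=\overline{\Pc^\circ}$, so I further shrink $\Uc$ to arrange $\Uc\cap\Pc^\circ=\emptyset$. For every $g\in\Uc\cap\Fc_{at}$, the immediate basin $A(g)$ of $0$ contains at least one critical point (classical Fatou) and not both (since $g\notin\Pc^\circ$), hence exactly one.

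Define
\[
S_i=\{g\in\Uc\cap\Fc_{at}\,:\,\omega_i(g)\in A(g)\},\qquad i=1,2,
\]
so that $\Uc\cap\Fc_{at}=S_1\sqcup S_2$. Each $S_i$ is open by a standard linearization argument: if $\omega_i(g_0)\in A(g_0)$, pick a Koenigs disk $D\subset A(g_0)$ around $0$ with $g_0(\overline D)\subset D$ and an $N$ with $g_0^N(\omega_i(g_0))\in D$; for $g$ sufficiently close to $g_0$ in $\Fc_{at}$ we still have $g(\overline D)\subset D$ and $g^N(\omega_i(g))\in D$, so $\omega_i(g)\in A(g)$. Next, $\Uc\cap\Fc_{at}$ is nonempty and connected for small $\Uc$: in the $(\la,b)$-coordinates, $\Fc_{at}=\{|\la|<1\}$, so the intersection of a polydisk around $f$ with this half-space is either $\Uc$ itself (when $|\la_0|<1$) or the product of a circular lune with a disk (when $|\la_0|=1$, with the perturbations $f_\eps$ from \eqref{eq:feps} witnessing nonemptiness). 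A partition of a nonempty connected set into two disjoint open subsets forces one of them to be empty, hence $\Uc\cap\Fc_{at}=S_1$ or $\Uc\cap\Fc_{at}=S_2$, and the corresponding $\omega_i(f)$ is a principal critical point.

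For uniqueness, if both critical points of $f$ were principal, then on a common neighborhood $\Uc'$ the two given labelings would agree (after possibly swapping) with the globally defined $\omega_1,\omega_2$ above, so both $\omega_1(g),\omega_2(g)$ would lie in $A(g)$ for every $g\in\Uc'\cap\Fc_{at}$; this is exactly the condition $\Uc'\cap\Fc_{at}\subset\Pc^\circ$. Since $f\in\overline{\Uc'\cap\Fc_{at}}$ (either because $f\in\Fc_{at}$ when $|\la|<1$, or because $f_\eps\to f$ from within $\Fc_{at}$ when $|\la|=1$), we would conclude $f\in\overline{\Pc^\circ}=\Pc$, contradicting potential renormalizability. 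The main technical step in this plan is the openness of $S_i$: since immediate basins can change topology under perturbation, one cannot appeal to a naive continuity of $A(\cdot)$, and the argument must instead track the orbit of $\omega_i(g)$ only until it enters a uniformly contracting Koenigs disk around $0$, at which point the property $\omega_i(g)\in A(g)$ becomes an open condition on $g$.
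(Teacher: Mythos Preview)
The paper does not give its own proof of this statement: Theorem~\ref{t:princ} is quoted from \cite{bopt14b} and used as a black box. So there is no in-paper argument to compare against, and the relevant question is simply whether your proof is sound.

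Your argument is correct and is in fact the natural one. A few comments on the details. The extension of the two critical points to holomorphic functions on a polydisk follows directly from the explicit quadratic formula for the roots of $f'_{\la,b}(z)=3z^2+2bz+\la$, once you fix a branch of the square root of the discriminant $4b^2-12\la$ on a simply connected neighborhood where it does not vanish; invoking the implicit function theorem is fine but a little heavier than needed. Your openness argument for $S_i$ is the right one; note that it works uniformly for $\la=0$ as well, since all you use is a disk $D\ni 0$ with $g_0(\overline D)\subset D$, not a Koenigs coordinate per se. The connectedness of $\Uc\cap\Fc_{at}$ is immediate once you observe that in $(\la,b)$-coordinates this set is the product of a disk in $b$ with either a disk or an intersection of two disks in $\la$, both convex. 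For uniqueness, your reduction is exactly right: two distinct principal critical points would force $\Uc'\cap\Fc_{at}\subset\Pc^\circ$ on a common neighborhood, whence $f\in\overline{\Pc^\circ}=\Pc$, contradicting potential renormalizability. The only place to be slightly careful is to note that the two holomorphic selectors $\phi_1,\phi_2$ witnessing principality of the two critical points remain distinct on a possibly smaller neighborhood (since $\phi_1(f)\ne\phi_2(f)$), so that for nearby $g$ they really are the two different critical points of $g$; you implicitly use this and it is immediate by continuity.
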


By Theorem~\ref{t:princ}, if $f\in \Fc_{nr}$ is potentially
renormalizable, then the point $\om_1(f)$ is well-defined; let the
other critical point of $f$ be $\om_2(f)$. It is easy to see that
$\om_1(f)\in K(f)$. It is proven in \cite{bopt16a} (see
Theorem~\ref{t:major} and Corollary~\ref{c:attracapt} where the
appropriate results of \cite{bopt16a} are summarized) that an
IA-capture polynomial $g$ has a repelling periodic cutpoint of the
Julia set $J(g)$. Hence an IA-capture polynomial $g$ is not in
$\mathcal{CU}$, thus not in $\mathcal{P}$, i.e., it is potentially
renormalizable, and the notation for its critical points $\om_1(g)$,
$\om_2(g)$, introduced in Definition~\ref{d:princrit}, is consistent
with the just introduced notation for all potentially renormalizable
polynomials.

Recall that, by Theorem~\ref{t:extendclo}, all polynomials in a
component $\Wc$ of $\thu(\Pc_\la)\sm\Pc_\la$ are conjugate on their
Julia set. Moreover, if some polynomial in $\Wc$ is an IS-capture,
then it is easy to see that so are all polynomials in $\Wc$. This
inspires the following definition. Let $\Wc$ be a component of
$\lambda$-stable polynomials, where $|\la|\le 1$. Then $\Wc$ is said to
be of \emph{IS-capture type} if any $f\in \Wc$ is an IS-capture polynomial.
We also say in this case that $\Wc$ is an \emph{IS-capture component}.
It is easy to construct examples of IS-captures in $\Fc_\la\sm\thu(\Pc_\la)$.

\begin{thm}[\cite{bopt14b}]\label{t:sie-quee}
Let $\Wc$ be a component of\, $\thu(\Pc_\la)\sm \Pc_\la$, where $|\la|\le 1$.
Then $\Wc$ is either of IS-\emph{capture type} or of \emph{queer} type.
\end{thm}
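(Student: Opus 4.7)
Plan. By Theorem~\ref{t:extendclo}, every $f \in \Wc$ belongs to $\mathcal{CU}$ and is $\la$-stable, so the critical orbits and the Julia set $J(f)$ move holomorphically and equivariantly as $f$ varies in $\Wc$. The two properties in the dichotomy (``every critical point lies in $J(f)$'' versus ``a critical orbit enters a Siegel disk at $0$'') are therefore constant on $\Wc$ and mutually exclusive. It suffices to fix some $f \in \Wc$, assume $f$ is not of queer type, and deduce that $f$ is IS-capture.

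Pick a critical point $c$ of $f$ with $c \notin J(f)$; it lies in a Fatou component $U$. Since polynomial Fatou components are simply connected (no Herman rings), Sullivan's non-wandering theorem forces $U$ to be eventually periodic, mapping to a periodic Fatou component $V$ of some periodic cycle $C$; thus $V$ is an attracting basin, parabolic basin, or Siegel disk of $C$. The Main Cubioid condition forces every non-repelling periodic point off $\{0\}$ to have multiplier $1$, immediately excluding attracting and Siegel cycles $C \ne \{0\}$. Parabolic cycles $C \ne \{0\}$ are excluded by $\la$-stability, since any such cycle splits into an attracting-repelling pair under arbitrarily small perturbations in $\Fc_\la$ and this breaks the equivariant holomorphic motion of $J$. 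Hence $C = \{0\}$, and $V$ is either $A(f)$ (case $|\la| < 1$), the parabolic basin at $0$ (case $\la$ a root of unity), or the Siegel disk $\Delta(f)$ at $0$ (case $\la = e^{2\pi i\theta}$ of Siegel type).

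The first two alternatives would force $f \in \Pc_\la$, contradicting $f \in \Wc$. In the attracting subcase, $\om_1(f) \in A(f)$, and $\om_2(f)$ is either in $A(f)$ (so $f \in \Pc^\circ$), eventually in $A(f)$ without lying there (so $f$ is IA-capture, whence by Corollary~\ref{c:attracapt} the Julia set carries a repelling periodic cutpoint, contradicting $f \in \mathcal{CU}$), or in $J(f)$ (in which case Milnor's cell theorem identifies the principal slice $\Pc^\circ_\la$ as a topological disk for $|\la|<1$, so $\thu(\Pc_\la) = \Pc_\la$ and no such $\Wc$ exists). In the parabolic subcase, Corollary~\ref{c:perturb2} applied to $f_\eps$ from~\eqref{eq:feps} places compact subsets of the parabolic basin of $0$ inside $A(f_\eps)$ for small $\eps$, so $c$ is captured by $A(f_\eps)$; combining with an auxiliary perturbation handling the second critical point expresses $f$ as a limit of $\Pc^\circ$ polynomials, whence $f \in \Pc \cap \Fc_\la = \Pc_\la$. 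The only remaining possibility is $V = \Delta(f)$, in which $c$ is eventually mapped into the invariant Siegel disk at $0$ and $f$ is IS-capture by definition.

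The hard part is the exclusion of the attracting and parabolic subcases, which combines Milnor's description of hyperbolic components with the perturbation machinery of Subsection~\ref{ss:pl} (notably Corollary~\ref{c:perturb2}) to show that $f$ would in fact lie inside $\Pc_\la$ rather than in a genuine hole of it; by contrast, the Siegel subcase is essentially automatic from the $\mathcal{CU}$ constraint alone.
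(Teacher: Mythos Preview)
This theorem is quoted from \cite{bopt14b} and is not proved in the present paper; there is no proof here to compare against. Your outline is the natural one (classify the Fatou component containing a non-Julia critical point and use the $\mathcal{CU}$ constraints to pin the periodic cycle at $0$), and the Siegel subcase is handled correctly. However, two of your exclusion arguments have genuine gaps.

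First, in the parabolic subcase you invoke Corollary~\ref{c:perturb2} to place compact subsets of the parabolic basin of $0$ inside $A(f_\eps)$. As stated, Corollary~\ref{c:perturb2} (and Lemma~\ref{l:perturb1} it relies on) assumes a \emph{Siegel disk} around $0$; it does not apply to a parabolic basin, where the linearization hypothesis fails and the Buff--Petersen estimate is not available in that form. Your argument for the parabolic case therefore does not go through, and the follow-up line ``combining with an auxiliary perturbation handling the second critical point'' is too vague to count as a proof. One really needs a separate argument (as in \cite{bopt14b}) to push $f$ into $\Pc_\la$ when $\la$ is a root of unity.

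Second, in the attracting subcase with $\om_2(f)\in J(f)$ you assert that Milnor's cell theorem gives $\thu(\Pc_\la)=\Pc_\la$ for $|\la|<1$. Milnor's theorem describes the hyperbolic component $\Pc^\circ$ in $\Fc$; it tells you $\Pc^\circ\cap\Fc_\la$ is a cell, but $\Pc_\la=\ol{\Pc^\circ}\cap\Fc_\la$ is a priori larger than the closure of $\Pc^\circ\cap\Fc_\la$ in $\Fc_\la$, so simple connectivity of $\Pc_\la$ does not follow immediately. You need to argue, for $|\la|<1$, either that these two closures agree or directly that $\thu(\Pc_\la)\sm\Pc_\la=\0$; neither step is supplied. (Your exclusion of parabolic cycles $C\ne\{0\}$ via $\la$-stability is morally right but also needs the one-line check that the multiplier of such a cycle is nonconstant on $\Fc_\la$.)
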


By Theorem C, which we prove later on, the first possibility listed in Theorem~\ref{t:sie-quee} is impossible.

Let us introduce notation, which will be used in the rest of this
section. Let $f$ be an IS-capture polynomial. As before, we write
$\Delta(f)$ for the Siegel disk of $f$. Recall that $f$ has two
distinct critical points $\mathrm{re}(f)$ and $\mathrm{ca}(f)$ (``re''
from ``recurrent'' and ``ca'' from ``captured''), see Definition
\ref{d:compotypes}. The point $\mathrm{re}(f)$ is recurrent, and the
closure of its forward orbit includes $\bd(\Delta(f))$. The point
$\mathrm{ca}(f)$ is eventually mapped to $\Delta(f)$. Let $m_f>0$ be
the smallest positive integer for which we have
$f^{m_f}(\mathrm{ca}(f))\in \Delta(f)$. Observe that, given
sufficiently small $\eps>0$, for all polynomials $g$ close enough to
$f$, there exist a unique critical point $\mathrm{re}(g)$ of $g$ that
is $\eps$-close to $\mathrm{re}(f)$ and a unique critical point
$\mathrm{ca}(g)$ of $g$ that is $\eps$-close to $\mathrm{ca}(f)$.
Notice that the functions $\mathrm{re}(g)$ and $\mathrm{ca}(g)$ are
holomorphic functions of the coefficients of $g$. However $\re(g)$ is
not necessarily recurrent, and $g$ may not have a Siegel invariant
domain.

Lemma~\ref{l:zeta-out} is based on special perturbations
\eqref{eq:feps} introduced right before Corollary \ref{c:perturb2}.
Namely, recall that for $f(z)=f_{\lambda,b}(z)=\lambda z+b z^2+z^3\in
\Fc_{nr}$ with $|\la|\le 1$ we denote by $f_\eps$ the polynomial
$f_{(1-\eps)\lambda, b}(z)=(1-\eps) \lambda z+ bz^2+z^3\in \Fc_{at}$,
where $\eps>0$ is a small positive number.

\begin{lem}\label{l:zeta-out}
Suppose that $f$ is an IS-capture polynomial.
Then, for sufficiently small $\eps>0$, we have $\mathrm{re}(f_\eps)\in A(f_\eps)$.
In particular, if $f_\eps\notin \Pc^\circ$, then $\mathrm{ca}(f_\eps)\notin A(f_\eps)$.
\end{lem}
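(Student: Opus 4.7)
The strategy is to combine Corollary~\ref{c:perturb2} (which places large compact subsets of $\Delta(f)$ inside $A(f_\eps)$ for small $\eps>0$), Fatou's theorem on immediate basins, and the Blaschke product analysis of Subsection~\ref{ss:bp}. First, choose a closed topological disk $\widetilde K\subset\Delta(f)$ containing both $0$ and $f^{m_f}(\mathrm{ca}(f))$ in its interior. By Corollary~\ref{c:perturb2}, $\widetilde K\subset A(f_\eps)$ for all sufficiently small $\eps>0$, and by holomorphic dependence of $\mathrm{ca}(f_\eps)$ on the coefficients we also have $f_\eps^{m_f}(\mathrm{ca}(f_\eps))\in\widetilde K$ for such $\eps$. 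Thus $\mathrm{ca}(f_\eps)$ lies in the full basin of $0$ under $f_\eps$, and by Fatou's theorem the immediate basin $A(f_\eps)$ must contain at least one critical point of $f_\eps$.

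The main task is to show that this critical point must be $\mathrm{re}(f_\eps)$. Suppose for contradiction that $\mathrm{re}(f_\eps)\notin A(f_\eps)$ and $\mathrm{ca}(f_\eps)\in A(f_\eps)$. Since the immediate basin of an attracting fixed point of a polynomial is simply connected, Riemann--Hurwitz applied to $f_\eps|_{A(f_\eps)}$ (which contains exactly one critical point, namely $\mathrm{ca}(f_\eps)$) shows that $f_\eps|_{A(f_\eps)}$ is a proper holomorphic map of degree $2$. Uniformizing by a Riemann map $\phi_\eps:A(f_\eps)\to\disk$ with $\phi_\eps(0)=0$ and adjusting by a rotation, we conjugate $f_\eps|_{A(f_\eps)}$ to the normalized quadratic Blaschke product $Q_{a_\eps}$ from Definition~\ref{d:bp} with $a_\eps=(1-\eps)\la$, and then $\phi_\eps(\mathrm{ca}(f_\eps))=c_{a_\eps}$.

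The contradiction will come from comparing two estimates on $|Q_{a_\eps}^{m_f}(c_{a_\eps})|$. For the upper bound, fix a slightly larger closed disk $\widetilde K_1\subset\Delta(f)$ containing $\widetilde K$ in its interior; Corollary~\ref{c:perturb2} gives $\widetilde K_1\subset A(f_\eps)$ for small $\eps$. The annulus $\widetilde K_1\sm\widetilde K$ sits in $A(f_\eps)\sm\widetilde K$ as a non null-homotopic subannulus (both loop around $\widetilde K\ni 0$), so monotonicity of modulus (Theorem~\ref{t:annuprop}) yields $m(A(f_\eps)\sm\widetilde K)\ge m_0:=m(\widetilde K_1\sm\widetilde K)>0$ uniformly in small $\eps$. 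Transferring through the conformal map $\phi_\eps$ and applying the second part of Theorem~\ref{t:annuprop} to the non-separating continuum $\phi_\eps(\widetilde K)\subset\disk$, we obtain $d(\phi_\eps(\widetilde K),\uc)\ge\delta^*:=\psi(m_0)>0$, independent of $\eps$. Since $f_\eps^{m_f}(\mathrm{ca}(f_\eps))\in\widetilde K$, this forces $|Q_{a_\eps}^{m_f}(c_{a_\eps})|=|\phi_\eps(f_\eps^{m_f}(\mathrm{ca}(f_\eps)))|\le 1-\delta^*$.

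For the matching lower bound, since $f$ has a Siegel disk at $0$ we have $\la=e^{2\pi i\ta}$ for some irrational $\ta$, and $a_\eps=(1-\eps)\la\to\la$ radially as $\eps\to 0$. Lemma~\ref{l:bpcror} applied with $s=\la$, $m=m_f$, and $\delta^*$ in place of the $\eps$ there yields $|Q_{a_\eps}^{m_f}(c_{a_\eps})|>1-\delta^*$ for all sufficiently small $\eps>0$, contradicting the previous estimate. Hence $\mathrm{re}(f_\eps)\in A(f_\eps)$. The ``in particular'' claim follows at once: if $f_\eps\notin\Pc^\circ$, then $A(f_\eps)$ cannot contain both critical points of $f_\eps$, so $\mathrm{ca}(f_\eps)\notin A(f_\eps)$. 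The main obstacle is producing the uniform modulus bound $m_0$ that yields a $\delta^*$ independent of $\eps$; once that uniformity is secured, Lemma~\ref{l:bpcror} closes the argument immediately.
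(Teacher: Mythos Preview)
Your proof is correct and follows essentially the same approach as the paper's: both arguments use Corollary~\ref{c:perturb2} to place a fixed compact $\widetilde K\subset\Delta(f)$ (containing $f^{m_f}(\mathrm{ca}(f))$) into $A(f_\eps)$, assume by contradiction that $\mathrm{ca}(f_\eps)$ is the unique critical point in $A(f_\eps)$, uniformize $f_\eps|_{A(f_\eps)}$ to a normalized Blaschke product $Q_{a_\eps}$ with $a_\eps=(1-\eps)\la$, and then derive incompatible estimates on $|Q_{a_\eps}^{m_f}(c_{a_\eps})|$ from the modulus bound (Theorem~\ref{t:annuprop}) on one side and Lemma~\ref{l:bpcror} on the other. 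The only cosmetic difference is that the paper phrases the contradiction via a sequence $\eps_n\to 0$, whereas you obtain uniform bounds directly; the content is the same.
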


\begin{proof}
Set $f=f_{\la, b}$. Then $\la=e^{2\pi i \ta}$, where $\ta$ is irrational.
Take a closed Jordan disk $K$ and an open Jordan disk $U$ such that
$$
0\in K\subset U\subset\ol U\subset \Delta(f).
$$
We may assume that $f^{m_f}(\mathrm{ca}(f))$ lies in the interior of $K$.

Observe that if $f_{\eps}\in \Pc^\circ$ then $\mathrm{re}(f_\eps)\in
A(f_\eps)$ as desired. In particular, if for sufficiently small
$\eps>0$ we have that $f_{\eps}\in \Pc^\circ$, then we are done. Thus
we need to consider the case when there are positive values of $\eps$
arbitrarily close to $0$ and such that $f_{\eps}\notin \Pc^\circ$. We
need to show that $\mathrm{re}(f_\eps)\in A(f_\eps)$ for all these values
of $\eps$. Observe that in any case at least one critical point must
belong to $A(f_\eps)$ for all $\eps>0$. Hence, if
$\mathrm{ca}(f_\eps)\notin A(f_\eps)$ for some $\eps>0$, then
$\mathrm{re}(f_\eps)\in A(f_\eps)$ for this $\eps$ as desired. Thus, to
prove the lemma it would suffice to prove the following claim.

\medskip

\noindent \textbf{Claim}. \emph{For sufficiently small $\eps>0$, if
$f_{\eps}\notin \Pc^\circ$ then $\mathrm{ca}(f_\eps)\notin A(f_\eps)$.}

\medskip

\noindent \emph{Proof of the Claim}. Suppose that there are positive
values of $\eps$ arbitrarily close to $0$ and such that $f_{\eps}\notin
\Pc^\circ$. Moreover, suppose by way of contradiction that the Claim
fails. Then there exists a sequence $\eps_n\to 0$ with
$f_{\eps_n}\notin \Pc^\circ$ and $\mathrm{ca}(f_{\eps_n})\in
A(f_{\eps_n})$. Since $f_{\eps_n}\notin \Pc^\circ$, then
$\mathrm{ca}(f_{\eps_n})$ is the only critical point in
$A(f_{\eps_n})$. A Riemann map $\vp:A(f_{\eps_n})\to \disk$ with
$\vp(0)=0$ conjugates $f_{\eps_n}|_{A(f_{\eps_n})}$ with a normalized
quadratic Blaschke product $Q_{a_n}$, where $a_n\in
\disk$. Then $\vp(\mathrm{ca}(f_{\eps_n}))=c_{a_n}$ is the
unique critical point of $Q_{a_n}$ in $\disk$.
This yields the following contradiction.

\begin{enumerate}

\item[(i)] By Lemma~\ref{l:bpcror},  the point $Q^{m_f}_{a_n}(c_{a_n})$ approaches the unit circle as $\eps_n\to 0$.

\item[(ii)] By Corollary~\ref{c:perturb2} and by continuity, the point $Q^{m_f}_{a_n}(c_{a_n})$
is bounded away from the unit circle
as $\eps_n\to 0$.
\end{enumerate}

A more detailed proof follows.

(i)  Clearly, the multiplier
$(1-\eps_n)\la$ of $f_{\eps_n}$ at $0$ converges to $\la=e^{2\pi i
\ta}$. It follows that the multiplier of $Q_{a_{\eps_n}}$ at $0$ also
converges to $\la$. By Lemma~\ref{l:bpcror}, the point
$Q^{m_f}_{a_n}(c_{a_n})$ approaches the unit circle as
$\eps_n\to 0$.

(ii) On the other hand, take a polynomial $f_\eps$ with small $\eps>0$.
By Corollary~\ref{c:perturb2}, we have $\ol U\subset A(f_\eps)$ for all
sufficiently small $\eps>0$. By continuity,
$f^{m_f}_\eps(\mathrm{ca}(f_\eps))\in K$ if $\eps>0$ is sufficiently
small. Thus, the point $f^{m_f}_{\eps_n}(\mathrm{ca}(f_{\eps_n}))$ is
separated from $\bd(A(f_{\eps_n}))$ by the annulus $U\sm K$ of a
definite positive modulus. It follows, by the conformal invariance of
the modulus, that the point $Q^{m_f}_{a_{\eps_n}}(c_{a_n})$ must
also be separated from $\uc$ by an annulus of a definite positive
modulus. However, this contradicts Theorem~\ref{t:annuprop} and the
conclusions of (i) above.
\end{proof}

Recall (Definition~\ref{d:princrit}) that for an IA-capture 
polynomial $f$ we denote by $\om_1(f)$ its critical point that belongs
to $A(f)$ and by $\om_2(f)$ its critical point that does not belong to
$A(f)$ but eventually (after one or more iterations) maps into $A(f)$.
Observe that our notation for critical points $\om_1(f)$ and $\om_2(f)$ is consistent with Definition~\ref{d:princrit}.
Finally, recall that by potentially renormalizable polynomials we mean polynomials in $\Fc$ that do not belong to $\Pc=\ol{\Pc^\circ}$.

\begin{cor}
 \label{c:cri-id}
 Suppose that $f$ is an IS-capture polynomial.
 If $f$ is potentially renormalizable, then $\om_1(f)=\mathrm{re}(f)$ and $\om_2(f)=\mathrm{ca}(f)$.
\end{cor}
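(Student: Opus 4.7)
The plan is a short proof by contradiction that hinges on Lemma \ref{l:zeta-out} together with the very definition of the principal critical point. First I would observe that, by Lemma \ref{l:2crpts}, $f$ has two distinct critical points, and, since $f$ is an IS-capture, they must be $\mathrm{re}(f)$ and $\mathrm{ca}(f)$ in the notation of Definition \ref{d:compotypes}. By Theorem \ref{t:princ}, the principal critical point $\om_1(f)$ exists and is unique, hence $\om_1(f)\in\{\mathrm{re}(f),\mathrm{ca}(f)\}$. It therefore suffices to rule out the possibility $\om_1(f)=\mathrm{ca}(f)$, since in the remaining case the equality $\om_2(f)=\mathrm{ca}(f)$ follows automatically.

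Suppose for contradiction that $\om_1(f)=\mathrm{ca}(f)$. By the definition of the principal critical point, there is a neighborhood $\Uc$ of $f$ in $\Fc$ and a holomorphic function $\om_1:\Uc\to\C$ with $\om_1(f)=\mathrm{ca}(f)$ such that $\om_1(g)\in A(g)$ for every $g\in\Uc\cap\Fc_{at}$. Since $\mathrm{ca}(g)$ is also holomorphic in $g$, takes the value $\mathrm{ca}(f)$ at $g=f$, and the two critical points of $g$ remain distinct for $g$ close to $f$, continuity forces $\om_1(g)=\mathrm{ca}(g)$ on a (possibly smaller) neighborhood of $f$. Applying this to the special perturbations $f_\eps\in\Fc_{at}$ from \eqref{eq:feps}, we conclude that $\mathrm{ca}(f_\eps)\in A(f_\eps)$ for all sufficiently small $\eps>0$.

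To reach a contradiction, I would exploit the hypothesis that $f$ is potentially renormalizable, i.e., $f\notin\Pc=\ol{\Pc^\circ}$. Then $f$ admits a neighborhood in $\Fc$ disjoint from $\Pc^\circ$, so $f_\eps\notin\Pc^\circ$ for all sufficiently small $\eps>0$. But Lemma \ref{l:zeta-out} then asserts that $\mathrm{ca}(f_\eps)\notin A(f_\eps)$ for such $\eps$, contradicting the previous paragraph. Therefore $\om_1(f)=\mathrm{re}(f)$ and $\om_2(f)=\mathrm{ca}(f)$. There is essentially no obstacle in this argument beyond the identification of the holomorphic function $\om_1$ with the holomorphic continuation of $\mathrm{ca}$ near $f$, which is immediate from the distinctness of the two critical points; all the genuine dynamical work has already been carried out in Lemma \ref{l:zeta-out}.
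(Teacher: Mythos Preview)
Your argument is correct and follows essentially the same route as the paper: both use that $f\notin\Pc$ forces $f_\eps\notin\Pc^\circ$ for small $\eps$, and then invoke Lemma~\ref{l:zeta-out} together with the definition of the principal critical point. The only cosmetic difference is that the paper argues directly (from $\mathrm{re}(f_\eps)\in A(f_\eps)$ conclude $\om_1(f)=\mathrm{re}(f)$), whereas you phrase the same step as a contradiction.
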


\begin{proof}
Since $f$ is potentially renormalizable, all
maps
$f_\eps$ of $f$ are outside $\Pc^\circ$ if $\eps$ is small. By definition
and Lemma \ref{l:zeta-out}, $\mathrm{re}(f)=\om_1(f)$ and $\mathrm{ca}(f)=\om_1(f)$.
\end{proof}

According to Definition~\ref{d:princrit},
a polynomial $f\in \Fc_{at}$ is said to be an \emph{IA-capture (polynomial)} (IA from ``Invariant Attracting'')
if it has a critical point that does not belong to the basin of immediate attraction $A(f)$ of $0$
but  maps to $A(f)$ under a finite iteration of $f$.
A hyperbolic component in $\Fc$ is said to be an \emph{IA-capture
component} if it contains an IA-capture polynomial;
it is easy to see that then all polynomials in it are IA-capture polynomials.
It is well known that all polynomials in an IA-capture component are topologically conjugate on their Julia sets.
Moreover, if $\Wc$ is a hyperbolic component non-disjoint from
$\Fc_{at}$ such that polynomials in $\Wc$ have a critical point which maps
into a cycle of attracting Fatou domains but does not belong to it,
then $\Wc\subset \Fc_{at}$ is an IA-capture component consisting of polynomials $f$ with
an \emph{invariant} attracting Fatou domain $A(f)\ni 0$, a well-defined
critical point $\omega_1(f)\in A(f)$ and a well-defined critical point
$\om_2(f)=\mathrm{ca}(f)\notin A(f)$ such that for some minimal $m_f>0$
we have $f^{m_f}(\om_2(f))\in A(f)$.

\begin{thm}
  \label{t:1hypcomb}
  If $f\in\Fc_{nr}$ is an IS-capture polynomial, then $f$ belongs to the boundary of exactly
  one bounded hyperbolic component $\Wc$ in $\Fc_{at}$.
  Every polynomial $g\in \Wc$ has a locally connected Julia set so that
  $\lam_g=\ol{\lam^r_g}$, and $\Wc$ is either $\Pc^\circ$, or an IA-capture component.
\end{thm}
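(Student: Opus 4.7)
The plan is to first produce a bounded hyperbolic component $\Wc\subset\Fc_{at}$ with $f\in\bd(\Wc)$ by an explicit perturbation, then to verify $\lam_g=\ol{\lam^r_g}$ for every $g\in\Wc$, and finally to establish uniqueness via rational laminations.

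For existence I would apply the perturbation $f_\eps$ from Subsection~\ref{ss:pl}. By Lemma~\ref{l:zeta-out}, $\mathrm{re}(f_\eps)\in A(f_\eps)$ for all sufficiently small $\eps>0$. Corollary~\ref{c:perturb2} applied to a closed Jordan neighborhood of $f^{m_f}(\mathrm{ca}(f))$ inside $\Delta(f)$, together with continuity of $\mathrm{ca}(f_\eps)$ and of its $m_f$-th iterate in $\eps$, also gives $f_\eps^{m_f}(\mathrm{ca}(f_\eps))\in A(f_\eps)$. Thus every critical orbit of $f_\eps$ is trapped in the basin of $0$, so $f_\eps$ is hyperbolic and lies in $\Fc_{at}$: either $\mathrm{ca}(f_\eps)\in A(f_\eps)$, in which case $f_\eps\in\Pc^\circ$, or $f_\eps$ is an IA-capture polynomial. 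Let $\Wc$ be the hyperbolic component of $f_\eps$; it is bounded and $f\in\bd(\Wc)$. The identity $\lam_g=\ol{\lam^r_g}$ on $\Wc$ is then immediate from Lemma~\ref{l:ratiofull}, since hyperbolicity places every critical point of $g$ in the interior of an attracting Fatou component, so no critical point with infinite orbit lies on the boundary of a bounded Fatou domain.

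For uniqueness, let $\Wc'$ be any bounded hyperbolic component with $f\in\bd(\Wc')$ and pick $g\in\Wc'$ very close to $f$. Write $\la=f'(0)=e^{2\pi i\ta}$ with $\ta$ irrational. By Ma\~{n}\'{e}'s theorem both critical orbits of $f$ are accounted for by $\Delta(f)$, so the only non-repelling periodic point of $f$ is $0$; in particular the two remaining fixed points of $f_{\la,b}$ are repelling for $f$, hence also for nearby $g$. Moreover $\la^n\neq 1$ for every $n\geq 1$, so no higher-period cycle of $g$ can collapse onto $0$ under $g\to f$. Consequently $g$ has $0$ as its only attracting cycle, $\Wc'\subset\Fc_{at}$, and $\Wc'=\Pc^\circ$ or $\Wc'$ is of IA-capture type. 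Lemma~\ref{l:rep} additionally yields $\lam^r_f\subseteq\lam^r_g\cap\lam^r_{f_\eps}$ by persistence of smooth periodic rays at repelling periodic points.

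To finish I would show $\lam^r_g=\lam^r_{f_\eps}$. In the $\Pc^\circ$ case both rational laminations are empty. In the IA-capture case, Theorem~\ref{t:major} together with Corollary~\ref{c:attracapt} picks out, for each of $f_\eps$ and $g$, a unique quadratic invariant gap of the full lamination whose periodic major corresponds to a repelling periodic cutpoint of the Julia set; Lemma~\ref{l:rep} shows that this major and its forward and backward orbit already sit inside $\lam^r_f$, while the surrounding gap is determined combinatorially by the boundary of the invariant Fatou component at $0$, which is prescribed by $\bd(\Delta(f))$ in the limit. Hence $\lam^r_g=\lam^r_{f_\eps}$, so $\lam_g=\lam_{f_\eps}$ by the already established identity with $\ol{\lam^r}$, and the converse part of Corollary~\ref{c:samecomp} yields $\Wc=\Wc'$. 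The main obstacle will be this last step: rigorously matching up the quadratic invariant gaps of $\lam_{f_\eps}$ and $\lam_g$ with a single combinatorial gap prescribed by $f$, and ruling out accidental extra leaves arising from the degeneration of $A(f_\eps)$ or $A(g)$ onto $\Delta(f)$.
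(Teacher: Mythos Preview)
Your existence argument and the verification of $\lam_g=\ol{\lam^r_g}$ match the paper. The gap is in uniqueness, precisely at the step you flag as ``the main obstacle.''

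You obtain $\lam^r_f\subset\lam^r_g$ and $\lam^r_f\subset\lam^r_{f_\eps}$ via Lemma~\ref{l:rep}, but then try to prove $\lam^r_g=\lam^r_{f_\eps}$ by matching the quadratic invariant gaps combinatorially. This is both harder than necessary and incomplete as stated: knowing that the periodic majors agree does not by itself force equality of the full rational laminations, and your sketch of how the gap is ``prescribed by $\bd(\Delta(f))$ in the limit'' is not a proof. The paper avoids this entirely by proving the \emph{reverse} inclusions $\lam^r_{\Wc}\subset\lam^r_f$ and $\lam^r_{\Wc'}\subset\lam^r_f$ directly. Take any leaf $\ol{\al\be}\in\lam^r_{\Wc'}$; for $g_n\in\Wc'$ with $g_n\to f$, the rays $R_{g_n}(\al)$, $R_{g_n}(\be)$ land at a common repelling (pre)periodic point. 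By Corollary~\ref{c:converge}, either $\ol{\al\be}\in\lam^r_f$ or one of $R_f(\al)$, $R_f(\be)$ lands at a parabolic point of $f$. But an IS-capture has no parabolic periodic points (the only non-repelling cycle is the Siegel fixed point $0$), so $\ol{\al\be}\in\lam^r_f$. This gives $\lam^r_{\Wc}=\lam^r_{\Wc'}=\lam^r_f$, hence $\lam_{\Wc}=\lam_{\Wc'}$ by the identity $\lam=\ol{\lam^r}$ you already established, and Corollary~\ref{c:samecomp} finishes. No combinatorial analysis of invariant gaps is needed.

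A minor remark: the theorem as stated concerns bounded hyperbolic components already assumed to lie in $\Fc_{at}$, so your paragraph arguing $\Wc'\subset\Fc_{at}$ is unnecessary here (that issue is handled separately in the proof of Theorem~A).
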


\begin{proof}
First we consider maps $f_\eps$.
By Lemma~\ref{l:zeta-out}, for some $\delta>0$ and any
$\eps>0$ with $\eps<\delta$, we have $\mathrm{re}(f)\in A(f_\eps)$.
By Corollary \ref{c:perturb2} and continuity, $f^{m_f}(\mathrm{ca}(f_\eps))\in A(f_\eps)$.
Thus, $f_\eps$ is hyperbolic, and there is a unique
hyperbolic component $\Uc$ of $\Fc$ containing all polynomials $f_\eps$ with $\eps<\delta$.
Clearly, $\Uc$ is either $\Pc^\circ$, or an IA-capture component.

By way of contradiction, assume now that $\Uc$ and $\Vc$ are different
bounded hyperbolic components in $\Fc_{at}$ whose boundaries contain
$f$. All polynomials in $\Uc$ have locally connected Julia sets, are
conjugate on their Julia sets, and give rise to the same cubic
invariant lamination $\lam_{\Uc}$; similarly, all polynomials in $\Vc$
give rise to the same cubic lamination $\lam_{\Vc}$ (cf. Corollary
\ref{c:samecomp}).
Since, for a hyperbolic polynomial, the iterated forward images of a critical point cannot lie on the boundary of a
Fatou component, then, by Lemma \ref{l:ratiofull}, we have
$\lam_{\Uc}=\ol{\lam^r_{\Uc}}$ and $\lam_{\Vc}=\ol{\lam^r_{\Vc}}$ where
$\lam^r_{\Uc}$ and $\lam^r_{\Vc}$ are the corresponding rational
laminations.

Consider a leaf $\ell\in\lam^r_f$. It corresponds to a (pre)periodic
point in $J(f)$. Since all periodic points in $J(f)$ are repelling,
then, by Lemma \ref{l:rep}, we have $\ell\in\lam_{\Uc}$ and
$\ell\in\lam_{\Vc}$. Since this holds for any $\ell\in\lam^r_f$, we
conclude that $\lam^r_f\subset \lam^r_{\Uc}$ and
$\lam^r_f\subset\lam^r_{\Vc}$. Now consider a leaf
$\ol{\alpha\beta}\in\lam^r_{\Uc}$. Then $R_g(\alpha)$, $R_g(\beta)$
land at the same (pre)periodic point $x_g$, for every $g\in\Uc$. The
periodic cycle, into which the point $x_g$ eventually maps, is repelling.
Consider a sequence $g_n\in\Uc$ converging to $f$. By Corollary
\ref{c:converge} applied to this sequence, we have
$\ol{\alpha\beta}\in\lam^r_f$. Since $\ol{\alpha\beta}$ is an arbitrary
leaf of $\lam^r_{\Uc}$, we conclude that $\lam^r_\Uc\subset \lam^r_f$.
Similarly, $\lam^r_\Vc\subset\lam^r_f$. Together with the opposite
inclusions proved earlier, this implies that
$\lam^r_\Uc=\lam^r_\Vc=\lam^r_f$. By the first paragraph, it follows
that $\lam_\Uc=\lam_\Vc$. Finally, by Corollary \ref{c:samecomp}, we
have $\Uc=\Vc=\Wc$.
\end{proof}

\begin{proof}[Proof of Theorem A]
Let $f\in\Fc_\la$ be an IS-capture polynomial. By Theorem
\ref{t:1hypcomb}, there is a unique bounded hyperbolic component $\Uc$
in $\Fc_{at}$ with $f\in\bd(\Uc)$. A priori, there could exist a
different hyperbolic component $\Vc$ outside of $\Fc_{at}$ with
$f\in\bd(\Vc)$. Since for $g\in\Vc$ the fixed point $0$ is repelling,
there is a periodic angle $\theta$ such that $R_g(\theta)$ lands at $0$
for all $g\in\Vc$. Consider a sequence $g_n\in\Vc$ converging to $f$.
By Lemma \ref{l:land-ratio}, the ray $R_f(\theta)$ lands at a periodic
point $y\ne 0$ (recall that $0$ is a Siegel point). By Lemma
\ref{l:rep}, the point $y$ is parabolic. However, an IS-capture has no
parabolic periodic points, a contradiction. Thus, $\Uc$ is the only
bounded hyperbolic component in $\Fc$ containing $f$ in
its boundary. It remains to observe that, if $\Uc$ is an IA-capture,
then, by Corollary \ref{c:attracapt}, the polynomial $f$ has a repelling
periodic cutpoint in its Julia set.
\end{proof}

\section{Existence of IS-capture components}
In this section, we find IS-capture components on the boundary of $\Pc^\circ$ as well as on the boundaries of IA-capture components.
Thus we will prove Theorem B.

Let $\Uc$ be an IA-capture component in $\Fc$.
Then, for every $f\in\Uc$, we write $A(f)$ for the immediate attracting basin of $0$.
There is a unique critical point $\om_2(f)$ not in $A(f)$, and we have $f^{m_f}(\om_2(f))\in A(f)$ for some positive integer $m_f$.
We may assume that $m_f$ is the smallest positive integer with this property.
Observe that $m_f$ does not depend on $f$; it depends only on $\Uc$.
We call this integer the \emph{preperiod} of $\Uc$.

\begin{lem}
  \label{l:ag}
  Let $\Uc$ be a hyperbolic component in $\Fc$ that is either $\Pc^\circ$ or an IA-capture component.
  In the latter case, let $m$ be the preperiod of $\Uc$; in the former case, set $m=2$.
  For every Brjuno $\theta\in \R/\Z$ and every $n\ge m$, there exists
  a map $f\in\bd(\Uc)\cap\Fc_\lambda$, where $\lambda=e^{2\pi i\theta}$ and $f^n(c)=0$ for some critical point $c$ of $f$.
  Additionally, it can be arranged that $f^k(c)\ne 0$ for $k<n$.
\end{lem}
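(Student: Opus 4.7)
The strategy is to construct $f$ as a subsequential limit of hyperbolic polynomials $f_k\in\Uc$ satisfying $f_k^n(\om_2(f_k))=0$, with multipliers $\la_k=f_k'(0)$ tending radially to $\la=e^{2\pi i\theta}$ from inside $\disk$. Inside $\Uc$ I would rely on the Blaschke parameterization: for each $g\in\Uc$ the immediate basin $A(g)$ is simply connected and forward invariant, so a Riemann map $\phi_g:(A(g),0)\to(\disk,0)$ conjugates $g|_{A(g)}$ to the normalized quadratic Blaschke product $Q_{\la(g)}$ of Section \ref{ss:bp} (or a degree-three analog when $\Uc=\Pc^\circ$). Together with a ``landing parameter'' $\tau(g)=\phi_g(g^m(\om_2(g)))\in\disk$ (and an analogous parameter recording the relative position of the two critical points when $\Uc=\Pc^\circ$, which matches the choice $m=2$ in the statement), the map $(\la,\tau):\Uc\to\disk\times\disk$ is a biholomorphism, by Milnor's cell theorem together with the standard Blaschke transfer principle.

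For each $\la_k$ in the radial sequence, the plan is to select $\zeta_k\in\disk\setminus\{0\}$ satisfying $Q_{\la_k}^{n-m}(\zeta_k)=0$ and $Q_{\la_k}^{j}(\zeta_k)\ne 0$ for $0<j<n-m$; such $\zeta_k$ exist because $Q_{\la_k}$ has degree two on $\disk$, producing new preimages of $0$ at every level. The biholomorphism above then yields a unique $f_k\in\Uc\cap\Fc_{\la_k}$ with $\tau(f_k)=\zeta_k$, and by construction $f_k^n(\om_2(f_k))=0$ while $f_k^j(\om_2(f_k))\ne 0$ for $j<n$. Compactness of the connectedness locus in $\Fc$ lets me extract a subsequential limit $f_k\to f=f_{\la,b_0}$, and $f\in\bd(\Uc)\cap\Fc_\la$, since $|\la|=1$ excludes $f$ from $\Fc_{at}\supset\Uc$ while $f$ is approached from within $\Uc$. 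Continuity of the critical points and iterates as polynomials in the coefficients then gives $f^n(\om_2(f))=\lim_k f_k^n(\om_2(f_k))=0$.

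The step I expect to be the main obstacle is the minimality clause $f^k(\om_2(f))\ne 0$ for $k<n$. For $k<m$ this is immediate: $f_k^k(\om_2(f_k))$ lies in a Fatou component disjoint from $A(f_k)$ while $0$ is interior to $A(f_k)$, so the limit cannot equal $0$. For $m\le k<n$ the argument is delicate: by Lemma \ref{l:limit-onk} the Blaschke products $Q_{\la_k}$ degenerate to the rotation $\mathrm{R}_\la$ as $\la_k\to\la$, so the iterates $Q_{\la_k}^{k-m}(\zeta_k)$ typically accumulate on $\uc$, and Lemma \ref{l:bpcror} is the right tool to control the corresponding dynamical-plane orbits. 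The Brjuno hypothesis guarantees that $f$ has a genuine Siegel disk $\Delta(f)\ni 0$ whose boundary is forward invariant, which rules out a limit orbit crossing through $\bd(\Delta(f))$ to reach $0$. Should the limit polynomial $f$ still satisfy $f^k(\om_2(f))=0$ for some $k<n$, then $f$ realizes the lemma with a smaller exponent; to recover the precise $n$ requested, I would choose the branch of preimages $\zeta_k$ in the Blaschke model so as to separate the landing events at different levels, which amounts to a transversality argument in the slice $\Fc_\la$.
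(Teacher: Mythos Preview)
Your overall plan --- produce $f_i\in\Uc$ with $f_i^n(\om_2(f_i))=0$, multiplier tending to $\la=e^{2\pi i\theta}$, and pass to a limit $f\in\bd(\Uc)\cap\Fc_\la$ --- is the same skeleton as the paper's proof. The paper, however, builds the approximants differently: rather than invoking a global biholomorphism $(\la,\tau):\Uc\to\disk\times\disk$, it first uses a qc-deformation argument (Lemma~\ref{l:center}) and, for $\Pc^\circ$, the parameterization of \cite{PT09} (Lemma~\ref{l:center0}) to exhibit \emph{one} such polynomial in each slice $\Fc_\la\cap\Uc$; it then packages these as the algebraic curve $\Xc_n=\{f:f^n(c)=0,\ n\text{ minimal}\}$ and shows via the Open Mapping Theorem that $\mu(f)=f'(0)$ maps $\Xc_n\cap\Uc$ onto $\disk$ (up to finitely many punctures), whence $\mu$ takes every value in $\uc$ on $\ol{\Xc_n\cap\Uc}$. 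Your Blaschke-parameterization shortcut is plausible (it is essentially Milnor's reduced-mapping-scheme coordinates) but would need justification, and in the $\Pc^\circ$ case the ``degree-three analog'' and the role of $m=2$ are not made precise.

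The genuine gap is the minimality clause for $m\le j<n$. Your proposed tools --- Lemmas~\ref{l:limit-onk} and~\ref{l:bpcror}, forward invariance of $\bd(\Delta(f))$, or an unspecified ``transversality'' --- do not close it. What actually works, and what the paper's citation of \cite{BuPe} is pointing to, is the following. Because $\theta$ is Brjuno, $f$ has a Siegel disk $\Delta(f)$; fix a closed disk $D_\delta\subset\Delta(f)$ about $0$. On $D_\delta$, $f$ is conjugate to an irrational rotation, so for each $0<j<n$ the map $f^{n-j}$ has $0$ as its \emph{only} zero in $D_\delta$, and this zero is simple. By Rouch\'e, the same holds for $f_i^{n-j}$ once $i$ is large. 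Now suppose $f^{j}(\om_2(f))=0$ for some $j<n$. Then $f_i^{j}(\om_2(f_i))\to 0$, so eventually $f_i^{j}(\om_2(f_i))\in D_\delta$; but $f_i^{n-j}\bigl(f_i^{j}(\om_2(f_i))\bigr)=f_i^n(\om_2(f_i))=0$, so $f_i^{j}(\om_2(f_i))$ is a zero of $f_i^{n-j}$ in $D_\delta$, hence equals $0$ --- contradicting $f_i\in\Xc_n$. This is where the Brjuno hypothesis is genuinely used; your sketch gestures at the Siegel disk but does not isolate this Rouch\'e step, and the fallback ``choose a different branch of $\zeta_k$'' does not help since the obstruction is at the limit, not in the choice of approximants.
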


Let $\Xc_n$ be the set of all polynomials $f\in\Fc$ such that $f^n(c)=0$ for some critical point $c$ of $f$,
 and $n$ is the smallest non-negative integer with this property.
It is clear that $\Xc_n$ is a complex algebraic curve in $\Fc=\C^2$.
Define a function $\mu$ on $\Xc_n$ as $\mu(f)=f'(0)$.

\begin{lem}
  \label{l:center}
Let $\Uc$ be an IA-capture component.
Consider a slice $\Fc_\la$ with $\la\ne 0$ such that $\Fc_\la\cap\Uc\ne\0$; then clearly $|\la|<1$.
Take any integer $n\ge m$, where $m$ is preperiod of $\Uc$.
There is a polynomial $f_!\in\Fc_\la\cap\Uc$ such that $f_!^n(c_!)=0$ for some critical point $c_!$ of $f_!$,
 and $f_!^k(c_!)\ne 0$ for $k<n$.
\end{lem}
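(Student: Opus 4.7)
The plan is to parameterize $\Fc_\la\cap\Uc$ by the position of the first post-capture image $f^m(\om_2(f))\in A(f)$ under a canonical uniformization of $A(f)$, and then to select this parameter so that the orbit of $\om_2$ passes through $0$ at exactly step $n$.

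For each $f\in\Fc_\la\cap\Uc$, the restriction $f|_{A(f)}$ is a degree-two proper holomorphic self-map of the simply connected domain $A(f)$ with attracting fixed point $0$ of multiplier $\la$ and unique critical point $\om_1(f)\in A(f)$. Any Riemann map $A(f)\to\disk$ sending $0$ to $0$ conjugates $f|_{A(f)}$ to a degree-two Blaschke product fixing $0$ with multiplier $\la$, and all such Blaschke products form a single rotation orbit through $Q_\la$; so we may rotate the Riemann map to make the conjugate map equal $Q_\la$ exactly. A short direct computation (similar to those in Subsection \ref{ss:bp}) shows that the only rotation fixing $0$ and commuting with $Q_\la$ is the identity when $\la\ne 0$, so the normalized Riemann map $\phi_f:A(f)\to\disk$ with $\phi_f\circ f=Q_\la\circ\phi_f$ and $\phi_f(0)=0$ is unique. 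Define
\[
\chi(f)=\phi_f(f^m(\om_2(f)))\in\disk.
\]
By the standard holomorphic motion description of hyperbolic components \cite{MSS,lyu83}, $\phi_f$ depends holomorphically on $f\in\Uc$, so $\chi:\Fc_\la\cap\Uc\to\disk$ is holomorphic.

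I would then show $\chi$ is proper and non-constant, hence surjective onto $\disk$. For properness, let $f_n\in\Fc_\la\cap\Uc$ leave every compact subset; compactness of the connectedness locus in $\Fc_\la$ yields a subsequence $f_n\to f_*\in\bd(\Fc_\la\cap\Uc)$. If $\chi(f_n)$ stayed in a compact subset of $\disk$, then $f_n^m(\om_2(f_n))=\phi_{f_n}^{-1}(\chi(f_n))$ would remain at uniformly bounded hyperbolic distance from $0$ within $A(f_n)$; the holomorphic motion of $A(f)$ over $\Uc$ would then force $f_*^m(\om_2(f_*))$ to lie in the immediate attracting basin of $0$ for $f_*$, making $f_*$ IA-capture of the same combinatorial type as $\Uc$, hence $f_*\in\Uc$, contradicting $f_*\in\bd\Uc$. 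Non-constancy follows from Milnor's cell theorem \cite{mil12}, which yields an injective parameterization of $\Uc$ by $(\la,\chi)\in\disk\times\disk$, so $\chi$ distinguishes distinct polynomials in $\Fc_\la\cap\Uc$.

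Given surjectivity, it remains to produce $w\in\disk$ with $Q_\la^{n-m}(w)=0$ and $Q_\la^k(w)\ne 0$ for $0\le k<n-m$; then $f_!:=\chi^{-1}(w)$ and $c_!:=\om_2(f_!)$ meet the conclusion, since $\phi_{f_!}(f_!^k(c_!))=Q_\la^{k-m}(w)$ for $k\ge m$ gives $f_!^n(c_!)=\phi_{f_!}^{-1}(0)=0$ and $f_!^k(c_!)\ne 0$ for $m\le k<n$, while $f_!^k(c_!)\notin A(f_!)\ni 0$ for $k<m$ by the definition of the preperiod. Such $w$ exists because $Q_\la^{-1}(0)=\{0,\la\}$ with $\la\ne 0$: set $w_0=0$, $w_1=\la$, and inductively pick any $w_{k+1}\in Q_\la^{-1}(w_k)$ (both preimages are nonzero since $Q_\la(0)=0\ne w_k$); take $w:=w_{n-m}$. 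The main obstacle is the properness of $\chi$: one must carefully control the normalized Riemann maps $\phi_f$ as $f$ approaches $\bd\Uc$ inside $\Fc_\la$ and rule out the scenario where the orbit of $\om_2$ remains in a persistent compact piece of the attracting basin in the limit. This is essentially a specialization of Milnor's cell-structure theorem \cite{mil12} to the slice $\Fc_\la$, combined with the stability properties of attracting basins in $\la$-stable families.
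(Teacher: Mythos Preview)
Your approach is sound in outline but takes a different route from the paper, and it defers the hardest step to an appeal to Milnor's cell theorem rather than proving it directly.

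The paper argues by direct quasiconformal surgery. Starting from any $f\in\Fc_\la\cap\Uc$, it lets $V$ be the strictly preperiodic Fatou component containing the critical value $v=f(\om_2(f))$, with $f^{m-1}(V)=A(f)$, and considers homeomorphisms $h:\C\to\C$ equal to the identity outside a compact subset of $V$. Pulling back the standard complex structure through the iterated preimages of $V$ under $h\circ f$ and straightening by the Measurable Riemann Mapping theorem yields a polynomial $f_h\in\Fc$; since $h=\mathrm{id}$ near $0$, the multiplier at $0$ is unchanged, so $f_h\in\Fc_\la$, and by continuity in $h$ one has $f_h\in\Uc$. Because $f^{m-1}|_V:V\to A(f)$ is a conformal isomorphism (no critical point of $f$ lies in $V,f(V),\dots,f^{m-2}(V)$), one can choose $h_!$ so that $f^{m-1}(h_!(v))$ is any prescribed $(n-m)$-th $Q_\la$-preimage of $0$ in $A(f)$, giving $f_!=f_{h_!}$ with $f_!^n(c_!)=0$ and $f_!^k(c_!)\ne 0$ for $k<n$.

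The comparison: your argument is more global and conceptual (surjectivity of a canonical coordinate $\chi$ on the slice), while the paper's is local and constructive (build one deformation that lands where you want). The paper's route is shorter and self-contained precisely because it never needs properness of $\chi$: it only uses that a qc-deformation supported away from $0$ preserves $\la$ and keeps you inside $\Uc$. Your properness sketch, by contrast, is genuinely incomplete---the holomorphic motion of $A(f)$ does not extend across $\bd\Uc$, and one must separately rule out, e.g., that $\om_1(f_*)$ reaches $\bd A(f_*)$ while $\chi(f_n)$ stays bounded. That really does require the cell-structure theorem or an equivalent amount of work. Amusingly, the paper's surgery is itself a direct proof that your $\chi$ is surjective (moving $h(v)$ over all of $V$ moves $\chi(f_h)$ over all of $\disk$), so the two arguments are close relatives; the paper's simply avoids the detour through properness.
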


\begin{proof}
  The proof is a standard qc-deformation argument, cf. \cite{bf14}.
Take any $f\in\Fc_\la\cap\Uc$.
Then there is a critical point $c$ of $f$ with $f^m(c)\in A(f)$.
The point $v=f(c)$ is contained in a strictly preperiodic Fatou component $V$ of $f$ such that $f^{m-1}(V)=A(f)$.
Consider a $C^1$-homeomorphism $h:\C\to\C$ that coincides with the identity outside of some compact subset of $V$.
Taking iterated $h\circ f$-pullbacks of the standard complex structure in iterated pullbacks of $V$,
 we obtain an $h\circ f$-invariant complex structure on $\C$ that coincides with the standard one outside of iterated pullbacks of $V$.
By the Measurable Riemann Mapping theorem, $h\circ f$ is conjugate to a rational function $f_h$ by a qc-conjugacy fixing $\infty$.
Since $\infty$ is a fixed critical point of $f_h$ of multiplicity 2, we conclude that $f_h$ is a polynomial.
We may also arrange that $f_h\in\Fc$ by an affine change of variables.
In a small neighborhood of $0$, we have $h\circ f=f$, and $f$ is conformally conjugate to $f_h$.
Therefore, $f$ and $f_h$ have the same multiplier at $0$, and $f_h\in\Fc_\lambda$.
Note that $f_h$ depends continuously on $h$, and $f_h=f$ for $h=id$.
Thus any connected set of homeomorphisms $h$ gives rise to a connected subset of $\Fc_\la$ lying entirely in $\Uc$.

We now consider a connected set $\Hc$ of homeomorphisms as above
 (i.e., all $h\in\Hc$ equal the identity outside of some compact subset of $V$).
Let $\Dc$ be the corresponding set of maps $f_h$, where $h$ runs through $\Hc$.
Clearly, $\Dc$ is connected.
For $g=f_h\in\Dc$, define $v_g$ as the image of $h(v)$ under the conjugacy between $h\circ f$ and $f_h$.
Then $v_g$ is a critical value of $g$.
We can choose a homeomorphism $h_!$ so that $f^{n-1}(h_!(v))=0$ and that $f^{k-1}(h_!(v))\ne 0$ for $k<n$.
Moreover, we can arrange that $f^{m-1}(h_!(v))$ is any given $f^{n-m}$-preimage of $0$ in $A(f)$.
This chosen homeomorphism $h_!$ can be included into a connected set $\Hc$ of homeomorphisms.
The corresponding polynomial $f_!=f_{h_!}$ has a critical point $c_!$ corresponding to the critical point $c$ of $h_!\circ f$.
Set $v_!=f_!(c_!)$ to be the corresponding critical value; clearly, it corresponds to the critical value $h_!(v)$ of $h_!\circ f$.
We have $f_!^{n}(c_!)=0$ and $f_!^k(c_!)\ne 0$ for $k<n$.
On the other hand, $f_!$ belongs to a connected set $\Dc$ of hyperbolic polynomials; therefore, $f_!\in\Fc_\la\cap\Uc$.
\end{proof}

The component $\Pc^\circ$ has been extensively studied in \cite{PT09}.
In particular, the following is an immediate corollary of the parameterization of $\Pc^\circ$ obtained in \cite{PT09}:

\begin{lem}
  \label{l:center0}
  Let $\la$ be any complex number with $|\la|<1$, and $n$ be any integer that is at least 2.
Then $\Pc^\circ\cap\Fc_\la$ contains a polynomial $f_!$ with the following properties:
 $f_!^n(c_!)=0$ for some critical point $c_!$ of $f_!$, and $f_!^k(c_!)\ne 0$ for $k<n$.
\end{lem}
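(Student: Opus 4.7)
The plan is to read off the conclusion from the Petersen--Tan parameterization of $\Pc^\circ$ established in \cite{PT09}. According to that work, $\Pc^\circ$ is biholomorphic to $\disk \times \disk$, with the first coordinate being the multiplier $\la = f'(0)$ and the second coordinate $\mu$ a ``B\"ottcher-like'' invariant that records the position of the free critical value of $f$ in a canonical linearizer of the immediate basin $A(f)$. In particular, for every $\la \in \disk$, the slice $\Pc^\circ \cap \Fc_\la$ is biholomorphic to $\disk$ via the parameter $\mu$.

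Next I would translate the critical-orbit condition into the parameter $\mu$. Under this parameterization, the restriction $f|_{A(f)}$ is conjugate, via the linearizer $\phi_f: A(f) \to \disk$, to a model cubic Blaschke product $B_{\la, \mu}$ fixing $0$ with multiplier $\la$ and having two critical points in $\disk$ whose positions depend holomorphically on $\mu$. Fixing one of these critical points $c(\mu) \in \disk$ and letting $c_!(\mu) := \phi_f^{-1}(c(\mu))$ denote the corresponding critical point of $f$, the desired condition ``$f_!^n(c_!) = 0$ and $f_!^k(c_!) \ne 0$ for $k < n$'' becomes the one-dimensional algebraic condition ``$B_{\la, \mu}^{n-1}(B_{\la,\mu}(c(\mu))) = 0$ and $B_{\la, \mu}^{k-1}(B_{\la,\mu}(c(\mu))) \ne 0$ for $k < n$'' on the parameter $\mu \in \disk$.

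I would then argue that the set of such $\mu \in \disk$ is non-empty for every $n \ge 2$. The map $\mu \mapsto B_{\la, \mu}^n(c(\mu))$ is a non-constant holomorphic function from $\disk$ to $\disk$, and its exact-order-$n$ zeros are precisely the centers of sub-hyperbolic components of preperiod $n$ in the Blaschke parameter family. Their existence follows from the combinatorial description in \cite{PT09}, which identifies these centers with iterated preimages of $0$ under a model one-parameter Blaschke dynamics. Picking such a $\mu_!$ and setting $f_!$ to be the corresponding polynomial under the parameterization $\Psi_\la: \disk \to \Pc^\circ \cap \Fc_\la$ produces the required map.

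The hard part will be justifying that centers of preperiod exactly $n$ exist in the $\mu$-disk for every $n \ge 2$, which is where the explicit combinatorial content of \cite{PT09} is used. Alternatively, one could bypass \cite{PT09} entirely with a direct qc-surgery argument in the spirit of Lemma \ref{l:center}: starting from any $f \in \Pc^\circ \cap \Fc_\la$, one chooses a compactly supported homeomorphism $h$ of $\C$, equal to the identity near $0$ and outside $A(f)$, that moves the image of one critical value of $f$ to a chosen $(n-1)$-th iterated preimage of $0$ inside $A(f)$ that is not an earlier iterated preimage (such preimages exist because $f|_{A(f)}$ is a degree-$3$ branched covering with attracting fixed point $0$), then pulls back the standard complex structure under iterates of $h \circ f$ and straightens the resulting quasi-regular map by the Measurable Riemann Mapping Theorem. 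Since the surgery is supported in $A(f)$ and preserves the dynamics near $0$, the straightened map remains in $\Pc^\circ \cap \Fc_\la$ and has the required critical orbit relation.
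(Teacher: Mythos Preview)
Your primary route---reading the result off the Petersen--Tan parameterization of $\Pc^\circ$---is exactly what the paper does: it simply records the lemma as ``an immediate corollary of the parameterization of $\Pc^\circ$ obtained in \cite{PT09}'' and offers no further argument. Your elaboration of how the $\mu$-coordinate converts the condition $f^n(c)=0$ into a one-variable equation in a Blaschke model is a reasonable unpacking of that citation.

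Your alternative qc-surgery sketch, however, does not transfer from Lemma~\ref{l:center} as directly as you suggest. In Lemma~\ref{l:center} the homeomorphism $h$ is supported in a \emph{strictly preperiodic} Fatou component $V$; hence every forward $(h\circ f)$-orbit meets the locus where $h\circ f$ is non-holomorphic at most once, and the iterated pullbacks of the standard structure yield a Beltrami differential with dilatation bounded by that of $h$. For $f\in\Pc^\circ$ there is no strictly preperiodic bounded Fatou component: both critical points and both critical values already lie in the \emph{invariant} basin $A(f)$. If $h$ is supported on a compact set $K\subset A(f)$ away from a small $f$-forward-invariant neighbourhood of $0$, forward $(h\circ f)$-orbits can revisit $K$ an unbounded number of times---indeed there is no a priori reason they converge to $0$ at all---so the naively iterated pullback structure need not have $L^\infty$ norm below $1$, and the Measurable Riemann Mapping theorem does not apply. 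Repairing this requires either confining the support of $h-\mathrm{id}$ to a fundamental annulus for the linearized dynamics (which will not in general contain both the critical value and the target $(n-1)$-th preimage of $0$), or replacing the naive surgery by a genuine Blaschke-model argument: conjugate $f|_{A(f)}$ to a degree-$3$ Blaschke product, substitute another with the prescribed critical orbit and the same multiplier, and glue back via a qc conjugacy on $\ol\disk$. Either way the work is substantially heavier than ``in the spirit of Lemma~\ref{l:center}'', which is presumably why the paper simply defers to \cite{PT09} at this point.
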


Thus, both in the case $\Uc=\Pc^\circ$ and in the case where $\Uc$ is an IA-capture component, we found a certain map $f_!\in\Uc$.

\begin{proof}[Proof of Lemma \ref{l:ag}]
Recall that the function $\mu:\Xc_n\to \C$ was defined by the formula $\mu(f)=f'(0)$.
We claim that $\mu(\Xc_n\cap\Uc)$ coincides with $\disk$, possibly with finitely many punctures.
In the case $\Uc=\Pc^\circ$, this follows from Lemma \ref{l:center0}.
Thus it suffices to assume that $\Uc$ is an IA-capture component.
The inclusion $\mu(\Xc_n\cap\Uc)\subset\disk$ is obvious.
It now suffices to show that $\mu(\Xc_n\cap\Uc)$ is open and closed in $\disk$.
It is open by the Open Mapping Theorem and since $\mu$ is holomorphic.
Suppose now that $\la$ belongs to the boundary of $\mu(\Xc_n\cap\Uc)$ in $\disk$ but not to $\mu(\Xc_n\cap\Uc)$.
Then there is a polynomial $f\in\Fc_\la\cap\ol{\Xc_n\cap\Uc}$.
In other words, there is a sequence $f_i\in\Xc_n\cap\Uc$ with $f_i\to f\in\Fc_\la$ as $i\to\infty$.
For every $i$, there is a critical point $c_i$ of $f_i$ with $f_i^n(c_i)=0$.
Passing to a subsequence, we may assume that $c_i\to c$ as $i\to\infty$, where $c$ is a critical point of $f$, and $f^n(c)=0$.
On the other hand, $|\la|<1$, hence $f$ is hyperbolic.
A hyperbolic polynomial belongs to the closure of a hyperbolic component $\Uc$ only if it belongs to $\Uc$.
Therefore, $f\in\Uc$, but then by definition we have $f\in\Xc_n\cap\Uc$ unless $f$ is a puncture of $\Xc_n$
(which means that $f^k(c)=0$ for some $k<n$).
The latter case is ruled out for the following reason.
By \cite{BuPe}, there is $\delta>0$ such that the basin $A(f_i)$ for all large $i$ contains a $\delta$-disk around $0$.
This implies that $f^k(c)\ne 0$ for $k<n$.
It follows that $\mu(f)$ as $f$ runs through $\ol\Xc_n$ takes all values in $\uc$,
in particular, all values of the form $e^{2\pi i \theta}$, where $\theta$ is Brjuno.

Choose a point $f\in\ol\Xc_n\cap\Uc$ with $\mu(f)=e^{2\pi i\theta}$, where $\theta$ is Brjuno.
It is clear that $f$ is on the boundary of $\Uc$.
We will now prove that $f$ is IS-capture.
Indeed, $f'(0)=\lambda=e^{2\pi i\theta}$ and $\theta$ is Brjuno, hence $f$ has a Siegel disk $\Delta$ around $0$
(we distinguish between the function $\mu$ and its particular value $\lambda$).
On the other hand, since $f\in\ol\Xc_n$, there is a critical point $c$ of $f$ such that $f^{n}(c)=0$.
We have in fact $f\in\Xc_n$ (and $f^k(c)\ne 0$ for $k<n$) for the same reason as above.
By definition, this means that $f$ is an IS-capture polynomial.
\end{proof}

The following statement is proved as Theorem 5.3 in \cite{Z} for a different parameterization of basically the same slices.
The only difference with \cite{Z} is that Zakeri considers critically marked cubic polynomials.

\begin{lem}
  \label{l:no-isol}
  Suppose that $f\in\Fc_\la$, where $|\la|=1$, and $f$ has a Siegel disk $\Delta$ around $0$.
  If $f^n(c)\in\Delta$ for some critical point $c$ of $f$, then there is an IS-capture component in $\Fc_\la$ containing $f$.
\end{lem}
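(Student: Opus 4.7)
The plan is to verify two facts about small perturbations $g\in\Fc_\la$ of $f$: (a) every such $g$ is itself an IS-capture, and (b) $f$ is $\la$-stable. Together these show that the $\la$-stability component of $f$ in $\Fc_\la$ consists entirely of IS-captures, which by definition is what it means to be an IS-capture component.

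For (a) I would fix a Riemann map $\phi:\Delta\to\disk$ with $\phi(0)=0$ and $\phi\circ f=\la\cdot\phi$ on $\Delta$, and pick $r$ with $|\phi(f^n(c))|<r<1$. The closed set $D=\phi^{-1}(\overline{\disk_r})$ is then a compact $f$-invariant Jordan disk in $\Delta$ on which $f$ is conjugate to the rigid rotation by $\la$, and $f^n(c)$ lies in the interior of $D$. The key observation is that, because the multiplier $\la$ at $0$ is frozen along $\Fc_\la$, the Siegel linearization of $g$ at $0$ extends from its initial disk of convergence to a disk of fixed size via the functional equation $\phi_g\circ g=\la\cdot\phi_g$; equivalently, one transports $\partial D$ holomorphically by the $\la$-lemma, starting from the dynamically defined motion on the orbit of a single point of $\partial D$. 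This produces a holomorphically moving $g$-invariant Jordan disk $D(g)\subset\Delta(g)$ with $D(f)=D$, on which $g$ is conjugate to rotation by $\la$. Since the critical points of $g$ depend holomorphically on its coefficients, there is $c(g)$ with $c(f)=c$, and $g^n(c(g))\to f^n(c)$ as $g\to f$. Because $f^n(c)$ lies in the interior of $D$, we conclude $g^n(c(g))\in D(g)\subset\Delta(g)$ for all $g$ in a $\Fc_\la$-neighborhood of $f$, so every such $g$ is an IS-capture.

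For (b) I would argue that no bifurcations occur in this neighborhood. Every nearby $g$ is an IS-capture, hence has no attracting or parabolic cycles; all its periodic orbits other than $0$ are repelling and move holomorphically by the implicit function theorem. The captured critical point $c(g)$ stays inside the Fatou set, and although the recurrent critical point $\re(g)$ accumulates on $\partial\Delta(g)$ by \cite{man93}, it never lands on a repelling periodic point. Standard Ma\~n\'e--Sad--Sullivan machinery then yields an equivariant holomorphic motion of $J(f)$ over the neighborhood, showing that $f$ is $\la$-stable. This is essentially \cite[Theorem 5.3]{Z}, proved there for critically marked cubic polynomials; the remaining work is to transcribe Zakeri's argument into the coordinates of $\Fc_\la$. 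I expect this transcription, rather than (a), to be the main obstacle, since some care is required to ensure that the holomorphic motion can be taken equivariantly throughout the slice.
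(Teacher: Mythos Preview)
Your plan is structurally sound and part (b) correctly defers to Zakeri; indeed, \cite[Theorem~5.3]{Z} shows precisely that IS-captures are $\la$-stable. The paper takes a different route: it runs a qc-deformation supported in the strictly preperiodic Fatou component $V\ni f(c)$, producing a connected family $\Dc=\{f_h\}$ of polynomials in $\Fc_\la$ all qc-conjugate to $f$, hence all IS-captures; then a conformal invariant (the linearized position of the critical value in the Siegel disk) gives a map from $\Dc$ onto an open subset of $\disk$, which forces $\Dc$ to contain an open subset of the one-dimensional slice $\Fc_\la$.

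There is, however, a genuine gap in your argument for (a). Your key step is that a fixed compact subdisk $D\subset\Delta(f)$ persists inside $\Delta(g)$ for all $g\in\Fc_\la$ near $f$ --- equivalently, that the conformal radius of the Siegel disk is lower semicontinuous along $\Fc_\la$. Neither of your proposed justifications establishes this. The functional equation $\phi_g\circ g=\la\cdot\phi_g$ does \emph{not} enlarge the domain of the linearizer when $|\la|=1$: since $g$ is a bijection of each invariant subdisk onto itself, iterating the relation in either direction reproduces the same domain rather than extending it. And the ``$\la$-lemma on the orbit of a point $z_0\in\partial D$'' is circular: to define the candidate motion $f^n(z_0)\mapsto g^n(z_0)$ (let alone check injectivity) one already needs the forward orbit $\{g^n(z_0)\}$ to remain bounded, which is exactly the assertion $z_0\in K(g)$ you are trying to prove.

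The lower semicontinuity you need is true but requires real work; it is part of what Zakeri establishes in \cite[Theorem~5.3]{Z}, so in effect you are deferring all of (a), not just (b), to that reference. The virtue of the paper's qc-deformation argument is that it sidesteps this issue entirely: since each $f_h\in\Dc$ comes equipped with an explicit qc-conjugacy to $f$, the Siegel disk $\Delta(f_h)$ is simply the image of $\Delta(f)$ under that conjugacy, and no separate semicontinuity statement is needed to see that $f_h$ is an IS-capture.
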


\begin{proof}
The proof is based on the same qc-deformation argument as the proof of Lemma \ref{l:center}.
We will use the notation introduced in Lemma \ref{l:center}, in particular, $v$, $\Hc$, $\Dc$ and $f_h$.
Then $\Dc=\{f_h\,|h\in\Hc\}$ a connected subset of $\Fc_\la$ consisting of IS-capture polynomials.
Recall that $v=f(c)$ is a critical value of $f$.
We choose the set $\Hc$ of homeomorphisms so that $D=\{h(v)\,|\, h\in\Hc\}$ is open.

For every $g\in\Dc$, we let $\Delta_g$ be the Siegel disk of $g$ around $0$.
We let $V_g$ denote the Fatou component of $g$ containing a critical value and such that $g^{n-1}(V_g)=\Delta_g$.
These properties define $V_g$ in a unique way.
We will also write $v_g$ for the critical value of $g$ contained in $V_g$.
Note that, if $g=f_h$, then $v_g$ is the image of $h(v)$ under the conjugacy between $h\circ f$ and $f_h$.
Consider the Riemann map $\phi_g:\Delta_g\to\disk$ such that $\phi(0)=0$ and $\phi'(0)\in\R_{>0}$.
The map $g\mapsto \phi_g(g^{n-1}(v_g))$ takes $\Dc$ to the open set $\phi_f(f^{n-1}(D))$.
Indeed, the image of $f_h$ under this map is $\phi_f(f^{n-1}(h(v)))$.
Thus, an analytic map takes $\Dc$ to some open set.
It follows that $\Dc$ contains an open subset of $\Fc_\la$.
Since $\Dc$ consists of IS-capture polynomials, it is contained in some IS-capture component.
\end{proof}

Finally, we can prove the main theorem of this section.

\begin{thm}
  \label{t:IS-BdP}
Let $\Uc$ be a hyperbolic component of $\Fc$ that is either $\Pc^\circ$ or an IA-capture component.
In the latter case, set $m$ to be the preperiod of $\Uc$; in the former case set $m=2$.
For every Bjuno $\theta\in\R/\Z$ and every $n\ge m$,
  there exists an IS-capture component $\Dc$ in $\bd(\Uc)\cap\Fc_\la$ with $\la=e^{2\pi i\ta}$ such that,
  for all $g\in\Dc$, we have $g^n(c_g)\in\Delta_g$ for some critical point $c_g$ of $g$.
Here $\Delta_g$ is the Siegel disk of $g$ around $0$.
\end{thm}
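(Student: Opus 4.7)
The plan is to combine Lemma \ref{l:ag} with Lemma \ref{l:no-isol} to construct an IS-capture component through a specific boundary point of $\Uc$ in $\Fc_\la$, and then to use Theorem A together with the $\la$-stability of the component to show that the whole component lies on $\bd(\Uc)$.

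First I would apply Lemma \ref{l:ag} to the given $\Uc$, $\ta$, and $n\ge m$ to obtain a polynomial $f\in\bd(\Uc)\cap\Fc_\la$ together with a critical point $c$ of $f$ such that $f^n(c)=0$ and $f^k(c)\ne 0$ for $0<k<n$. Since $\ta$ is Brjuno and $f'(0)=\la=e^{2\pi i\ta}$, the polynomial $f$ carries a Siegel disk $\Delta_f$ around $0$, so $f^n(c)=0\in\Delta_f$ certifies that $f$ is an IS-capture. Lemma \ref{l:no-isol} then supplies an IS-capture component $\Dc\subset\Fc_\la$ containing $f$. That each $g\in\Dc$ has a critical point $c_g$ with $g^n(c_g)\in\Delta_g$ follows from $\la$-stability of $\Dc$: the critical points and their forward orbits move holomorphically, the Fatou components (including the Siegel disks $\Delta_g$) are transported by the associated holomorphic motion, and the property that ``$g^n(c_g)$ lies in the Siegel component around $0$'' is both open and closed on $\Dc$, hence constant.

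The main obstacle is to upgrade the single inclusion $f\in\bd(\Uc)$ to $\Dc\subset\bd(\Uc)$. To this end, I would observe that the rational lamination $\lam^r_g$ is constant along $\Dc$: pairs of rational arguments whose external rays coland at a repelling periodic point of $f$ continue to coland at a repelling periodic point of every $g\in\Dc$ by Lemma \ref{l:rep} and the standard holomorphic motion of repelling periodic orbits which characterizes $\la$-stability. For each $g\in\Dc$, Theorem A yields a unique bounded hyperbolic component $\Uc_g$ in $\Fc_{at}$ with $g\in\bd(\Uc_g)$, and provides the identity $\lam_P=\ol{\lam^r_P}$ for every $P\in\Uc_g$. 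Hence the geodesic lamination of every polynomial in $\Uc_g$ depends only on $\lam^r_g=\lam^r_f$, and Corollary \ref{c:samecomp} forces $\Uc_g=\Uc_f$ for all $g\in\Dc$. Since $f\in\bd(\Uc)$, the uniqueness clause of Theorem A gives $\Uc_f=\Uc$, so $\Dc\subset\bd(\Uc)$ as required.
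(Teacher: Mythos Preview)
Your proof follows the same route as the paper's: Lemma~\ref{l:ag} produces $f\in\bd(\Uc)\cap\Fc_\la$ with $f^n(c)=0$, Lemma~\ref{l:no-isol} thickens $f$ into an IS-capture component $\Dc\subset\Fc_\la$, and the uniqueness clause of Theorem~A then pins $\Dc$ to $\bd(\Uc)$. The paper is terser at the last step---it simply asserts ``by Theorem~A, the component $\Dc$ belongs to the boundary of a unique hyperbolic component $\Vc$'' and uses $f\in\bd(\Uc)$ to force $\Vc=\Uc$---whereas you unpack this by fixing the rational lamination along $\Dc$ and invoking Corollary~\ref{c:samecomp}; that is exactly the mechanism hidden behind the paper's one-line appeal.

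One imprecision is worth flagging. When you write that Theorem~A ``provides the identity $\lam_P=\ol{\lam^r_P}$ for every $P\in\Uc_g$'' and then deduce that $\lam_P$ depends only on $\lam^r_g$, you are tacitly using $\lam^r_P=\lam^r_g$ for $P\in\Uc_g$. That equality is not part of the \emph{statement} of Theorem~A; it is established inside the proof of Theorem~\ref{t:1hypcomb} (the two inclusions $\lam^r_f\subset\lam^r_\Uc$ and $\lam^r_\Uc\subset\lam^r_f$ obtained via Lemma~\ref{l:rep} and Corollary~\ref{c:converge}). Once you cite that directly, your argument is complete and coincides with the paper's.
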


\begin{proof}
By Lemma \ref{l:ag}, for any Brjuno $\ta\in\R/\Z$ and any $n\ge m$,
 there is a cubic polynomial $f$ with the following properties:
 \begin{enumerate}
   \item we have $f\in\Fc_\la$, where $\la=e^{2\pi i\ta}$;
   \item there is a critical point $c$ of $f$ with $f^n(c)=0$;
   \item we have $f^k(c)\ne 0$ for $k<n$.
 \end{enumerate}
By Lemma \ref{l:no-isol}, there is an IS-capture component $\Dc$ in $\Fc_\la$ containing $f$.
By Theorem A, the component $\Dc$ belongs to the boundary of a unique hyperbolic component $\Vc$ of $\Fc$.
Moreover, by Theorem \ref{t:1hypcomb}, the polynomial $f$ lies on the boundary of a unique hyperbolic component.
But $f$ is in the boundary of $\Uc$.
It follows that $\Vc=\Uc$, hence $\Dc$ is contained in the boundary of $\Uc$.
\end{proof}

Theorem \ref{t:IS-BdP} establishes the existence of many analytic disks on the boundary of the cubic connectedness locus.
Observe that Lemma \ref{l:no-isol} and Theorem \ref{t:IS-BdP} imply Theorem B.

We conclude this section with a remark which relates our results concerning IA-capture components and laminations.
A cubic invariant lamination $\lam$ is said to be an \emph{IA-capture lamination} if the following assumptions hold:
\begin{enumerate}
  \item there is an invariant Fatou gap $A$ such that $\si_3|_{A\cap\uc}$ is two-to-one;
  \item there is a Fatou gap $V\ne A$ such that $\si_3|_{V\cap\uc}$ is two-to-one;
  \item we have $\si_3^{m_\lam}(V)=A$, where $m_\lam=m$ is the minimal integer with this property.
\end{enumerate}
The number $m$ is called the \emph{preperiod of $\lam$}.
It is well-known (and easy to see) that any IA-lamination is the closure of its restriction upon all the rational angles
(i.e., the closure of the corresponding rational lamination).

It follows from the appendix to \cite{mil12} written by Poirier that, for each IA-capture lamination $\lam$,
 there exists a unique IA-capture component $\Uc_\lam\subset \Fc$ with the following property.
No matter which $f\in \Uc_\lam$ we take, the lamination generated by $f$ coincides with $\lam$.
The result of \cite{mil12} is stated in the language of Hubbard trees and so-called reduced mapping schemes,
 however, a straightforward translation of this result into the language of laminations yields the claim stated above.
Similarly, if $\lam$ is the empty lamination, then we set $\Uc_\lam=\Pc^\circ$.
Evidently, Theorem \ref{t:IS-BdP} can be restated to emphasize the role of IA-capture laminations, e.g., as follows.

\medskip

\noindent{\textbf{Theorem \ref{t:IS-BdP}$'$.}}
\emph{Let $\lam$ be the empty lamination or an IA-capture lamination.
In the latter case, set $m$ to be the preperiod of $\lam$; in the former case set $m=2$.
For every Brjuno $\theta\in\R/\Z$ and every $n\ge m$, the hyperbolic component $\Uc_\lam$ contains an IS-capture component $\Dc$ in $\bd(\Uc_\lam)\cap\Fc_\la$ with $\la=e^{2\pi i\ta}$ such that, for all
$g\in\Dc$, we have $g^{\circ n}(c_g)\in\Delta_g$ for some critical
point $c_g$ of $g$, and $n$ is the least such integer. Here $\Delta_g$
is the Siegel disk of $g$ around $0$.}

\section{The main cubioid of $\Fc$}
\label{s:cu}
In this section, we prove Theorem C and
obtain corollaries related to the problem of distinguishing between Siegel and Cremer fixed points.
Recall that the Main Cubioid $\mathcal{CU}$ was introduced in Definition~\ref{d:cubio}.

\begin{proof}[Proof of Theorem C]
Suppose that $f\in\Fc_\la$ with $|\la|=1$ is a cubic IS-capture
polynomial. By way of contradiction, assume that
$f\in\mathcal{CU}\sm\Pc$. By Theorem \ref{t:1hypcomb}, all polynomials
$f_\eps$ for small $\eps>0$ belong to some IA-capture component $\Uc$
(since $f\notin \Pc$, we have $f_\eps\notin \Pc^\mathrm{o}$ for small
$\eps$). On the other hand, then, by Theorem A, the map $f$ contains a
repelling periodic cutpoint in its Julia set, a contradiction with
$f\in \mathcal{CU}$. The rest of Theorem C now follows from
Theorem~\ref{t:sie-quee}.
\end{proof}

For the sake of completeness we also prove the next lemma.

\begin{lem}
\label{l:nohyp}
  The only hyperbolic component of $\Fc$ intersecting $\mathcal{CU}$ is $\Pc^\circ$.
\end{lem}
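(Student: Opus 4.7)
The plan is to show that any hyperbolic component $\Uc \subset \Fc$ meeting $\mathcal{CU}$ must lie in $\Fc_{at}$, and then use Theorem~\ref{t:major} to conclude that $\Uc=\Pc^\circ$. The main obstacle is really just bookkeeping: the two obstructions coming from Theorem~\ref{t:major} (a critical point on the boundary of $A(f)$, or a periodic cutpoint) must both be matched against the cubioid conditions and hyperbolicity.

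First I would pick $f\in\Uc\cap\mathcal{CU}$ and show $f\in\Fc_{at}$. By definition of $\mathcal{CU}$ we have $|f'(0)|\le 1$. Since $f$ is hyperbolic, every periodic point of $f$ is either attracting or repelling; in particular $0$ cannot be indifferent, so $|f'(0)|<1$. Therefore $f\in\Fc_{at}$, and since $\Fc_{at}$ is open, the whole component $\Uc$ lies in $\Fc_{at}$.

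Next I would assume, aiming at a contradiction, that $\Uc\ne\Pc^\circ$, so that $f\in\Fc_{at}\setminus\Pc^\circ$. Being hyperbolic, $f$ has locally connected Julia set, so Theorem~\ref{t:major} applies: $\lam_f$ has a quadratic $\si_3$-invariant gap $G$ whose major $M(G)$ is either critical or periodic. In the critical case the corresponding critical point of $f$ lies in $\bd(A(f))\subset J(f)$, contradicting hyperbolicity (no critical points lie in $J(f)$). In the periodic case $J(f)$ contains a periodic cutpoint associated to $M(G)$ which is either repelling or parabolic; hyperbolicity rules out the parabolic alternative, leaving a repelling periodic cutpoint of $J(f)$, which contradicts the defining property of $\mathcal{CU}$ that there are no repelling periodic cutpoints. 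Either way we reach a contradiction, so $\Uc=\Pc^\circ$, as required. (Note that $\Pc^\circ\subset\mathcal{CU}$ via $\Pc^\circ\subset\Pc\subset\mathcal{CU}$, so the statement is nonvacuous.)
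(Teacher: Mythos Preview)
Your argument is correct and essentially the same as the paper's: establish $f\in\Fc_{at}\setminus\Pc^\circ$, then apply Theorem~\ref{t:major} and match each alternative against hyperbolicity or the cubioid conditions. One small point: the inclusion $\Uc\subset\Fc_{at}$ does not follow merely from $\Fc_{at}$ being open (you need connectedness of $\Uc$ together with the fact that no map with $|\la|=1$ is hyperbolic), but this inclusion is in any case unnecessary for the rest of the argument.
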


\begin{proof}
Assume, to the contrary, that there exists a hyperbolic component $\Vc\ne \Pc^\circ$ intersecting $\mathcal{CU}$.
Set $\Vc_\la=\Vc\cap\Fc_\la$ and $\mathcal{CU}_\la=\mathcal{CU}\cap\Fc_\la$.
Choose $\la$ with $\Vc_\la\cap\mathcal{CU}_\la\ne\0$.
We must have $|\la|\le 1$ since otherwise $\mathcal{CU}_\la=\0$.
From $\Vc_\la\ne\0$, it follows that $\Vc\cap\Fc_{at}\ne\0$.
But then $\Vc\subset\Fc_{at}$ and $|\la|<1$.
Note also that, since polynomials in $\mathcal{CU}$ have connected Julia sets,
  all polynomials in $\Vc$ have connected Julia sets, i.e., the component $\Vc$ is bounded.

Take $g\in\Vc_\la\cap\mathcal{CU}_\la$.
Then $J(g)$ is locally connected; let $\lam$ be the corresponding geodesic lamination.
There is a gap $G$ of $\lam$ corresponding to $A(g)$.
By Theorem \ref{t:major}, the major $M$ of $G$ is either critical or periodic.
The former implies that a critical point of $g$ belongs to $\bd(A(g))$, a contradiction.
Therefore, $M=\ol{\al\be}$ is periodic.
The rays $R_g(\al)$, $R_g(\be)$ land at the same periodic point $x$ of $g$.
Since $g$ is hyperbolic, $x$ must be repelling.
Thus $g$ has a repelling periodic cutpoint of $J(g)$, a contradiction with $g\in\mathcal{CU}$.
\end{proof}

A question as to whether a fixed irrationally indifferent point of a
polynomial is Cremer or Siegel depending on the multiplier at this
point is addressed in a conjecture by A. Douady. Let us now state a
related to it corollary based upon results of Perez-Marco.


\begin{dfn}[Brjuno numbers]\label{d:bn}
The set $\mathcal{B}$ is the set of irrational numbers $\theta$ such
that $\sum \frac{\ln{q_{n+1}}}{q_n} < \infty$, where
$\frac{p_n}{q_n}\to \theta$ is the sequence of approximations given by
the continued fraction expansion of $\theta$. Numbers from
$\mathcal{B}$ are called \emph{Brjuno} numbers.
\end{dfn}

The following is a classical result by A. D. Brjuno \cite{brj71}.

\begin{thm}[\cite{brj71}]\label{t:br}
If $a$ is an irrationally indifferent fixed point of a polynomial $f$ with multiplier $e^{2\pi i \theta}$ and $\theta\in \mathcal{B}$, then
the point $a$ is a Siegel fixed point.
\end{thm}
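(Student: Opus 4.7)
The plan is to solve the formal linearization equation and control the resulting small divisors by the arithmetic condition on $\ta$. After translating $a$ to $0$, write
$$
f(z) = \la z + \sum_{n\ge 2} f_n z^n, \qquad \la = e^{2\pi i\ta}.
$$
I seek a holomorphic germ $\phi(z) = z + \sum_{n\ge 2} \phi_n z^n$ with $f\circ \phi(z) = \phi(\la z)$. Comparing coefficients yields the recursion
$$
(\la^n - \la)\,\phi_n \;=\; Q_n(\phi_2,\dots,\phi_{n-1},f_2,\dots,f_n),
$$
where $Q_n$ is a universal polynomial with non-negative integer coefficients depending only on $n$. Since $|\la|=1$ and (as $\ta$ is irrational) $\la$ is not a root of unity, the divisors $\la^n - \la$ are nonzero, so a unique formal solution $\phi$ exists. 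The entire difficulty is convergence.

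To bound $|\phi_n|$ I would use the method of majorants. Fix $r>0$ so that $f$ is analytic on $|z|\le r$, let $M = \max_{|z|\le r}|f(z) - \la z|$, and compare $\phi$ to the formal linearizer of the explicit map $\la z + F(z)$ with $F(z) = M\sum_{n\ge 2}(z/r)^n$. Since $Q_n$ has positive coefficients, an induction gives
$$
|\phi_n| \;\le\; C^{\,n}\,\prod_{k=2}^{n}\frac{1}{|\la^k - \la|}
$$
up to combinatorial factors absorbed into $C$. The problem thus reduces to the arithmetic estimate
$$
\limsup_{n\to\infty}\frac{1}{n}\sum_{k=2}^{n}\log\frac{1}{|\la^k-\la|} \;<\; \infty,
$$
from which the series for $\phi$ has positive radius of convergence, producing the desired local analytic conjugacy between $f$ near $0$ and $z\mapsto \la z$, so that $0$ is a Siegel fixed point.

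The hard part is this small-divisor estimate. The plan is to group the indices $k\in\{2,\dots,n\}$ according to the continued-fraction convergents $p_m/q_m\to\ta$: a small value of $|\la^k-1|$ (equivalently $|\la^k-\la|$) occurs essentially only when $k$ is close to a multiple of some denominator $q_m$, and then $|\la^k-1|\gtrsim 1/q_{m+1}$. Summing the logarithms over each block $q_m\le k<q_{m+1}$ and using that at most $q_{m+1}/q_m$ blocks fall before index $n$, one shows
$$
\sum_{k=2}^{n}\log\frac{1}{|\la^k-\la|} \;\le\; C\,n\sum_{m\,:\,q_m\le n}\frac{\log q_{m+1}}{q_m} \;+\; O(n),
$$
which is $O(n)$ precisely under the Brjuno condition $\sum_m \log q_{m+1}/q_m < \infty$ of Definition~\ref{d:bn}. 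This is the main technical obstacle; its careful arithmetic combinatorics, originally due to Brjuno and later streamlined by Yoccoz, is what justifies the appellation \emph{Brjuno condition} and closes the argument.
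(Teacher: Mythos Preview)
The paper does not prove this theorem; it is quoted as a classical result of Brjuno \cite{brj71} and used as a black box. So there is no ``paper's proof'' to compare with. On the merits of your sketch, the overall plan (formal linearization plus a small-divisor convergence estimate) is the standard one, but there is a real gap in the majorant step.

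The problematic claim is that ``an induction gives $|\phi_n|\le C^n\prod_{k=2}^{n}|\la^k-\la|^{-1}$ up to combinatorial factors absorbed into $C$''. The induction does \emph{not} give this. Unfolding the recursion yields a sum over plane trees with $n$ leaves, each tree $T$ contributing $\prod_{v}|\la^{n(v)}-\la|^{-1}$, the product running over internal vertices $v$ with subtree leaf-counts $n(v)$. These counts need not be distinct: a balanced tree, for instance, has $n/m$ internal vertices of each size $m=2,4,8,\dots$, so a single bad divisor $|\la^{m}-\la|^{-1}$ can occur $n/m$ times in one tree, whereas in your product $\prod_{k=2}^{n}$ it occurs once. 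If you try to close the induction directly, you need $P_{k_1}\cdots P_{k_j}\lesssim P_{n}$ (with $P_k=\prod_{i=2}^{k}|\la^i-\la|^{-1}$), and this fails because the left side repeats all the low-index divisors. Controlling the tree sum is exactly the nontrivial content of Brjuno's proof: it requires his counting lemma (bounding, for each scale $q_m$, how many vertices in a tree can carry a divisor of order $1/q_{m+1}$, using that such vertices have pairwise disjoint subtrees), or equivalently Davie's refinement or Yoccoz's renormalization argument. It is not a matter of ``combinatorial factors absorbed into $C$''. Your final arithmetic estimate, that $\sum_{k\le n}\log|\la^k-\la|^{-1}=O(n)$ under the Brjuno condition, is correct and is indeed equivalent to $\ta\in\mathcal B$; but it bounds the wrong quantity and does not by itself control the majorant $\sigma_n$.
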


Another classical result, due to J.-C. Yoccoz, states that 
in the quadratic case Theorem~\ref{t:br} is sharp.

\begin{thm}[\cite{yoc95}]\label{t:yoc} In the situation of Theorem~\ref{t:br},
if $f$ is quadratic and $\theta\notin \mathcal{B}$ is not a Brjuno
number, then $a$ is a Cremer fixed point of $f$.
\end{thm}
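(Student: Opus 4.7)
My plan is to follow Yoccoz's strategy \cite{yoc95}, proving the sharp lower bound on the ``loss of linearizability'' by a renormalization of indifferent holomorphic germs. After an affine change of coordinates, we may assume $f = P_\la$ with $P_\la(z) = \la z + z^2$, $\la = e^{2\pi i\ta}$, and $a = 0$. Let $r(\ta) \in [0, 1]$ denote the conformal radius at $0$ of the maximal neighborhood on which $P_\la$ is linearizable (with $r(\ta) = 0$ when none exists), and set $u(\ta) = -\log r(\ta) \in [0, +\infty]$. Write $B(\ta) = \sum_{n \ge 0} \log(q_{n+1})/q_n$ for the Brjuno sum built from the convergents $p_n/q_n$ of $\ta$. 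Brjuno's direction (Theorem~\ref{t:br}) translates into $u(\ta) \le B(\ta) + C$; the target is the matching inequality $u(\ta) \ge B(\ta) - C'$. Once established, this gives the theorem at once: if $\ta \notin \mathcal{B}$, then $B(\ta) = +\infty$, so $u(\ta) = +\infty$, hence $r(\ta) = 0$ and $0$ is a Cremer fixed point of $P_\la$.

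For the lower bound I would embed $P_\la$ in the class $\Sc_\ta$ of univalent germs $g \colon U \to \C$ fixing $0$ with multiplier $e^{2\pi i\ta}$ and a single (quadratic) critical value in a fixed topological disk $U \supset \ol{\disk}$. Yoccoz's renormalization $\mathcal{R}\colon \Sc_\ta \to \Sc_{G(\ta)}$, where $G$ is the Gauss map, is built as follows: choose a fundamental crescent $D \subset U$ for the infinitesimal rotation $z \mapsto e^{2\pi i \ta}z$; take the first-return map of $g$ to $D$; glue the two boundary arcs of $D$ along this dynamics to obtain a Riemann surface conformally isomorphic to a disk; the resulting uniformized germ is $\mathcal{R}g \in \Sc_{G(\ta)}$. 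A Koebe-type comparison of the conformal radii of the return domain and of $U$ produces the functional equation
\[
u(\ta) = \al_1\, u(G(\ta)) + \log(1/\al_1) + O(1),
\]
where $\al_1 \asymp \ta$ is essentially the fractional part of $\ta$ and $\log(1/\al_1) \asymp \log a_1$, with $a_1$ the first partial quotient of $\ta$.

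Iterating this identity $N$ times along the Gauss orbit $\ta, G(\ta), G^2(\ta), \dots$ and invoking the standard continued-fraction identity $\al_1 \al_2 \cdots \al_n \asymp 1/q_{n+1}$ telescopes the recursion into
\[
u(\ta) \ge \sum_{k=0}^{N-1} \frac{\log q_{k+1}}{q_k} - C',
\]
and letting $N \to \infty$ gives $u(\ta) \ge B(\ta) - C'$, as required. The hard part, and the bulk of Yoccoz's technical work, is constructing the renormalization operator so that it actually realizes the sharp additive loss $\log(1/\al_1) + O(1)$ at each step and genuinely lands back in the class $\Sc_{G(\ta)}$; this requires a delicate choice of fundamental crescent, the use of Fatou-type coordinates for the irrational rotation, and a polynomial-like surgery to package the first-return map back into Yoccoz's class. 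An alternative route would be P\'erez-Marco's \emph{small cycles} method: the argument principle produces, for each $n$, precisely $q_n + 1$ solutions of $f^{q_n}(z) = z$ near $0$, and when $\ta \notin \mathcal{B}$ one shows that the resulting cycles accumulate at $0$ at a rate incompatible with the existence of a Siegel disk. The analytic techniques differ, but the arithmetic content that drives the obstruction is the same Brjuno sum.
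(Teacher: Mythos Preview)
The paper does not prove Theorem~\ref{t:yoc}; it is simply quoted as a classical result of Yoccoz, with a citation to \cite{yoc95} and no accompanying argument. So there is nothing in the paper to compare your proposal against.

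That said, your outline is a faithful high-level sketch of Yoccoz's actual strategy in \cite{yoc95}: reduce to $P_\la(z)=\la z+z^2$, encode linearizability through $u(\ta)=-\log r(\ta)$, and establish the lower bound $u(\ta)\ge B(\ta)-C'$ via a geometric renormalization sending the rotation number $\ta$ to $G(\ta)$, so that divergence of the Brjuno sum forces $r(\ta)=0$. A few technical points are imprecise as written (the description of the class $\Sc_\ta$, and the exact shape of the functional inequality), and what you have is a roadmap rather than a proof, since the construction of the renormalization operator and the control of the $O(1)$ error are precisely the substance of Yoccoz's work. But for the purposes of this paper, citing \cite{yoc95} is all that is expected.
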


A conjecture by A. Douady states that 
Theorem~\ref{t:yoc} holds
for higher degree polynomials too. Below we verify this for cubic
polynomials $f_{\lambda, b}=\lambda z + bz^2+z^3$ except for
polynomials that belong to the set $\Pc_\lambda$. An important
ingredient of our arguments is a result of R. Perez-Marco \cite{per01};
again for brevity we state only a relevant corollary of Perez-Marco's
theorem reduced to our spaces of polynomials (the actual results of
\cite{per01} are much stronger and much more general).

\begin{cor}[Corollary 1 \cite{per01}]\label{c:pm}
Suppose that $\lambda=e^{2\pi i \theta}$ and $\theta$ is irrational.
Then the set of parameters $b$ for which $f_{\lambda, b}$ has $0$ as a
Siegel fixed point is either the entire $\Fc_\lambda$, or, otherwise,
has Hausdorff dimension $0$ $($in particular, it has empty interior$)$.
\end{cor}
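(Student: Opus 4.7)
The plan is to follow Perez-Marco's hedgehog-based approach, since this is a corollary of his general rigidity theorem for one-parameter holomorphic families of germs with an irrationally indifferent fixed point. Fix $\la=e^{2\pi i\ta}$ with $\ta$ irrational and regard $\Fc_\la$ as the complex line with coordinate $b$. Set
\[
L=\{b\in\C:0\text{ is Siegel for }f_{\la,b}\},\qquad C=\C\sm L.
\]
If $L=\C$ there is nothing to prove, so I may assume $C\ne\0$ and aim to show that $L$ has Hausdorff dimension zero.

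First, I would attach to each parameter an invariant compactum carrying rotation number $\ta$. For $b\in L$, take the closed Siegel disk $\ol{\Delta(f_{\la,b})}$. For $b\in C$, invoke Perez-Marco's hedgehog theorem to obtain a full, compact, connected, forward-invariant set $K_b\ni 0$ on which $f_{\la,b}$ acts with rotation number $\ta$. In both cases one has a well-defined logarithmic capacity $\rho(b)$, and using the holomorphic dependence of $f_{\la,b}$ on $b$ together with standard compactness arguments, one checks that $\log\rho$ is upper semicontinuous, in fact subharmonic, on $\C$.

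Next I would apply Perez-Marco's tube-log estimate locally at each $b_0\in L$. Conjugating $f_{\la,b}$ by the linearizing coordinate of $f_{\la,b_0}$ on $\Delta(f_{\la,b_0})$ produces an auxiliary holomorphic family of univalent germs fixing $0$ with multiplier $\la$. The tube-log theorem applied to this family gives the local dichotomy: either linearization persists throughout a neighborhood $V$ of $b_0$, or the intersection $L\cap V$ has vanishingly small Hausdorff $s$-content for every $s>0$. The first alternative, together with subharmonicity of $\log\rho$ and connectedness of $\Fc_\la$, would propagate linearization to all of $\C$ and force $L=\C$, contradicting $C\ne\0$. Hence the second alternative holds at every point of $L$, and a countable covering argument gives the desired Hausdorff dimension bound.

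The main obstacle, and the real content of Perez-Marco's theorem, is the tube-log estimate itself. Its proof rests on renormalization of the first-return map to an invariant fundamental tube driven by the continued fraction expansion of $\ta$, together with sharp distortion and capacity estimates under iterated conformal gluings; this is the ingredient one must import from \cite{per01}. What remains is to verify that the family $b\mapsto f_{\la,b}$ is genuinely non-degenerate so that the local dichotomy is not vacuous, and that the hedgehog construction globalizes across $\Fc_\la\cong\C$; both are immediate from the explicit formula $f_{\la,b}(z)=\la z+bz^2+z^3$.
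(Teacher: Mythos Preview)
The paper does not give its own proof of this statement: it is quoted verbatim as Corollary~1 of \cite{per01} and used as a black box. The surrounding text says explicitly that ``for brevity we state only a relevant corollary of Perez-Marco's theorem reduced to our spaces of polynomials.'' So there is no proof in the paper to compare your proposal against.

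Your proposal is an attempt to sketch Perez-Marco's own argument. You correctly identify the broad architecture---a dichotomy for one-parameter analytic families of germs with a fixed irrational multiplier, driven by a subharmonicity/potential-theoretic mechanism---and you are right that hedgehogs are Perez-Marco's construction. However, several details of your sketch do not match the actual content of \cite{per01}. The ``tube-log estimate'' and the ``renormalization of the first-return map to an invariant fundamental tube driven by the continued fraction expansion'' are not the engine of that paper; you appear to be importing machinery from Yoccoz's linearization work rather than from Perez-Marco's dichotomy paper. The quantity that is shown to be subharmonic in parameter is not the logarithmic capacity of the hedgehog but (essentially) the logarithm of the conformal radius of the linearization domain, and the exceptional set is first shown to be polar (hence of Hausdorff dimension zero) via a potential-theoretic argument, not via Hausdorff content estimates coming from renormalization. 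If you want a self-contained account, you should go back to \cite{per01} and track the actual subharmonic function used there; your current outline would not reconstruct the proof.
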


Combining these results with our tools we prove
Corollary~\ref{c:nop}.

\begin{cor}\label{c:nop}
If $\theta\notin \mathcal{B}$ is not a Brjuno number and
$\lambda=e^{2\pi i \theta}$, then the fact that $f\in \Fc_\lambda\sm
\Pc_\lambda$ implies that $0$ is a Cremer fixed point of $f$.
\end{cor}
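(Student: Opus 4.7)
The plan is a proof by contradiction using Perez-Marco's dichotomy. Assume $\theta\notin\Bc$, $\lambda=e^{2\pi i\theta}$, and $f=f_{\lambda,b_0}\in\Fc_\lambda\sm\Pc_\lambda$ has $0$ as a Siegel fixed point. Let $S_\lambda\subset\C$ denote the set of $b$ such that $f_{\lambda,b}$ has $0$ as a Siegel fixed point, so $b_0\in S_\lambda$; Corollary~\ref{c:pm} tells us that either $S_\lambda=\C$ or $\dim_H S_\lambda=0$ (so $S_\lambda$ has empty interior). The overall strategy is to show that $S_\lambda$ must contain an open neighborhood of $b_0$, thereby forcing the first alternative, and then to contradict $S_\lambda=\C$ using only that $\theta$ is not Brjuno.

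First I would establish $\lambda$-stability of $f$ and persistence of its Siegel disk in the slice. Let $\Wc$ be the component of the open set $\Fc_\lambda\sm\Pc_\lambda$ containing $f$. If $\Wc$ is bounded, then $\Wc$ is a component of $\thu(\Pc_\lambda)\sm\Pc_\lambda$, so Theorem~C yields $\lambda$-stability of $f$ directly; moreover Theorem~\ref{t:sie-quee} classifies $\Wc$ as either IS-capture or queer type, and the IS-capture case is ruled out by combining the first sentence of Theorem~C with $\thu(\Pc_\lambda)\subset\cuc$ (Theorem~\ref{t:extendclo}), so $\Wc$ is queer. If $\Wc$ is unbounded, $f$ lies outside $\thu(\Pc_\lambda)$ in a shift-locus-like region of $\Fc_\lambda$, where $\lambda$-stability is classical (MSS). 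In either case the equivariant holomorphic motion granted by $\lambda$-stability extends by the $\lambda$-lemma to a quasiconformal conjugacy of the sphere that carries $\Delta(f)$ to a Siegel disk around $0$ of every polynomial $g\in\Fc_\lambda$ sufficiently close to $f$. Hence $S_\lambda$ contains an open neighborhood of $b_0$, and Corollary~\ref{c:pm} forces $S_\lambda=\C$.

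The main obstacle is to contradict $S_\lambda=\C$ using only the non-Brjuno hypothesis, which I plan to do by exhibiting a large-$|b|$ parameter for which $f_{\lambda,b}$ has $0$ as a Cremer fixed point. The affine change of coordinate $z=w/b$ conjugates $f_{\lambda,b}$ to $g_b(w)=b\,f_{\lambda,b}(w/b)=\lambda w+w^2+w^3/b^2$, which converges uniformly on compact subsets of $\C$ to the quadratic polynomial $g_\infty(w)=\lambda w+w^2$; by Theorem~\ref{t:yoc} (Yoccoz) the map $g_\infty$ has $0$ as a Cremer fixed point because $\theta\notin\Bc$. I would then transfer non-linearizability from $g_\infty$ to $g_b$ for all sufficiently large $|b|$, using the quantitative Yoccoz inequality on Siegel radii together with the Buff--Ch\'eritat-style perturbation estimates of \cite{BuPe} (already invoked in Lemma~\ref{l:perturb1}), producing $b\notin S_\lambda$ and contradicting $S_\lambda=\C$. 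This final transfer is the delicate step of the argument, since the Siegel conformal radius is only upper semicontinuous in parameters and the passage from the quadratic Cremer limit to the cubic perturbations $g_b$ for finite but large $|b|$ requires genuinely quantitative input rather than a bare continuity statement.
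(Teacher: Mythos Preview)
Your bounded-component case is fine, but the unbounded case has a genuine gap. The set $\Fc_\lambda\sm\thu(\Pc_\lambda)$ is \emph{not} a ``shift-locus-like region'' in which $\lambda$-stability is automatic: by \cite{bopt16b} every polynomial there is immediately renormalizable around $0$, and the \emph{second} critical point can exhibit arbitrary dynamics, producing baby-Mandelbrot structure and hence plenty of $\lambda$-unstable parameters inside the unbounded component of $\Fc_\lambda\sm\Pc_\lambda$. So you cannot conclude that $S_\lambda$ contains a neighborhood of $b_0$ in this case, and the reduction to the Perez--Marco dichotomy breaks down.

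The paper handles this case differently and more directly. For $f\notin\thu(\Pc_\lambda)$ one invokes the immediate renormalization from \cite{bopt16b}: the point $0$ lies in a quadratic-like filled Julia set $K^*$, and the straightened quadratic has multiplier $\lambda=e^{2\pi i\theta}$ at the corresponding fixed point; since $\theta\notin\Bc$, Yoccoz's Theorem~\ref{t:yoc} forces that fixed point to be Cremer, hence $0$ is Cremer for $f$ as well. This already produces parameters $b$ with $b\notin S_\lambda$, so Perez--Marco gives $S_\lambda$ empty interior; the bounded components of $\thu(\Pc_\lambda)\sm\Pc_\lambda$ are then ruled out exactly as you do, via $\lambda$-stability. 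Note that this also renders your delicate step~5 unnecessary: you do not need to transfer non-linearizability from the degenerate quadratic limit $g_\infty$ to $g_b$ for large $|b|$ by perturbative estimates, because renormalization gives an honest quadratic (via straightening) at \emph{every} parameter outside $\thu(\Pc_\lambda)$, to which Yoccoz applies verbatim.
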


\begin{proof}
Suppose first that $f=f_{\lambda, b}\notin \thu({\Pc}_\lambda)$. Then, by
\cite{bopt16b}, the map is immediately renormalizable; moreover, $0$
belongs to the filled quadratic-like Julia set $K^*\subset K(f)$ of $f$.
By Theorem \ref{t:yoc}, this implies that $0$ is a Cremer point of $f$ .
By Corollary~\ref{c:pm}, it follows then that the set of
parameters $b$ for which $f_{\lambda, b}$ has $0$ as a Siegel point has
empty interior. Since, by \cite{bopt16b}, in each component of
$\thu({\Pc}_\lambda)\sm {\Pc}_\lambda$ the polynomials are conjugate,
then polynomials in those bounded domains cannot have $0$ as their
fixed Siegel point. This completes the proof.
\end{proof}

\section*{Acknowledgements}

The first author was partially supported by NSF grant DMS--1201450. The
third named author was partially supported by NSF grant DMS--1807558.
The fourth named author was partially funded within the framework of the HSE University Basic Research Program and
by the Russian Academic Excellence Project `5-100'.

\end{document}